\title[Expansion of the fundamental solution]{Expansion of the fundamental solution of a second order elliptic operator with analytic coefficients}
\author{Federico Franceschini}
\address{ETH, Rämistrasse 101, 8092 Zürich, Switzerland}
\email{federico.franceschini@math.ethz.ch}
\author{Federico Glaudo}
\address{ETH, Rämistrasse 101, 8092 Zürich, Switzerland}
\email{federico.glaudo@math.ethz.ch}
\begin{document} 
\begin{abstract}
    Let $L$ be a second-order elliptic operator with analytic coefficients defined in $B_1\subseteq\R^n$.
    We construct explicitly and canonically a fundamental solution for the operator, i.e., a function $u:B_{r_0}\to\R$ such that $Lu=\delta_0$. 
    
    As a consequence of our construction, we obtain an expansion of the fundamental solution in homogeneous terms (homogeneous polynomials divided by a power of $\abs{x}$, plus homogeneous polynomials multiplied by $\log(\abs{x})$ if the dimension $n$ is even) which improves the classical result of \cite{John1950}.
    The control we have on the \emph{complexity} of each homogeneous term is optimal and in particular, when $L$ is the Laplace-Beltrami operator of an analytic Riemannian manifold, we recover the construction of the fundamental solution due to Kodaira \cite{Kodaira49}.
    
    The main ingredients of the proof are a harmonic decomposition for singular functions and the reduction of the convergence of our construction to a nontrivial estimate on weighted paths on a graph with vertices indexed by $\Z^2$.
\end{abstract}

\maketitle
\section{Introduction}

For $n\geq 2$, consider the elliptic differential operator
\begin{equation*}
    L\defeq \sum_{1\le i,j\le n} A_{ij}\partial_{ij} + \sum_{1\le i\le n} b_i \partial_i + c,
\end{equation*}
where $A_{ij},b_i,c$ are analytic in the unit ball $B_1\subseteq \R^n$ and $A(x)$ is a positive symmetric matrix for every $x\in B_1$.

Given an open set $U\subseteq B_1$ which contains the origin, a function $u\in L^1_{loc}(U)$ is a \emph{fundamental solution} for the operator $L$ in $U$ if $Lu=\delta$ in $U$ in the distributional sense, where $\delta$ denotes the Dirac delta distribution. If $U$ is small enough, a fundamental solution always exists (for example see \cite[Sections 13.2-13.3]{2Hormander2005}).

Since $L$ is elliptic, any fundamental solution is singular only at the origin and studying this singularity is a classical problem. Let us point out that even if the fundamental solution is not unique (we did not fix the boundary conditions), the singularity at the origin is unique: if $u$ and $u'$ are fundamental solutions in $U$ and $U'$ respectively, then $u-u'$ is analytic in $U\cap U'$. In this sense the singularity of the fundamental solution is completely determined by the local behavior of $A,b,c$ at the origin.

Due to the expected nature of the singularity, it is natural to try to construct a fundamental solution $u$ with the following structure
\begin{equation}\label{eq:ansatz}
    u(x) = \frac{1}{\abs{x}^{n-2}}\sum_{\ell\ge 0}u_\ell + \log(\abs{x}) v(x),
\end{equation}
where, for all $\ell\ge 0$, $u_\ell$ is an $\ell$-homogeneous function and $v$ is a regular function. Notice that, if $L=\Delta$, then the standard fundamental solution can be expressed in the form \eqref{eq:ansatz} in every dimension $n\geq 2$.

The question now is: can we construct a legitimate fundamental solution having the form \eqref{eq:ansatz}? If so, what can one say about the functions $u_\ell$ and $v$? The work of John \cite{John1950} shows that the answer to the first question is positive and that the $u_\ell$ are analytic on the unit sphere, while $v(x)$ is real analytic.

\begin{theorem}[{{\cite[Eqs. (5.14), (5.20)]{John1950}}}]\label{thm:john}
    For $n\geq 2$, consider the elliptic differential operator
    \begin{equation*}
            L \defeq \sum_{1\le i,j\le n} A_{ij}\partial_{ij} + \sum_{1\le i\le n} b_i \partial_i + c,
    \end{equation*}
    where $A_{ij},b_i,c$ are analytic in the unit ball $B_1$ and $A(x)$ is a positive symmetric matrix for every $x\in B_1$.
    Then there is a radius $r_0=r_0(n, A, b, c)>0$ such that the following statement hold.
    
    There is an analytic function $v\colon B_{r_0}\to\R$ (which is null if the dimension $n$ is odd) and a sequence of homogeneous functions $(u_\ell)_{\ell\ge 0}\colon B_{r_0}\to\R$, $u_\ell$ being $\ell$-homogeneous, such that
    \begin{equation*}
        u(x) \defeq v(x)\log(\abs{x}) + \frac1{\abs{x}^{n-2}}\sum_{\ell\ge 0} u_\ell
    \end{equation*}
    converges absolutely in the $C^{\infty}$ topology\footnote{Notice that the function $u$ itself is not in $C^\infty$, but the differences between $u$ and the truncated sums converge to $0$ in the $C^k$-topology for any $k\ge 0$.} in $B_{r_0}$ and satisfies, in the distributional sense, $Lu=\delta$. Moreover, for each $\ell\ge0$, we have
    \begin{equation*}
        u_\ell(x) = \abs{x}^\ell \hat u_\ell\Big(\frac{x}{\abs{x}}\Big) ,
    \end{equation*}
    where each $\hat u_\ell$ is an analytic function on the unit sphere.
\end{theorem}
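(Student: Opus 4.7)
After a linear change of coordinates we may assume $A(0) = I$, so that the principal symbol of $L$ at the origin is the Euclidean Laplacian. Expanding the coefficients of $L$ in Taylor series at the origin and regrouping the resulting monomials by how much they shift homogeneity yields a decomposition
\begin{equation*}
    L = \sum_{k\ge -2} L^{(k)},
\end{equation*}
where each $L^{(k)}$ is a polynomial-coefficient operator that sends $d$-homogeneous functions to $(d+k)$-homogeneous ones; in particular $L^{(-2)} = \Delta$, and the analyticity of the coefficients translates into geometric bounds on the size of $L^{(k)}$ as $k\to\infty$.

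Insert the ansatz $u = v\log\abs{x} + \abs{x}^{-(n-2)}\sum_{\ell\ge 0} u_\ell$ into $Lu = \delta$ and expand $L(v\log\abs{x}) = (Lv)\log\abs{x} + R(v)$, where $R(v)$ is an explicit analytic-coefficient operator produced by the derivatives falling on $\log\abs{x}$. Matching terms of equal homogeneity separates the problem into the constraint $Lv = 0$ (killing the log part) and a hierarchy of spherical equations
\begin{equation*}
    \Delta\!\Bigl(\frac{u_\ell}{\abs{x}^{n-2}}\Bigr) = F_\ell,
\end{equation*}
where $F_\ell$ is $(\ell-n)$-homogeneous and built explicitly from $u_0,\dots,u_{\ell-1}$ and from $v$ (via $R(v)$). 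The level $\ell=0$ produces the Dirac mass and determines $u_0$ as the fundamental-solution constant of $\Delta$. For $\ell\ge 1$, the identity
\begin{equation*}
    \Delta\bigl(\abs{x}^{\ell-n+2}\, Y_m(\omega)\bigr) = \lambda_{\ell,m}\,\abs{x}^{\ell-n}Y_m(\omega),\qquad \lambda_{\ell,m} \defeq \ell(\ell-n+2) - m(m+n-2),
\end{equation*}
lets us invert $\Delta$ mode-by-mode on the spherical-harmonic decomposition of $F_\ell$ whenever $\lambda_{\ell,m}\ne 0$. The only obstruction is the resonance $m = \ell-n+2$, which occurs for $\ell\ge n-2$; at such a mode we absorb the corresponding degree-$m$ harmonic polynomial into the $m$-homogeneous part of $v$ via the identity $\Delta(\log\abs{x}\cdot P_m) = (2m+n-2)\,\abs{x}^{-2}P_m$ for harmonic $P_m$ of degree $m$. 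A consistency check shows that these insertions are compatible with $Lv = 0$ and feed correctly, through $R(v)$, into the forcing at the higher levels.

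For odd $n$ we show $v\equiv 0$ by parity. Each $L^{(k)}$ has parity $(-1)^k$ under $x\mapsto -x$, because any monomial $x^\alpha\partial^\beta$ appearing in $L^{(k)}$ satisfies $\abs{\alpha}-\abs{\beta} = k$ and hence $\abs{\alpha}+\abs{\beta} \equiv k \pmod 2$. Starting from $u_0$ constant (even), a straightforward induction shows that each $F_\ell$ and each $u_\ell$ consists only of spherical harmonics of parity $(-1)^\ell$. The resonant index $m = \ell-n+2$ has parity $(-1)^{\ell+n}$, which matches $(-1)^\ell$ exactly when $n$ is even; consequently, for $n$ odd the resonant component of $F_\ell$ vanishes identically, no log correction is ever generated, and $v=0$.

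The main obstacle is convergence. One must produce geometric bounds $\|\hat u_\ell\|_{C^0(S^{n-1})}\le A C^\ell$ together with analogous estimates on the Taylor coefficients of $v$, so that $v\log\abs{x} + \abs{x}^{-(n-2)}\sum_\ell u_\ell$ converges in $C^\infty(B_{r_0})$ for some $r_0>0$. This is delicate because the recursive step mixes the combinatorial proliferation of $L^{(k)}$-terms (coming from the Taylor expansions of the coefficients of $L$) with the small denominators $1/\lambda_{\ell,m}$ near the resonance line $m = \ell-n+2$: the bulk of the work goes into setting up weighted norms on the spaces of spherical-harmonic expansions and tracking carefully the accumulation of constants along the induction so that the growth stays geometric.
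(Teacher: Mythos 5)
Your sketch follows the classical Hadamard/John recursion rather than the paper's construction: you insert the ansatz directly into $Lu=\delta$, expand the forcing $F_\ell$ in spherical harmonics, invert $\Delta$ mode-by-mode using $\lambda_{\ell,m}=(\ell+m)\bigl(\ell-m-(n-2)\bigr)$, generate log corrections at the resonance $m=\ell-n+2$, and eliminate them in odd dimension by a parity count. This is a genuinely different route from the paper, which instead builds $u=\Delta^{-1}\sum_{\ell\ge 0}T^\ell\delta$ with $T=(\Delta-L)\circ\Delta^{-1}$ for a canonical right-inverse of $\Delta$ that raises homogeneity by exactly two (\cref{lem:inverse_lapl}), and then proves that the Neumann series converges through a combinatorial estimate on weighted paths in a $\Z^2$-graph (\cref{prop:crux}, \cref{lem:controlG1G2}).

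The gap is the convergence, which you defer entirely to a closing paragraph; but this is not a technicality to be filled in later, it is the whole theorem. Moreover, your description of the obstruction (``small denominators $1/\lambda_{\ell,m}$ near the resonance line'') misidentifies where the difficulty lies: away from the resonance $\abs{\lambda_{\ell,m}}\ge\ell+m$, so these denominators actually help. The real problem, emphasized in the paper's outline of the proof, is that the step operator in your recursion is \emph{unbounded} on the spherical-harmonic scale: for a harmonic polynomial $p$ of degree $k$ one has $\partial_{\nu\nu}\Delta^{-1}p\approx\tfrac{k}{4}\,p$, so a single recursion step can gain a factor growing linearly in the harmonic degree, and the operator $(\Delta-L)\circ\Delta^{-1}$ has no uniform norm bound. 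Consequently the desired estimate $\norm{\hat u_\ell}_{C^0(\partial B_1)}\le A\,C^\ell$ cannot follow from a naive term-by-term crank of the recursion; one must exhibit cancellation accumulated across many steps. The paper achieves this through the nontrivial \cref{prop:crux}, while John's original argument sidesteps the issue by invoking the Cauchy--Kowalewski theorem (at the cost of losing the explicit spherical-harmonic structure). Without a concrete substitute for either of these, the proposal does not close.
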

The construction of $u$ in \cite{John1950} relies on the Cauchy-Kowalewski theorem and on a technique going back to Hadamard (see \cite{Hadamard1904} and \cite[Section 17.4]{3Hormander2007} and \cite[Chapter III]{Shimakura92}). 
The method works for elliptic operators of any order.

In some relevant applications it is crucial to understand whether the expansion \cref{eq:ansatz} can be simplified even further. An important example is the following result of Kodaira.
\begin{theorem}[{{\cite[617]{Kodaira49}}}]\label{thm:kodaira}
    Let $(M,g)$ be an analytic Riemannian manifold of dimension $n\geq 2$ and let $\lapl_g$ be the Laplace-Beltrami operator. Let $u$ be defined around $x_0\in M$ and satisfy $\lapl_gu=\delta_{x_0}$. Then, in normal coordinates around $x_0$, we have
    \begin{equation*}
        u(x)=\frac{\hat u(x)}{\abs{x}^{n-2}}+v(x)\log(\abs{x})
    \end{equation*}
    where $\hat u, v$ are analytic functions ($v$ is null if the dimension $n$ is odd).
\end{theorem}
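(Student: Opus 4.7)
The plan is to work in normal coordinates around $x_0$ and substitute the ansatz $u(x) = \hat u(x)/\abs{x}^{n-2} + v(x) \log\abs{x}$ directly into $\Delta_g u = \delta_{x_0}$, where $\hat u, v$ are to be determined as analytic functions (with $v \equiv 0$ when $n$ is odd). Two properties of normal coordinates are decisive: the Gauss-lemma identity $g^{ij}(x)\, x_j = x^i$, and the formula $\Delta_g r = (n-1)/r + \partial_r \log \theta$ with $r = \abs{x}$ and $\theta = \sqrt{\det g}$ analytic, $\theta(0) = 1$. A direct computation then gives $\Delta_g(r^{-(n-2)}) = -(n-2)\, r^{-(n-1)}\, \partial_r \log \theta$ away from the origin (plus a Dirac mass at $0$ with the standard Euclidean coefficient).

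Applying the Leibniz rule to $\Delta_g(\hat u/r^{n-2})$ and $\Delta_g(v\log r)$, the most singular $r^{-n}$ terms cancel thanks to the Gauss lemma, and after clearing denominators the equation $\Delta_g u = 0$ away from $0$ reads
\begin{equation*}
    \bigl(r^2 \Delta_g - 2(n-2)\, x \cdot \nabla\bigr) \hat u - (n-2)\, \hat u\, (x \cdot \nabla \log \theta) = (\text{explicit terms involving } v),
\end{equation*}
supplemented by $\Delta_g v = 0$ coming from the $\log r$-coefficient. Matching the Dirac mass fixes the initial value $\hat u(0)$ to the standard Euclidean normalization. Writing $\hat u = \sum_k A_k$ and $v = \sum_k B_k$ as sums of homogeneous polynomials and projecting at degree $k$, the main equation reduces (at low orders, where the $v$-contribution vanishes) to
\begin{equation*}
    \bigl(r^2 \Delta - 2(n-2) k\bigr) A_k = (\text{terms involving } A_j \text{ for } j < k),
\end{equation*}
where $\Delta$ is the Euclidean Laplacian.

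Using the harmonic decomposition $A_k = \sum_j r^{2j} H_{k-2j}$ with $H_m$ a harmonic polynomial of degree $m$, the operator $r^2 \Delta - 2(n-2) k$ is diagonal with eigenvalue $2(k-j)(2j - n + 2)$ on $r^{2j} H_{k-2j}$. For $n$ odd these eigenvalues are nonzero for every $k \ge 1$, so the recursion closes with $v \equiv 0$. For $n$ even a one-dimensional kernel (along $j = (n-2)/2$) appears starting at $k = n-2$, producing an obstruction; this is precisely what the $v \log r$ term is designed to absorb, with the harmonic piece of $B_{k-(n-2)}$ determined by projecting the obstruction onto the kernel, while the remaining Taylor coefficients of $v$ are fixed by $\Delta_g v = 0$.

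The main obstacle is proving that these formal power series actually converge on a common neighborhood of $0$. Each $A_k$ is obtained from lower-order coefficients through a linear recursion involving the Taylor coefficients of $g^{ij}, \theta$ and inverses of the nonzero eigenvalues of $r^2\Delta - 2(n-2)k$. Since those eigenvalues grow linearly in $k$ (and are uniformly bounded away from $0$ once we remove the kernel direction), a standard method of majorants---replacing $g^{ij}, \theta$ by an explicit analytic majorant---should yield the required geometric decay for $A_k$ and $B_k$. Alternatively, one can interpret the recursion as a family of holomorphic ODEs along radial rays depending analytically on the angular variable and invoke a Cauchy-Kowalewski-type argument. Either way, the radial structure of normal coordinates keeps the combinatorics much simpler than in the general setting of Theorem~\ref{thm:john}.
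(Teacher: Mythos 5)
Your approach is correct but genuinely different from the paper's. The paper obtains Kodaira's theorem as a direct corollary of \cref{thm:precisedenominators}: in normal coordinates the Gauss lemma gives $g^{ij}(x)x_ix_j = |x|^2$, hence the obstruction function $\alpha(x) = \sum_{ij}(\delta_{ij}-g^{ij}(x))x_ix_j \equiv 0$, which forces $\lambda = \infty$ and collapses the denominators $|x|^{\lfloor 2\ell/(\lambda-2)\rfloor}$ to $1$. You instead run a direct Hadamard/Cauchy--Kowalewski construction in normal coordinates, substituting the ansatz into $\Delta_g u = 0$ away from the origin and solving the resulting triangular recursion. Your algebra checks out: the identity $r^n\Delta_g(\hat u\, r^{-(n-2)}) = r^2\Delta_g\hat u - 2(n-2)(x\cdot\nabla\hat u) - (n-2)(x\cdot\nabla\log\theta)\hat u$ follows from the two normal-coordinate facts you cite, the $\log r$-coefficient does isolate $\Delta_g v = 0$, and the eigenvalue of $r^2\Delta - 2(n-2)k$ on $r^{2j}H_{k-2j}$ is indeed $2(k-j)(2j-n+2)$, whose only zero with $0\le j \le k/2$, $k\ge 1$ occurs at $j=(n-2)/2$, i.e.\ only when $n$ is even. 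This is essentially the route of Kodaira himself and of \cite{KhenissyRebaiYe2010,Lukasiewicz09}.

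Two points deserve a warning. First, a minor imprecision: the kernel of $r^2\Delta - 2(n-2)k$ on degree-$k$ polynomials is $r^{n-2}\H^{k-(n-2)}$, whose dimension grows with $k$; it is "one-dimensional" only in the sense that a single value of $j$ contributes. This does not break the argument, since the free harmonic part of $B_{k-(n-2)}$ has exactly the matching dimension, but you should say it correctly. Second, and more importantly, convergence is where the real work lives, and your two sentences only gesture at it. The saving grace here is precisely $\alpha\equiv 0$: because of it, the nonzero eigenvalues $2(k-j)(2j-n+2)$ all have absolute value $\ge 2$, so the solution operator on the complement of the kernel is uniformly bounded in $k$, and a majorant argument can be made to close in the classical way. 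This is exactly what fails in the general setting of \cref{thm:main}, where the analogue of your solution operator has norm growing linearly in $k$, and it is the reason the paper needs the weighted-path combinatorics of \cref{prop:crux} rather than a majorant scheme. So your route buys a self-contained, elementary proof of \cref{thm:kodaira}, at the price of not generalizing; the paper's route buys \cref{thm:kodaira} as a two-line corollary of a much more general theorem, at the price of invoking the full machinery. Both are legitimate, but you should actually carry out the majorant estimate rather than defer it: it is the only nontrivial step.
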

Using the notation of \cref{eq:ansatz}, Kodaira's result is equivalent to say that the homogeneous functions $u_\ell$ are polynomials, provided the operator $L$ is the Laplace-Beltrami operator of some Riemannian manifold.

The main results of the present paper unify and improve John's result and Kodaira's result.
We improve \cref{thm:john} by giving a sharper (and generally optimal) description of the expansion of $u$. Namely, we show (in \cref{thm:main}) that the terms $\hat u_\ell$ appearing in the statement of \cref{thm:john} are homogeneous polynomials of degree $3\ell$ (up to a linear change of coordinates), instead of general analytic functions. 
Furthermore, we identify (in \cref{thm:precisedenominators}) the elliptic operators $L$ such that the fundamental solution has an even simpler expression and as a byproduct we recover \cref{thm:kodaira}.

Our construction of the fundamental solution is \emph{simple}, \emph{explicit} and \emph{canonical} (see \cref{rmk:algebraic,sec:uniqueness} and the outline of the proof given at the end of this introduction).

\begin{theorem}\label{thm:main}
    For $n\geq 2$, consider the elliptic differential operator
    \begin{equation*}
            L \defeq \sum_{1\le i,j\le n} A_{ij}\partial_{ij} + \sum_{1\le i\le n} b_i \partial_i + c,
    \end{equation*}
    where $A_{ij},b_i,c$ are analytic in the unit ball $B_1$ and $A(x)$ is a positive symmetric matrix for every $x\in B_1$. Let $Q\defeq \sqrt{A(0_{\R^n})}$ (i.e., $Q$ is a positive symmetric matrix such that $Q^2 = A(0_{\R^n})$).
    There is a radius $r_0=r_0(n, A, b, c)>0$ such that the following statements hold.
    
    There is an analytic function $v:Q^{-1}(B_{r_0})\to\R$ (which is null if the dimension $n$ is odd) and a sequence of homogeneous functions $(u_\ell)_{\ell\ge 0}:B_{r_0}\to\R$, $u_\ell$ being $\ell$-homogeneous, such that
    \begin{equation}\label{eq:main_expansion}
        u(Qx) \defeq v(x)\log(\abs{x}) + \frac1{\abs{x}^{n-2}}\sum_{\ell\ge 0} u_\ell
    \end{equation}
    converges absolutely in the $C^\infty$ topology in $Q^{-1}(B_{r_0})$ and $u$ satisfies, in the distributional sense, $Lu=\delta$ in $B_{r_0}$. 
    Moreover, for each $\ell\ge0$, we have
    \begin{equation}\label{eq:formulaforuell}
        u_\ell(x) = \frac{p_\ell(x)}{\abs{x}^{2\ell}},
    \end{equation}
    where $p_\ell$ is a $(3\ell)$-homogeneous polynomial.
\end{theorem}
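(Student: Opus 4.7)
The plan is to first reduce to the case $A(0_{\R^n}) = I$ by performing the linear change of variables $y = Qx$, after which the principal part of $L$ at the origin becomes the flat Laplacian. Writing the transformed operator as $L = \Delta + R$, where the second-order coefficients of $R$ vanish at the origin and all coefficients of $R$ are analytic, I would plug the ansatz \cref{eq:main_expansion} into $Lu = \delta$ and match $\ell$-homogeneous components. The leading order gives $u_0$ equal to the Newtonian normalization constant, and for $\ell \geq 1$ one obtains, schematically,
\[
\Delta\!\left(\frac{u_\ell}{\abs{x}^{n-2}}\right) = F_\ell[u_0,\dots,u_{\ell-1}],
\]
where $F_\ell$ collects the $\ell$-homogeneous contribution of $R$ applied to the previously constructed terms. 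In even dimension the extra term $v\log\abs{x}$ must be carried along to absorb the logarithmic residues that otherwise obstruct the recursion.

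For each $\ell$, I would solve this equation by decomposing $F_\ell$ into spherical harmonics and explicitly inverting the operator $f\mapsto\Delta(\abs{x}^{-(n-2)} f)$ on each eigenspace of the spherical Laplacian: this is the \emph{harmonic decomposition for singular functions} alluded to in the abstract, and it reduces the PDE on each mode to an elementary radial ODE. The key inductive claim is that if each $u_j$ with $j<\ell$ has the form $p_j/\abs{x}^{2j}$ with $p_j$ a $(3j)$-homogeneous polynomial, then so does $u_\ell$. The factor $3$ encodes a balance between two effects in the recursion: multiplying by a Taylor coefficient of $R$ (which vanishes at the origin, composed with at most two derivatives) raises the numerator degree, while the radial inversion shifts it in the opposite direction; a careful bookkeeping of these contributions along with the preservation of polynomial structure under the harmonic decomposition should give $\deg p_\ell \le 3\ell$.

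The remaining and most delicate task is to prove that the formal series $\sum_\ell p_\ell(x)/\abs{x}^{2\ell}$ converges in $C^\infty$ on a ball $B_{r_0}$, for which one needs analytic-type bounds of the shape $\|p_\ell\|_\infty \lesssim C^\ell$ on the unit sphere. Following the hint in the abstract, I would express $p_\ell$ as a sum over paths of length $\approx \ell$ in a graph whose vertices are indexed by $\Z^2$ (each vertex encoding the integer data attached to a summand—e.g.\ the current homogeneity of the factor and the polynomial degree remaining, or the number of inversions and perturbation steps applied so far), with edge weights derived from bounds on the Taylor coefficients of $R$ and from norm bounds for the inverse of $\Delta(\abs{x}^{-(n-2)}\,\cdot)$ on spherical harmonic eigenspaces. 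The convergence of the series then reduces to a uniform estimate on the total weighted contribution of all such paths. Establishing this weighted path estimate with sharp enough constants, uniformly in $\ell$, is the main analytical obstacle and will require the nontrivial combinatorial argument on $\Z^2$ promised in the abstract.
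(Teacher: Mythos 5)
Your high-level plan coincides with the paper's: reduce to $A(0_{\R^n})=\mathrm{id}$ via the linear substitution $y=Qx$, view $L=\lapl+R$ as a perturbation of the flat Laplacian, construct the fundamental solution by inverting the perturbation order by order in homogeneity, track the algebraic structure via a decomposition into harmonic polynomials weighted by powers of $\abs{x}$ (and $\log\abs{x}$), and encode the convergence as a bound on weighted paths in a graph on $\Z^2$. The paper organizes the recursion as a Neumann series $u=\lapl^{-1}\sum_{\ell\ge 0}T^\ell\delta$ with $T\defeq(\lapl-L)\circ\lapl^{-1}$ rather than solving a level-by-level equation $\lapl(u_\ell\abs{x}^{-(n-2)})=F_\ell$ as you propose, but these are equivalent bookkeeping choices: grouping the Neumann series by homogeneity reproduces your recursion.

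The genuine gap is that you do not prove, or even sketch, the combinatorial path estimate that is the entire content of the convergence argument, and you do not identify \emph{why} it is difficult. The crucial obstruction, which the paper highlights in its outline, is that $T$ is \emph{unbounded} between adjacent spaces $\H^{k,h}$: the inverse Laplacian gains a factor comparable to $1/(kh)$, two derivatives cost a factor comparable to $k^2$ (or $h^2$), and multiplication by a coefficient that vanishes at the origin costs only $O(1)$, so the net weight on an edge $(k,h)\to(k-2,h+2)$ grows like $\jap{\max\{k,\abs{h}\}}/\jap{h}$, which is unbounded in $k$. A naive geometric-series bound therefore fails. The paper's key observation (its Proposition on path weights, \cref{prop:crux}) is a cancellation: the large-weight ``red'' arrows that raise harmonic degree are necessarily interleaved with small-weight ``blue'' arrows that lower it, because the harmonic degree $k$ is bounded below by $0$; pairing each red arrow with its succeeding blue arrow makes the products bounded. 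This cancellation, not merely ``sharp enough constants'', is the heart of the matter and must be exhibited. Two secondary omissions: (i) the $\Z^2$ vertices must be exactly the pairs $(k,h)$ indexing the spaces $\H^{k,h}$ (degree of the harmonic factor, power of $\abs{x}$) — your hedging between several encodings matters, because only this choice produces the red/blue structure that cancels; (ii) you omit the rescaling $x\mapsto rx$ needed to shrink the Taylor coefficients of $R$ uniformly, which is what provides the small parameter $\eps$ on which the geometric-type summation ultimately hinges.
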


\begin{remark}\label{rmk:algebraic}
    The polynomials $p_\ell$ and the analytic function $v$ that appear in the statement of \cref{thm:main} depend continuously on the coefficients of the Taylor expansions of $A,b,c$ at the origin. More precisely, the coefficients of the polynomials $p_\ell$ and of the $\ell$-homogeneous part of $v$ are explicit polynomial functions of the entries of $Q$ and of the first $O(\ell)$ coefficients of the Taylor expansions of $A, b, c$.
    
    In order to double-check our formulas and to showcase how explicit our construction is, we implemented a \texttt{C++} program \cite{ComputingFundSol} that, given in input the operator $L$, produces the fundamental solution following our construction.
\end{remark}

In the next Theorem we give a precise description of the simplifications which occur in \cref{eq:formulaforuell} if $p_\ell$ is divisible by $\abs{x}^2$.
We discover that the growth of the complexity of the homogeneous terms $u_\ell$ (which is encoded by the degree of the polynomials $p_\ell$ after the factors $\abs{x}^2$ are simplified) is determined by the Taylor expansion of the principal symbol of $\lapl-L$ evaluated on $x=\xi$ (that is the function $\alpha$ in the statement below).
For clarity we decided to state this result only for operators with $A(0_{\R^n})=\id$; this assumption can always be obtained with a linear change of coordinates.
\begin{theorem}\label{thm:precisedenominators}
    Under the same assumptions of \cref{thm:main}, with the additional assumption $A(0_{\R^n})=\id$; let us consider the analytic function 
    \begin{equation*}
        \alpha(x)\defeq \sum_{i,j}(\delta_{ij}-A_{ij}(x))x_ix_j
    \end{equation*}
    and let $\lambda\ge3$ be the smallest natural number such that the $\lambda$-homogeneous part of $\alpha$ is not divisible by $\abs{x}^2$ ($\lambda=\infty$ if $\alpha$ is divisible by $\abs{x}^2$).
    Then, with reference to the statement of \cref{thm:main}, for each $\ell\ge0$, there is a homogeneous polynomial $\tilde p_\ell$ such that $p_\ell = \abs{x}^{2\ell-\lfloor\frac{2\ell}{\lambda-2}\rfloor}\tilde p_\ell$, so that $u_\ell$ can be written as
    \begin{equation*}
        u_\ell(x) = \frac{\tilde p_\ell(x)}{\abs{x}^{\lfloor\frac{2\ell}{\lambda-2}\rfloor}}.
    \end{equation*}
    Moreover, when $\lambda<\infty$, if $\ell$ is divisible by $\lambda-2$ then  $\tilde p_\ell$ is not divisible by $\abs{x}^2$ and therefore no further simplification between the numerator $\tilde p_\ell(x)$ and the denominator $\abs{x}^{\lfloor\frac{2\ell}{\lambda-2}\rfloor}$ is possible.
\end{theorem}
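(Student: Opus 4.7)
The plan is to extract the linear recursion defining $p_\ell$ from the construction of \cref{thm:main} and to track the divisibility of $p_\ell$ by powers of $\abs{x}^2$ through this recursion. Writing $u_\ell = p_\ell/\abs{x}^{2\ell}$ and matching terms of homogeneity $(\ell-n)$ in the equation $Lu=\delta$ with the ansatz \cref{eq:main_expansion} yields
\[
    \Delta\!\left(\frac{p_\ell}{\abs{x}^{2\ell + n - 2}}\right) = -\sum_{k=1}^{\ell} (L-\Delta)^{(k)}\!\left(\frac{p_{\ell-k}}{\abs{x}^{2(\ell-k)+n-2}}\right),
\]
where $(L-\Delta)^{(k)}$ denotes the net-homogeneity-$(k-2)$ piece of the operator $L-\Delta$: explicitly, it combines the second-order $k$-homogeneous coefficients of $A-\id$, the first-order $(k-1)$-homogeneous coefficients of $b$, and the zeroth-order $(k-2)$-homogeneous coefficients of $c$. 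Inverting the spherical-Laplacian-type operator on the left (absorbing its kernel into the $\log$-term $v$ in even dimensions) produces an explicit recursion expressing $p_\ell$ in terms of $p_0, \ldots, p_{\ell-1}$ and the Taylor coefficients of $A,b,c$.

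The key identity underlying the proof is
\[
    \sum_{ij} A_{ij}^{(k)}(x)\,\partial_{ij}\abs{x}^{-m} = -m\,\mathrm{tr}\!\bigl(A^{(k)}\bigr)\abs{x}^{-m-2} - m(m+2)\,\alpha^{(k+2)}(x)\,\abs{x}^{-m-4},
\]
together with its analogue for polynomial-times-radial inputs. It isolates the only mechanism in the recursion capable of producing a \emph{new} factor of $\abs{x}^{-2}$ beyond what $\Delta$ itself produces: the term proportional to $\alpha^{(k+2)}(x)\abs{x}^{-m-4}$. When $\alpha^{(k+2)}$ is divisible by $\abs{x}^2$, this singular contribution simplifies and the recursion step preserves $\abs{x}^2$-divisibility; otherwise it may cost one power of $\abs{x}^2$. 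By the definition of $\lambda$, costly steps are exactly those with $k\ge\lambda-2$.

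The divisibility $\abs{x}^{2\ell-\lfloor 2\ell/(\lambda-2)\rfloor}\mid p_\ell$ will then follow by strong induction on $\ell$. Along any chain $\ell=\ell_0>\ell_1>\cdots>\ell_r=0$ contributing recursively to $p_\ell$, every costly step has gap $\ell_{i-1}-\ell_i\ge\lambda-2$ and costs at most one power of $\abs{x}^2$, so the total cost along the chain is at most $\lfloor 2\ell/(\lambda-2)\rfloor$ (the factor $2$ reflecting the substitution $u_\ell=p_\ell/\abs{x}^{2\ell}$). For the sharpness assertion when $(\lambda-2)\mid\ell$, the strategy is to iterate the worst-case recursion step $m=\ell/(\lambda-2)$ times and to identify a leading contribution to $\tilde p_\ell$ proportional to $\bigl(\alpha^{(\lambda)}(x)\bigr)^m$ (of degree $m\lambda$, matching the predicted degree of $\tilde p_\ell$); since by hypothesis $\alpha^{(\lambda)}$ is not divisible by $\abs{x}^2$, neither is its $m$-th power, forbidding any further simplification.

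The main obstacles I anticipate are twofold. First, turning the heuristic radial/tangential decomposition above into a clean algebraic statement applicable to arbitrary polynomial inputs (not just pure radial powers $\abs{x}^{-m}$); this is most naturally carried out in terms of the harmonic decomposition for singular functions already used elsewhere in the paper. Second, for the sharpness claim, verifying that the scalar coefficient multiplying $\bigl(\alpha^{(\lambda)}\bigr)^m$ in the iterated contribution to $\tilde p_\ell$ is nonzero, which amounts to ruling out accidental cancellations among other branches of the recursion. The lower-order contributions arising from $b$ and $c$ produce strictly milder singularities and hence cannot spoil the count.
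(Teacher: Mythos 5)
Your proposal is essentially the same approach as the paper, just phrased as a recursion on the homogeneous parts $p_\ell$ rather than in the operator language $T=(\lapl-L)\circ\lapl^{-1}$ acting on $\H^{\bullet,\bullet}$. Your key identity
\[
    \sum_{ij}A_{ij}^{(k)}\partial_{ij}\abs{x}^{-m}=-m\,\mathrm{tr}(A^{(k)})\abs{x}^{-m-2}-m(m+2)\,\alpha^{(k+2)}\abs{x}^{-m-4}
\]
is exactly (the scalar avatar of) the paper's \cref{lem:lowest_arrow}, which gives $\pi_{k',h-2}(Tf)=\frac{h\abs{x}^{h-2}}{2k+h+n}\,\Pi_{k',0}(\alpha\cdot p)$ for $f=p\abs{x}^h$; this lemma is precisely the mechanism by which $\alpha$'s divisibility by $\abs{x}^2$ controls the singularity. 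The induction for the upper bound is also the same in substance: the paper replaces $\Sigma_\ell$ by $\widetilde\Sigma_\ell$ (adding the constraint $2x+\lambda y\ge 0$), and checks that the support of $T^\ell\delta$ stays in $(0,-n)+\widetilde\Sigma_\ell$. Your count of costly steps (at most $\ell/(\lambda-2)$ of them, each costing two powers of $\abs{x}$) is a path-by-path reading of that constraint.

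The one place where your proposal is genuinely vaguer than the paper — and where the gap you flag is real — is the sharpness assertion. Two refinements are needed. First, the extremal contribution is not literally proportional to $(\alpha^{(\lambda)})^m$ but to the iterated harmonic projection $\Pi_{m\lambda,0}\bigl(\alpha^{(\lambda)}\cdot\Pi_{(m-1)\lambda,0}(\alpha^{(\lambda)}\cdots)\bigr)$; what is true, and what you actually need, is that this agrees with $(\alpha^{(\lambda)})^m$ modulo $\abs{x}^2$, and its nonvanishing reduces (via the primality of $\abs{x}^2$ in $\R[x]$, even for $n=2$ using the factorization over $\C$) to $\abs{x}^2\nmid\alpha^{(\lambda)}$. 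Second, the "accidental cancellations" you worry about are ruled out precisely by the harmonic decomposition: the extremal datum is isolated as a single component $\pi_{\lambda\ell,-n-2\ell}(T^\ell\delta)$, and the support constraint $\widetilde\Sigma_1$ shows there is a \emph{unique} length-$\ell$ path from $(0,-n)$ to $(\lambda\ell,-n-2\ell)$ (every step must be $(\lambda,-2)$), so no two branches can cancel. Once you carry out your plan in the $\H^{k,h}$ framework — as you say you intend to — both points resolve, and the resulting argument is the paper's proof.
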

\begin{remark}
    When the function $\alpha$ is divisible by $\abs{x}^2$, \cref{thm:precisedenominators} guarantees that the fundamental solution can be written as
    \begin{equation*}
        u(x) = p(x)\abs{x}^{-(n-2)} + v(x)\log(\abs{x}) ,
    \end{equation*}
    where $p,v$ are analytic functions and $v\equiv 0$ if the dimension $n$ is odd. This recovers \cite[Theorems 2.6, 3.3]{KhenissyRebaiYe2010} (in the paper it is considered only the case $A\equiv\id$). Moreover, it provides an alternative proof of \Cref{thm:kodaira}. Indeed, if $(x_1,x_2,\dots, x_n):U\subseteq M\to\R^n$ is a normal chart for an analytic Riemannian manifold $(M, g)$; the Laplace-Beltrami operator of a smooth function $\varphi\in C^{\infty}(M)$ can be written as (using the Einstein notation for repeated indexes)
    \begin{equation*}
        \lapl_g\varphi = \div(\nabla \varphi) = \frac{1}{\sqrt{\det g}} \partial_i\big(\sqrt{\det g} g^{ij}\partial_{j}\varphi\big) = g^{ij}\partial_{ij}\varphi + \text{lower order terms} ,
    \end{equation*}
    and the matrix $g^{ij}$ satisfies (since we are using a normal chart) $g^{ij}(x)x_ix_j = \abs{x}^2$ which is equivalent to $\alpha\equiv 0$ (for the operator $L$ given by the Laplace-Beltrami operator in this chart).
    
    Finally, also \cite[Theorem 3.1]{Lukasiewicz09} follows from \cref{thm:precisedenominators} exactly as Kodaira's result. Indeed \cite[Theorem 3.1]{Lukasiewicz09} is equivalent to Kodaira up to the presence of lower-order terms in the operators, but for \cref{thm:precisedenominators} the coefficients $b_i$ and $c$ in the operator $L=A_{ij}\partial_{ij}+b_i\partial_i+c$ do not play any role (the value of $\lambda$ depends only on the matrix $A_{ij}$). 
\end{remark}
\begin{remark}
    As a consequence of \cref{thm:precisedenominators}, we see that the denominator $\abs{x}^{2\ell+n-2}$ for the $(\ell-(n-2))$-homogeneous part of the fundamental solution is sharp in general (or equivalently that the degree $3\ell$ for $p_\ell$ cannot be reduced due to unexpected simplifications). 
    Indeed if the operator $L$ is such that $\lambda=3$ then, for any $\ell\ge0$, $p_\ell$ is not divisible by $\abs{x}^2$. Let us emphasize that this is the behavior for a generic choice of the operator $L$ (in the sense of Baire) and, in particular, it is the case for $L\defeq \lapl + x_1\partial_{11}$.
    More precisely, the statement would be false if we were to require that $u_\ell$ was given by a polynomial divided by $\abs{x}^{2\ell+(n-2)-2}$ (even for just one value of $\ell\ge 0$).
\end{remark}

\subsection{Outline of the proof}
Our proof of \cref{thm:main} follows a classical scheme and, at the same time, is essentially different from the approaches used in the past for the problem. 
While the construction of the fundamental solution is very natural (i.e., we employ the standard method to invert the perturbation of an invertible operator), the proof of the convergence requires novel techniques.
In this section we showcase the key points of our work.

For simplicity, let us assume that $L\defeq \lapl + x_1\partial_{11}$ and that the dimension $n$ of the ambient is odd, so that we do not have to deal with logarithms.
Let $T\defeq (\lapl-L)\circ \lapl^{-1} = x_1\partial_{11}\circ \lapl^{-1}$ (notice that our definition of $T$ is ambiguous, since the inverse of the Laplacian is not well-defined).
One can check that, at least formally, it holds
\begin{equation*}
    L\Big(\lapl^{-1}\big(\delta + T\delta + T^2\delta + \cdots + T^{\ell-1}\delta\big)\Big) = \delta - T^{\ell}\delta .
\end{equation*}
Hence, if we were able to prove that $T^{\ell}\delta\to0$ in a strong norm as $\ell\to\infty$, we would have successfully built the fundamental solution
\begin{equation}\label{eq:intro_fund}
    u(x) \defeq \lapl^{-1}\Big(\sum_{\ell\ge 0} T^\ell\delta\Big) .
\end{equation}
Let us devote some time to understand what is the correct $\lapl^{-1}$ to consider.
If one chooses the inverse Laplacian $\lapl_0^{-1}$ induced by null Dirichlet boundary conditions on a sufficiently small sphere $\partial B_{r_0}$, it is not hard to check that the map $T$ is a contraction (in $L^2(B_{r_0})$) and thus the desired convergence holds. On the other hand, we are not able to control sufficiently the algebraic structure the function $T^\ell\delta$ to deduce the fine properties of the fundamental solution we are looking for\footnote{ The main issue is that the homogeneity of $T^\ell\delta$ does not increase as $\ell\to\infty$. This makes it very hard to study the homogeneous parts of the fundamental solution \cref{eq:intro_fund} as required to prove \cref{thm:main}. Furthermore, even in odd dimension, $T^\ell\delta$ may contain an analytic part (which is not admitted in the statement).}.

The expansion we desire to show for the fundamental solution is \emph{asymptotic with respect to homogeneity}, hence we seek a $\lapl^{-1}$ operator which behaves as dimensional analysis suggests, namely $\lapl^{-1}\varphi$ shall be $(\ell+2)$-homogeneous, whenever $\varphi$ is $\ell$-homogeneous. We construct $\lapl^{-1}$ so that, given a $k$-homogeneous polynomial $p$ and an odd integer $k\in\Z$, $\lapl^{-1}(p\abs{x}^h)$ has the form $q\abs{x}^{h-2}$ where $q$ is another $k$-homogeneous polynomial.
With this choice of the inverse Laplacian one can check that, at least on a formal level, the fundamental solution \cref{eq:intro_fund} has the structure described by \cref{thm:main}. 
On the other hand, the convergence of \cref{eq:intro_fund} is unexpected since there are no \emph{analytic corrections} when one uses this $\lapl^{-1}$.
In fact, it requires hard work to prove the convergence $T^\ell\delta\to0$ as $\ell\to\infty$ because the operator $T$ is not bounded, as shown by the following counterexample. Let $p$ be a homogeneous harmonic polynomial of degree $k\gg 1$. If $\nu={x}/{\abs{x}}$, we have the formula (we are using Euler's formula for the radial derivative of homogeneous functions and \cref{lem:laplacianoperator})
\begin{equation*}
    x_1\partial_{\nu\nu} \lapl^{-1}p = x_1\frac{(k+2)(k+1)}{4k+2n}p \approx x_1\frac{k}4 p.
\end{equation*}
Since there is no reason to expect that $\partial_{11}$ behaves much better than $\partial_{\nu\nu}$ and, for any reasonable norm, $\norm{x_1p}$ is comparable to $\norm{p}$, we deduce that the operator $T$ is not bounded as $k\to\infty$.

The core of our work is establishing the convergence of $T^\ell\delta\to0$ as $\ell\to\infty$.
To do so, we exploit a decomposition in harmonic components which is tailored to our needs: for each $k\ge 0$ and $h>-k-n$, we consider the space $\H^{k,h}$ of functions given by $p\abs{x}^h$ where $p$ is a $k$-homogeneous harmonic polynomial and we consider (countable) sums of elements of these spaces.
Through a precise study of the action of the map $T$ on the spaces $\H^{k,h}$ we reduce the convergence problem to estimating the weight of a path on an infinite directed weighted graph whose vertices are parametrized by $\Z^2$ (the vertex $(k,h)\in\Z^2$ represent the $\H^{k,h}$; the weights of the edges correspond to the norm of $T$ between the respective spaces), which is then established through a neat combinatorial argument (see \cref{prop:crux}).

\subsection{On the covariance of the construction}\label{sec:uniqueness}
In odd dimension the fundamental solution we construct is unique in a suitable sense (see \cref{eq:structural_property}) and it is coordinate invariant, while in even dimension these properties do not hold.
These observations require an argument since the construction is not patently coordinate invariant.

For notational convenience, we assume everywhere that $A(0_{\R^n}) = \id$ (so that the matrix $Q$ appearing in \cref{thm:main} is the identity).

Let us first consider the case $n$ odd. 
As a direct consequence of \cref{thm:main}, we observe that the solution to $Lu=\delta$ we construct satisfies
\begin{equation}\label{eq:structural_property}
    u(x) = \abs{x}U(x, \abs{x}^{-2}) ,
\end{equation}
where $U(\xi,q)$ is a power series\footnote{We are skipping some convergence technicalities in order not to hide the point of this section. Namely one should say that the series which defines $\hat U(x, \abs{x}^{-2})$ converges absolutely when ordered by homogeneity (with respect to $\abs{x}$) in a small neighborhood of the origin. Then, when proving the covariance of our construction, one should check the validity of this convergence.} in the $(\xi,q)$ variables containing only terms $\xi^\alpha q^j$ for $\alpha_1+\alpha_2+\cdots+\alpha_n+j\ge -(n-1)$, so that the minimum homogeneity of $u(x)$ is $-(n-2)$. Let us show that this condition uniquely identifies the fundamental solution $u$ we construct.
Take another function $\hat u$ so that $L\hat u =\delta$ and
\begin{equation*}
    \hat u(x) = \abs{x}\hat U(x, \abs{x}^{-2}) ,
\end{equation*}
where $\hat U$ is a suitable power series. Then, the difference of the two functions satisfies $L(u-\hat u) = 0$ and therefore, by elliptic regularity, it is analytic. If, by contradiction, $u\not=\hat u$, then $\abs{x}(U-\hat U)(x, \abs{x}^{-2})$ would be analytic and nonzero, which is impossible since $\abs{x}$ is not analytic. More precisely, if $u\not=\hat u$ then there exists $\ell\in\Z$ so that $u(x)-\hat u(x) = \abs{x}p_\ell(x)+ \bigo(\abs{x}^{\ell+2})$ where $p_\ell$ is a nonzero $\ell$-homogeneous rational function. Hence, if $u(x)-\hat u(x)$ is smooth then $\abs{x}p_\ell(x)$ is a polynomial, which is impossible and shows the contradiction.

From this uniqueness result, one may also deduce that the fundamental solution we construct behaves nicely under change of coordinates.
Let $\Phi=(\Phi_1, \Phi_2\dots, \Phi_n)\colon\Omega\to\widetilde \Omega$ be an analytic diffeomorphism between two neighborhoods of the origin such that $\Phi(x) = x + \bigo(\abs{x}^2)$.
Let $\tilde L$ be the operator obtained from $L$ by changing the coordinates according to $\Phi$, namely $\tilde L$ is the unique operator such that $(\tilde L\eta)\circ \Phi = L(\eta\circ \Phi)$ for any smooth function $\eta$.
The operator $L$ can be written as $L= A_{ij}(x)\partial_{ij} + b_i(x)\partial_i + c(x)$ and one can check through explicit computations that $\tilde L = \tilde A_{ij}(y)\partial_{ij} + \tilde b_i(y)\partial_i + \tilde c(y)$, where (using the Einstein notation for repeated indices)
\begin{align}
    \tilde A_{ij}(\Phi(x)) &\defeq A_{i'j'}(x)\partial_{i'}\Phi_i(x)\partial_{j'}\Phi_j(x),
    \label{eq:coordinates_changeA}\\
    \tilde b_{i}(\Phi(x)) &\defeq A_{i'j'}(x)\partial_{i'j'}\Phi_i(x) + b_{i'}(x)\partial_{i'}\Phi_i(x), \label{eq:coordinates_changeb}\\
    \tilde c(\Phi(x)) &\defeq c(x) \label{eq:coordinates_changec}.
\end{align}
Notice that $\tilde A, \tilde b, \tilde c$ are analytic functions and $\tilde A(0_{\R^n}) = \id$.
Let $u$ and $\tilde u$ be the fundamental solutions, for the operators $L$ and $\tilde L$ respectively, built according to our construction. We claim that $\tilde u(y) = u(\Phi^{-1}(y))$. Thanks to the above mentioned uniqueness result, since $\tilde L(u\circ\Phi^{-1}) = (Lu)\circ \Phi^{-1} = \delta\circ \Phi^{-1} = \delta$, it is sufficient to show that $u\circ \Phi^{-1}$ can be written as \begin{equation*}
    u\circ \Phi^{-1}(x) = \abs{x}\tilde U(x, \abs{x}^{-2}) ,
\end{equation*}
for some $\tilde U$ which is a power series having the above mentioned properties. One can check, by Taylor expanding $t\mapsto \sqrt{1+t}$, that there exists a power series $F$ so that
\begin{equation*}
    \abs{\Phi^{-1}(x)} = \abs{x}F(x, \abs{x}^{-2}),
\end{equation*}
hence, using \cref{eq:structural_property}, we get
\begin{equation*}
    u\circ \Phi^{-1}(x) = \abs{x}F(x, \abs{x}^{-2})U(\Phi^{-1}(x),  \abs{x}^{-2}F(x, \abs{x}^{-2})^{-1})
\end{equation*}
and this is sufficient to conclude by choosing $\tilde U(\xi, q) \defeq F(\xi, q)U(\Phi^{-1}(\xi), qF(\xi, q)^{-1})$.

Hence, in odd dimension, the fundamental solution we construct is uniquely characterized by its structural properties and is invariant under change of coordinates.

Let us now consider the even dimensions. First of all, if $u$ is the solution we construct, also $u+1$ (and more generally $u+\eta$ where $\eta$ satisfies $L\eta = 0$) can be expressed as in \cref{eq:main_expansion} by replacing $p_{n-2}$ with $p_{n-2} + \abs{x}^{3(n-2)}$ (we need that $n$ is even to have that the additional term is a polynomial).
Still, since we don't make any arbitrary choice in our construction, one may expect that the fundamental solution we obtain is invariant under change of coordinates (that is, if we repeat the construction after changing coordinates, we get the initial fundamental solution composed with the change of coordinates -- as in the odd dimensional case). It turns out that this is not true, as the following counterexample shall clarify.

Let $n=2$, $L=\lapl$ (that is $A_{ij} = \delta_{ij}$, $b_i=0$, $c=0$), our construction produces the standard fundamental solution $u(x)\defeq \log(\abs{x})$ (up to a multiplicative coefficient that we drop). Consider the diffeomorphism $\Phi(x_1, x_2) = (x_1, x_2 + x_1^2)$, whose inverse is given by $\Phi^{-1}(x) = (x_1, x_2 - x_1^2)$. Applying the formulas \cref{eq:coordinates_changeA,eq:coordinates_changeb,eq:coordinates_changec} we obtain that the operator $L$ becomes $\tilde L = \tilde A_{ij}\partial_{ij} + b_i\partial_i + c$ after the change of coordinates, where
\begin{align*}
    \tilde A(x) &= 
    \begin{pmatrix}
        1 & 2x_1\\
        2x_1 & 1+4x_1^2
    \end{pmatrix} ,\\
    \tilde b(x) &= \begin{pmatrix} 0 & 2 \end{pmatrix} ,\\
    \tilde c(x) &= 0 .
\end{align*}
For the operator $\tilde L$, our construction yields the fundamental solution $\tilde u$ which satisfies (as can be checked with \cite{ComputingFundSol})
\begin{equation*}
    \tilde u(x) = \log(\abs{x}) + \frac14 (x_2^3 - 3x_1^2x_2)\abs{x}^{-2} + \bigo(\abs{x}^2\log(\abs{x})).
\end{equation*}
It remains only to check that $\tilde u \not= u\circ\Phi^{-1}$. Let us compute the first terms of the expansion of $u\circ\Phi^{-1}$,
\begin{align*}
    u\circ\Phi^{-1}(x) 
    &= \frac12 \log(x_1^2 + x_2^2 + x_1^4 - 2x_1^2x_2)
    = \log(r) + \frac12 \log\Big(1 + \frac{x_1^4 - 2x_1^2x_2}{x_1^2 + x_2^2}\Big)
    \\
    &= \log(\abs{x}) - x_1^2x_2 \abs{x}^{-2} + \bigo(\abs{x}^2).
\end{align*}
The latter two formulas show that $u\circ\Phi^{-1}\not= \tilde u$, hence our construction is not invariant under change of coordinates in even dimension.

\subsection{Acknowledgements}
Both authors received funding from the European Research Council under the Grant
Agreement No. 721675 ``Regularity and Stability in Partial Differential Equations (RSPDE)''. Federico Franceschini has also been supported by Swiss National Science Foundation Ambizione Grant PZ00P2 180042. 

The authors would like to thank Matteo Bonforte and Alessio Figalli for the useful discussions on the topics of the paper. They are also grateful to Peter Hintz for his valuable comments on a draft of the paper which led to the addition of \cref{sec:uniqueness}.

\section{Harmonic decomposition}\label{sec:harmonicdecomposition}
We will work in $\R^n$ with $n\geq 2$. We denote with $B_r$ the ball of radius $r>0$ centered at the origin $0_{\R^n}$.
The Laplacian operator is $\lapl\defeq \partial_{11} + \partial_{22} + \cdots + \partial_{nn}$. The Japanese bracket of a value $x\in\R$ is $\jap{x}\defeq 1+\abs{x}$ (notice that this is not the usual definition of Japanese bracket). We denote with $\N$ the set of nonnegative integers $\{0,1,2\dots\}$. For $x\in\R$, we denote with $\lfloor x\rfloor$ the largest integer non greater than $x$. The symbol $\delta$ denotes the Dirac delta distribution at $0_{\R^n}$, namely $\langle{\delta,\varphi}\rangle=\varphi(0)$ for any $\varphi\in C^{\infty}_c(\R^n)$.

We will make extensive use of the following spaces of weighted harmonic homogeneous polynomials.
\begin{definition} 
    Consider the following subset of $\Z^2$
    \begin{equation*}
        \Omega\defeq\{(k,h)\in\Z^2 : k\geq 0, k+h>-n\}.
    \end{equation*}
    For each $k\ge 0$, let $\H^k$ be the vector space of $k$-homogeneous polynomials in $n$ variables. For each $k\geq 0, h \in\Z$, we define the following subspace of $C^\infty(\R^n\setminus\{0\})$:
    \begin{equation*}
        \H^{k,h} \defeq \begin{cases}
            \abs{x}^h\H^k &\text{when $h\not\in\{0, 2, 4, \dots\}$,} \\
            \abs{x}^h\H^{k}\oplus \abs{x}^h\log\abs{x}\H^{k} &\text{when $h\in\{0,2,4,\dots\}$.}
        \end{cases}
    \end{equation*}
    For an element $f\in \H^{k,h}$, we define its norm $\norm{f}$ as
    \begin{equation*}
        \norm{f}^2 \defeq \begin{cases}
            \int_{\partial B_1} p^2 &\text{if $h\not\in\{0, 2, 4, \dots\}$ and $f=p\abs{x}^h$,} \\
            \int_{\partial B_1} (p^2+q^2) &\text{if $h\in\{0,2,4,\dots\}$ and $f=p\abs{x}^h+q\abs{x}^h\log(\abs{x})$.}
        \end{cases}
    \end{equation*}
    We denote by $\H^{\bullet,\bullet}\defeq \prod_{(k,h)\in\Omega} \H^{k,h}$ and identify its elements with the formal series $f=\sum_{k,h}f_{k,h}$, where $f_{k,h}\in \H^{k,h}$, in this setting we call the \textit{support of }$f$ the set of the $(k,h)\in\Omega$ for which $f_{k,h}\neq 0$. We have the natural projections:
    \begin{equation*}
        \pi_{k,h}\colon \H^{\bullet,\bullet}\to \H^{k,h}.
    \end{equation*}
\end{definition}
\begin{remark}
    If $(k,h)\in\Omega$ then $\H^{k,h}\subset L^1_{loc}(\R^n)$. More generally, $\H^{k,h}\subset W^{k+h-n-1,1}_{loc}(\R^n)$.
\end{remark}
\begin{remark}\label{rmk:basis}
    When $h\not\in\{0, 2, 4, \dots\}$ we have a linear isomorphism $\H^k\to\H^{k,h}$ given by $p\mapsto\abs{x}^h p$. Similarly, when $h\in\{0, 2, 4, \dots\}$, we have a linear isomorphism $\H^k\times\H^k\to \H^{k,h}$, namely $(p,q)\mapsto p\abs{x}^h+q\abs{x}^h\log(\abs{x})$.
\end{remark}
\begin{definition}\label{def:linearoperator}
    A linear operator on $T\colon \H^{\bullet,\bullet}\to \H^{\bullet,\bullet}$ is, by definition, a collection of linear maps
    \begin{equation*}
        T=\big\{T_{k,h}^{k',h'}\colon \H^{k,h} \to \H^{k',h'}\big\}_{(k,h)\in\Omega}^{(k',h')\in\Omega},
    \end{equation*}
    with the following additional property:
    \begin{equation}\label{eq:propertyofT}
        \{(k,h)\in\Omega :\ T_{k,h}^{k',h'}\neq 0\}\text{ is finite for all $(k',h')\in\Omega$.}
    \end{equation}
    When $f\in\H^{\bullet,\bullet}$ we set 
    \begin{equation*}
        Tf\defeq \sum_{k',h'}\sum_{k,h} T_{k,h}^{k',h'}\big(\pi_{k,h}f\big) ,
    \end{equation*}
    which is well-defined thanks to \cref{eq:propertyofT}.
    
    Given $(k,h), (k', h')\in\Omega$, the operator norm of $T$ between these spaces is defined as
    \begin{equation*}
        \norm{T}_{\mathscr{L}(\H^{k,h}\to\H^{k',h'})} = \sup_{f\in\H^{k,h}:\, \norm{f}=1} \norm {\pi_{k',h'}(Tf)} \,.
    \end{equation*}
\end{definition}
\begin{remark}
    If $S,T\colon \H^{\bullet,\bullet}\to \H^{\bullet,\bullet}$, then $S\circ T$ can be defined by the formula
    \begin{equation*}
        (S\circ T)_{k,h}^{k',h'}\defeq \sum_{i,j} S^{k',h'}_{i,j}\circ T^{i,j}_{k,h},
    \end{equation*}
    which is well-defined thanks to \cref{eq:propertyofT} and enjoys property \cref{eq:propertyofT} itself.
\end{remark}
For all $k\geq 0,h\in \Z$, $p,q\in \H^k$ and $x\neq 0$ we have the algebraic identity
    \begin{equation}\label{eq:laplaciancomputation}
        \lapl(\abs{x}^hp+\abs{x}^h\log\abs{x}q)=h(2k+h+n-2)\abs{x}^{h-2}(p+\log\abs{x}q)+(2h+2k+n-2)\abs{x}^{h-2}q,
    \end{equation}
this formula motivates the choice of the $\H^{k,h}$. More precisely we have the following result concerning the Laplacian.
\begin{lemma}\label{lem:laplacianoperator}
    There exists a linear operator $\lapl\colon \H^{\bullet,\bullet}\to \H^{
    \bullet,\bullet}$ such that for all $(k,h)\in\Omega$ with $k+h>-(n-2)$:
    \begin{enumerate}[ref=(\arabic*)]
        \item for all $f\in \H^{k,h}$,$\lapl f \in \H^{k,h-2}$; \label{it:local3}
        \item using the identifications described in \cref{rmk:basis} ($\H^{k,h}\simeq \H^k$ or $\H^{k,h}\simeq\H^k\times\H^k$ according to $h$), we can write $\lapl$ in matrix form as
    \begin{equation}\label{eq:laplacianmatrix}
        (\lapl)_{k,h}^{k,h-2}=\begin{cases}
             h(2k+h+n-2)& \text{ for }h\not\in\{0, 2, 4, \dots\},\\
            \begin{bmatrix}
        0 & 2k+n-2
        \end{bmatrix} & \text{ for }h=0,\\
            \begin{bmatrix}
        h(2k+h+n-2) & 2(k+h)+n-2 \\
        0 & h(2k+h+n-2)
        \end{bmatrix} & \text{ for }h\in \{2, 4, 6, \dots\}.
        \end{cases}
    \end{equation}\label{it:local4}
        \item for all $f\in \H^{k,h}$, $\lapl f$ then coincides with the distributional Laplacian.\label{it:local5}
    \end{enumerate}

\end{lemma}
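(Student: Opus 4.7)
My plan is to let the pointwise identity \eqref{eq:laplaciancomputation} dictate both the definition of $\lapl$ on $\H^{\bullet,\bullet}$ and its matrix form \eqref{eq:laplacianmatrix}, and then to justify separately that the resulting operator coincides with the distributional Laplacian. Concretely, for each $(k,h)\in\Omega$ with $k+h>-(n-2)$, I define $\lapl_{k,h}^{k,h-2}$ to be the linear map prescribed by \eqref{eq:laplaciancomputation} applied to a generic element $\abs{x}^h p + \abs{x}^h\log\abs{x}\,q \in\H^{k,h}$ (with the convention $q\equiv 0$ when $h\notin\{0,2,4,\dots\}$), and I set all other matrix entries to zero. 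The finiteness condition \eqref{eq:propertyofT} is immediate, since for each $(k',h')\in\Omega$ only the single source $(k',h'+2)$ can contribute. Reading off \eqref{eq:laplacianmatrix} in the three cases $h\notin\{0,2,\dots\}$, $h=0$, $h\in\{2,4,\dots\}$ is then routine bookkeeping, amounting to separating the $\abs{x}^{h-2}$ and $\abs{x}^{h-2}\log\abs{x}$ contributions on the right-hand side of \eqref{eq:laplaciancomputation}.

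The genuine content is part \cref{it:local5}: that this abstractly defined $\lapl f$ agrees with the distributional Laplacian of $f$. On $\R^n\setminus\{0\}$ this is automatic, so the only question is whether a singular contribution at the origin can arise—as it does, crucially, for $\abs{x}^{2-n}\in\H^{0,2-n}$, which is precisely the borderline case $k+h=-(n-2)$ excluded by hypothesis. The natural argument is a cutoff: let $\chi_\varepsilon\in C^\infty(\R^n)$ be a radial cutoff equal to $1$ on $B_\varepsilon$ and $0$ outside $B_{2\varepsilon}$, with $\abs{\nabla\chi_\varepsilon}\lesssim\varepsilon^{-1}$ and $\abs{\lapl\chi_\varepsilon}\lesssim\varepsilon^{-2}$. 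Since $(1-\chi_\varepsilon)f$ vanishes near the origin, honest integration by parts against any $\varphi\in C^\infty_c(\R^n)$ gives
\begin{equation*}
    \int (1-\chi_\varepsilon)f\,\lapl\varphi \;=\; \int (1-\chi_\varepsilon)(\lapl f)\,\varphi \;-\; 2\int (\nabla\chi_\varepsilon\cdot\nabla f)\,\varphi \;-\; \int f\,(\lapl\chi_\varepsilon)\,\varphi,
\end{equation*}
and it suffices to send $\varepsilon\to 0$ and check that the two commutator integrals vanish in the limit.

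This quantitative check is the only nontrivial point, and it is where the hypothesis enters. Both commutators are supported on the annulus $B_{2\varepsilon}\setminus B_\varepsilon$, and using the pointwise bounds $\abs{f}\lesssim \abs{x}^{k+h}(1+\abs{\log\abs{x}})$ and $\abs{\nabla f}\lesssim \abs{x}^{k+h-1}(1+\abs{\log\abs{x}})$ each is of size $O(\varepsilon^{n+k+h-2}\abs{\log\varepsilon})$, which tends to zero precisely when $n+k+h-2>0$, i.e., $k+h>-(n-2)$. Thus the threshold appearing in the statement is both necessary and sufficient, and this concludes the proof.
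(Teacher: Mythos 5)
Your proof is correct, and it is essentially the paper's argument spelled out: the paper simply notes that $\H^{k,h}\subset W^{2,1}_{\mathrm{loc}}(\R^n)$ when $k+h>-(n-2)$ and invokes the resulting coincidence of distributional and pointwise Laplacian, whereas you supply the standard cutoff/commutator argument that underlies that inclusion. Your version is slightly more modest and self-contained — you verify only that the distributional Laplacian (rather than the full Hessian) is regular, which is exactly what item \cref{it:local5} requires — and your annulus estimate $O(\varepsilon^{n+k+h-2}\abs{\log\varepsilon})$ makes transparent why the threshold $k+h>-(n-2)$ is sharp.
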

\begin{proof}
    For $k+h>-(n-2)$ we have $\H^{k,h}\subset W^{2,1}_{loc}(\R^n)$, so the distributional Laplacian coincides with the pointwise Laplacian which is given by \cref{eq:laplaciancomputation}, hence items \cref{it:local3,it:local5} follow. Item \cref{it:local4} is a restatement of \cref{eq:laplaciancomputation}.
\end{proof}
\begin{remark}\label{rmk:laplacianfullpicture}
    Pick any $f\in\H^{k,h}$. When $2k+h>-(n-2)$ formula \cref{eq:laplaciancomputation} still gives the distributional Laplacian if we interpret it in the principal value sense. On the other hand, when $k,h$ lie on the critical line $\{2k+h+n-2=0\}$, formula \eqref{eq:laplaciancomputation} gives zero while $f$ is not necessarily harmonic. This makes sense since $\lapl f$ is a linear combination of derivatives of the $\delta$ distribution of order $k$ (just considering the homogeneity and the classification of distributions supported at a point). For example $\lapl \H^{0,2-n}\subset \R \delta$, this suggests to define $\H^{0,-n}=\R\delta$.
\end{remark}
Thanks to the previous lemma, we are able to construct a right inverse of $\lapl$, which raises the homogeneity exactly of $2$.
\begin{lemma}\label{lem:inverse_lapl}
There exists an operator $\lapl^{-1}\colon \H^{\bullet,\bullet}\to \H^{
    \bullet,\bullet}$ such that for all $(k,h)\in\Omega$ we have:
    \begin{enumerate}[ref=(\arabic*)]
        \item for all $ f\in\H^{k,h+2}$, $\lapl^{-1} f\in\H^{k,h}$;\label{it:local6}
        \item $\norm{(\lapl^{-1})_{k,h}^{k,h+2}}_{\mathscr{L}(\H^{k,h}\to\H^{k,h+2})}\leq \frac{C(n)}{\jap{h}\jap{\max\{k,\abs{h}\}}}$;\label{it:local7}
        \item $\lapl\circ \lapl^{-1} =\id_{\H^{k,h}}$.\label{it:local8}
    \end{enumerate}
\end{lemma}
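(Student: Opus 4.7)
My plan is to build $\lapl^{-1}$ componentwise. I set $(\lapl^{-1})^{k',h'}_{k,h} = 0$ unless $(k',h') = (k, h+2)$, in which case I define $(\lapl^{-1})^{k,h+2}_{k,h}$ to be a canonical right inverse of the matrix $(\lapl)^{k,h}_{k,h+2}$ exhibited in \cref{eq:laplacianmatrix} (with $h$ replaced by $h+2$). The finiteness requirement \cref{eq:propertyofT} is then automatic, since each target $\H^{k',h'}$ is hit only from its unique predecessor $\H^{k', h'-2}$, and property \cref{it:local8} follows directly from the right-inverse property.

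The construction splits into three sub-cases according to the shape of $(\lapl)^{k,h}_{k,h+2}$. In the generic case $h+2 \notin 2\N$, the map is multiplication by the scalar $(h+2)(2k+h+n)$, which is a nonzero integer because $h+2 \neq 0$ and, by $(k,h)\in\Omega$, $2k+h+n > k \geq 0$; its inverse is the reciprocal. When $h=-2$, the matrix $[0,\ 2k+n-2]$ is a surjection $\H^{k}\times\H^{k}\to\H^{k}$ with a one-dimensional kernel, and I pick the right inverse $[0,\ (2k+n-2)^{-1}]^{T}$ that lands orthogonally to the kernel; the denominator is nonzero because $(k,-2)\in\Omega$ excludes the degenerate case $(n,k)=(2,0)$. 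Finally, when $h+2\in\{2,4,\dots\}$, the matrix is upper-triangular with equal diagonal entries $(h+2)(2k+h+n)>0$, hence invertible, and its inverse has diagonal entries of size $\tfrac{1}{(h+2)(2k+h+n)}$ and an off-diagonal entry proportional to $\tfrac{2k+2h+n+2}{((h+2)(2k+h+n))^{2}}$.

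Via the isometric identifications of \cref{rmk:basis}, the operator norm appearing in \cref{it:local7} equals the $\ell^{2}$-operator norm of the matrix inverses described above, so the bound reduces to the two scalar inequalities $|h+2|\geq c\,\jap{h}$ for $h\in\Z\setminus\{-2\}$ (immediate, with $c=\tfrac{1}{4}$) and $|2k+h+n|\geq c(n)\,\jap{\max\{k,|h|\}}$ for $(k,h)\in\Omega$. For the second inequality I split on the sign of $h$: when $h\geq 0$ it follows from $2k+h+n\geq \max(k,h)+n$; when $h<0$ with $k\geq|h|$, one writes $2k+h+n\geq k+n$; and in the delicate subcase $h<0$ with $k<|h|$, the condition $k+h>-n$ forces $k>|h|-n$, whence $2k+h+n\geq |h|-n+2$, still comparable to $\jap{|h|}$ with a constant depending only on $n$. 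The off-diagonal entry of the $2\times 2$ inverse is absorbed into the same bound thanks to the spare factor $(h+2)^{-1}\leq 1$.

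The main obstacle is precisely this last estimate near the boundary of $\Omega$ where $k+h$ is barely above $-n$: it is only at this boundary that the admissibility condition $k+h>-n$ is used in an essential way, and failing to exploit it sharply would cost the polynomial decay in $|h|$ that will be needed downstream when iterating $T^\ell\delta$.
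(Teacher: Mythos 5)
Your construction coincides with the paper's: you invert the $2\times 2$ (or scalar) matrix of $(\lapl)_{k,h+2}^{k,h}$ componentwise and make a choice of right inverse precisely when $h=-2$, and your choice $[0,\ (2k+n-2)^{-1}]^{T}$ is exactly the paper's $p\abs{x}^{-2}\mapsto p\log\abs{x}/(2k+n-2)$ under the identifications of \cref{rmk:basis}. The reduction to the bound $\abs{(h+2)(2k+h+n)}\gtrsim_n\jap{h}\jap{\max\{k,\abs{h}\}}$ and the case split on the sign of $h$ (using $k+h>-n$ sharply when $h<0$, $k<\abs{h}$) are the same computation the paper leaves implicit, so the proof is correct and follows the paper's route.
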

\begin{proof}
    Inspecting \cref{eq:laplacianmatrix} we see that $\ker(\lapl)_{k,h}^{k,h+2}$ is nontrivial only for $h=0$, hence the only operators we have some freedom to define (which respect item \cref{it:local6}) are $(\lapl^{-1})_{k,-2}^{k,0}$ for $k\geq 0$. In this case we set
    \begin{equation*}
        (\lapl^{-1})_{k,-2}^{k,0}(p\abs{x}^{-2})\defeq \frac{ p\log\abs{x}}{2k+n-2},\quad \text{ for all }p\in\H^k, k\geq 0.
    \end{equation*}
    This choice gives items \cref{it:local6,it:local8} and also item \cref{it:local7} in the case $h\not=-2$. It remains to prove item \cref{it:local7} when $h+2\not=0$. A short computation using \cref{eq:laplacianmatrix} gives 
    \begin{equation*}
        \norm{(\lapl^{-1})_{k,h}^{k,h+2}}\leq \frac{C}{\abs{(h+2)(2k+h+n)}},
    \end{equation*}
    for some universal number $C>0$. It can then be checked that there is a constant $C(n)$ such that
    \begin{equation*}
        \frac{\jap{h}\jap{\max\{k,\abs{h}\}}}{\abs{(h+2)(2k+h+n)}}\leq C(n),\text{ provided }(k,h)\in\Omega,\, h\neq -2,
    \end{equation*}
    completing the proof of item \cref{it:local7}.
\end{proof}
\begin{remark}\label{rmk:fundamentalsolution}
    Coherently with \cref{rmk:laplacianfullpicture}, we write $\lapl^{-1}\delta=c_2\log\abs{x}$ if $n=2$, and $\lapl^{-1}\delta=c_n\abs{x}^{2-n}$ if $n\geq 3$. Where the constants $c_n\neq 0$ ensures that \cref{it:local10} holds. Since the $\delta$ distribution is $-n$-homogeneous, once again this choice raises the homogeneity by $2$. 
\end{remark}
Before going on we recall the well-known (\cite[Lemma 2.2.1]{ArmitageGardiner2001}) orthogonality between harmonic polynomial with different homogeneities.

\begin{lemma}\label{lem:orthogonality}
For all $k\neq k'$ we have $\H^{k}\perp\H^{k'}$ in $L^2(\partial B_1)$ and in $L^2(B_1)$.
\end{lemma}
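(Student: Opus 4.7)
The plan is to prove the spherical orthogonality first by Green's identity, and then deduce the $L^2(B_1)$ orthogonality by integrating in polar coordinates.

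Concretely, let $p\in\H^k$ and $q\in\H^{k'}$ with $k\ne k'$. Since $p,q$ are smooth up to the boundary of $B_1$ and harmonic in $B_1$, Green's second identity gives
\begin{equation*}
    0 = \int_{B_1}\bigl(p\,\lapl q - q\,\lapl p\bigr) = \int_{\partial B_1}\bigl(p\,\partial_\nu q - q\,\partial_\nu p\bigr).
\end{equation*}
Next I would invoke Euler's identity for homogeneous functions: since $p$ is $k$-homogeneous and $q$ is $k'$-homogeneous, on $\partial B_1$ one has $\partial_\nu p = kp$ and $\partial_\nu q = k'q$. Substituting into the boundary integral yields
\begin{equation*}
    0 = (k'-k)\int_{\partial B_1} pq,
\end{equation*}
and since $k\ne k'$, the $L^2(\partial B_1)$ orthogonality follows.

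For the $L^2(B_1)$ orthogonality I would pass to polar coordinates $x = r\sigma$ with $r\in(0,1)$ and $\sigma\in\partial B_1$. Using the homogeneities $p(r\sigma) = r^k p(\sigma)$ and $q(r\sigma) = r^{k'}q(\sigma)$ one obtains
\begin{equation*}
    \int_{B_1} pq\,dx = \Bigl(\int_0^1 r^{n-1+k+k'}\,dr\Bigr)\Bigl(\int_{\partial B_1} p(\sigma)q(\sigma)\,d\sigma\Bigr) = 0,
\end{equation*}
since the second factor vanishes by the previous step.

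There is no real obstacle here: the argument is a textbook application of integration by parts together with the scaling properties of homogeneous functions, and the only thing to be careful about is invoking Euler's formula correctly (and, if desired, noting that both integrals are finite since $p,q$ are polynomials). The lemma is stated in \cite{ArmitageGardiner2001} for exactly this reason, and the proof above is the one given there.
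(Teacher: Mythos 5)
Your proof is correct and matches the paper's argument: Gauss--Green (Green's second identity) combined with Euler's formula for homogeneous functions gives $(k-k')\int_{\partial B_1}pq=0$, yielding spherical orthogonality. The paper's proof only spells out the $L^2(\partial B_1)$ part, leaving the $L^2(B_1)$ case implicit; your polar-coordinates step fills that in in the obvious way.
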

\begin{proof}
Let $f\in\H^k$ and $g\in\H^{k'}$.
Thanks to the Gauss-Green identity and the Euler formula for homogeneous functions, we have
\begin{equation*}
(k-k')\int_{\partial B_1}f g=\int_{\partial B_1} \partial_\nu f g-\partial_\nu g f =\int_{B_1}\lapl f g -\lapl g f = 0.
\qedhere
\end{equation*}
\end{proof}
\begin{lemma}[Multiplication by $x_\alpha$]\label{lem:multiplicationoperator}
    For each $\alpha\in\{1,\ldots,n\}$ there exists a linear operator $m_{x_\alpha}\colon \H^{\bullet,\bullet}\to \H^{
    \bullet,\bullet}$ such that, for all $(k,h)\in\Omega$,
    \begin{enumerate}[ref=(\arabic*)]
        \item for all $f\in \H^{k,h}$, $m_{x_\alpha} f\in \H^{k+1,h}\oplus\H^{k-1,h+2}$ (with the convention $\H^{-1,h+2}=\{0\}$);\label{it:local9}
        \item $\norm{(m_{x_\alpha})_{k,h}^{k',h'}}_{\mathscr{L}(\H^{k,h}\to\H^{k',h'})}\leq 1$ for all $(k',h')\in \Omega$;\label{it:local10}
        \item for all $f\in \H^{k,h}$ we have $x_\alpha f = m_{x_\alpha} f$ as $L^1_{loc}$ functions.\label{it:local11}
    \end{enumerate}
\end{lemma}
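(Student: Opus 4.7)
The plan is to define $m_{x_\alpha}$ via the Fischer/harmonic decomposition: if $p\in\H^k$ is a homogeneous harmonic polynomial, then the $(k+1)$-homogeneous polynomial $x_\alpha p$ decomposes as
\begin{equation*}
    x_\alpha p = P + \abs{x}^2 Q, \qquad P\in\H^{k+1},\ Q\in\H^{k-1},
\end{equation*}
with $P,Q$ harmonic. To obtain this decomposition and identify $Q$, I would apply the distributional Laplacian and use the identity $\lapl(x_\alpha p)=2\partial_\alpha p$ (valid since $p$ is harmonic). Writing $x_\alpha p$ in its harmonic decomposition $\sum_{j\ge 0}\abs{x}^{2j}h_{k+1-2j}$ and applying \cref{eq:laplaciancomputation} term by term, the uniqueness of the harmonic decomposition forces $h_{k+1-2j}=0$ for $j\ge 2$ and pins down $Q=\frac{\partial_\alpha p}{2k+n-2}$; then $P\defeq x_\alpha p-\abs{x}^2 Q$. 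This is essentially where the content of the lemma lives.

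Given this building block, I would define $m_{x_\alpha}$ on $\H^{k,h}$ as follows. In the generic case $h\not\in\{0,2,4,\dots\}$, for $f=\abs{x}^h p$ set
\begin{equation*}
    m_{x_\alpha} f \defeq \abs{x}^h P + \abs{x}^{h+2} Q \in \H^{k+1,h}\oplus \H^{k-1,h+2}.
\end{equation*}
In the exceptional case $h\in\{0,2,4,\dots\}$, for $f=\abs{x}^h p+\abs{x}^h\log\abs{x}\,q$, apply the decomposition to both $x_\alpha p$ and $x_\alpha q$ and set
\begin{equation*}
    m_{x_\alpha} f\defeq \bigl(\abs{x}^h P_p+\abs{x}^h\log\abs{x}\,P_q\bigr) + \bigl(\abs{x}^{h+2}Q_p+\abs{x}^{h+2}\log\abs{x}\,Q_q\bigr),
\end{equation*}
which visibly lands in $\H^{k+1,h}\oplus\H^{k-1,h+2}$. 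Items \cref{it:local9} and \cref{it:local11} are then immediate: the first is how we defined the target, and the second holds because on $\R^n\setminus\{0\}$ we have $x_\alpha f=m_{x_\alpha}f$ pointwise by construction, and both sides are in $L^1_{loc}$ since $(k,h)\in\Omega$.

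The only genuine computation is the norm bound \cref{it:local10}. On $\partial B_1$ the factor $\abs{x}^h$ (and $\log\abs{x}$) equals $1$ (resp.\ $0$), so pulling out the projections I need
\begin{equation*}
    \norm{P}_{L^2(\partial B_1)}^2 + \norm{Q}_{L^2(\partial B_1)}^2 = \norm{x_\alpha p}_{L^2(\partial B_1)}^2 \le \norm{p}_{L^2(\partial B_1)}^2,
\end{equation*}
where the equality uses the orthogonality $\H^{k+1}\perp\H^{k-1}$ in $L^2(\partial B_1)$ (\cref{lem:orthogonality}, recalling $\abs{x}^2=1$ on the unit sphere) and the inequality uses $\abs{x_\alpha}\le 1$ on $\partial B_1$. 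In particular each of $\norm{P}$ and $\norm{Q}$ is bounded by $\norm{p}$. For the exceptional $h$, applying the same inequality to $p$ and to $q$ separately and summing yields $\norm{P_p}^2+\norm{P_q}^2\le\norm{p}^2+\norm{q}^2$ (and analogously for $Q$), which is precisely what the definition of $\norm{\cdot}$ on $\H^{k+1,h}$ and $\H^{k-1,h+2}$ requires. This gives \cref{it:local10}. No step presents a real obstacle; the only subtlety is keeping track of the $\log\abs{x}$ part in the exceptional case, but since $\abs{x}^{h+2}\log\abs{x}$ is exactly the companion element of the target space $\H^{k-1,h+2}$ (as $h+2$ is still even), the decomposition matches the definition of the norm verbatim.
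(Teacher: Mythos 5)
Your proof is correct and follows essentially the same route as the paper: you identify the same decomposition $x_\alpha p = P + \abs{x}^2 Q$ with $Q=\frac{\partial_\alpha p}{2k+n-2}$ and $P=x_\alpha p-\abs{x}^2Q$, extend it to the logarithmic case by acting on $p$ and $q$ separately, and use orthogonality of distinct-degree harmonic polynomials on $\partial B_1$ together with $\abs{x_\alpha}\le 1$ on the sphere to obtain the norm bound. The only wrinkle is how $P,Q$ are justified: you invoke the existence and uniqueness of the full harmonic decomposition of the polynomial $x_\alpha p$, whereas the paper deliberately avoids relying on that statement (it derives \cref{thm:harmonicdecomposition} \emph{from} this lemma by iteration) and instead verifies directly that $P$ is harmonic via the identity $\lapl(x_\alpha p)=2\partial_\alpha p=\frac{1}{2k+n-2}\lapl(\abs{x}^2\partial_\alpha p)$; since the harmonic decomposition of homogeneous polynomials is classical and can be proven independently, this is not an error in your argument, only an inversion of the paper's internal logical order.
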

\begin{proof}
    We first check the case $h=0$. We claim that if $p\in\H^k$, then $x_\alpha p \in \H^{k+1,0}\oplus\H^{k-1,2}$ which gives items \cref{it:local9,it:local10}. Notice the identity
    $$
    \lapl(x_\alpha p)=2\partial_\alpha p =\frac{1}{2k+n-2}\lapl(\abs{x}^2 \partial_\alpha p),
    $$
    which gives the decomposition
    \begin{align}\label{eq:multiplicationdecomposition}
        x_\alpha p = \underbrace{x_\alpha p -\frac{\partial_\alpha p \abs{x}^2}{2k+n-2}}_{\in\H^{k+1,0}} + \underbrace{\frac{\partial_\alpha p \abs{x}^2}{2k+n-2}}_{\in\H^{k-1,2}}.
    \end{align}
    In order to prove item \cref{it:local11} we use that $\H^{k+1,0}\perp \H^{k-1,2}$ in $L^2(\partial B_1)$ and find
    \begin{equation}\label{eq:multiplicationbound}
    \norm*{x_\alpha p -\frac{\partial_\alpha p \abs{x}^2}{2k+n-2}}_{L^2(\partial B_1)}^2+\norm*{\frac{\partial_\alpha p \abs{x}^2}{2k+n-2}}_{L^2(\partial B_1)}^2=\norm{x_\alpha p }_{L^2(\partial B_1)}^2\leq \norm{p}_{L^2(\partial B_1)}^2.
    \end{equation}
    
    We address the general case $h\neq 0$. Take any $f=p\abs{x}^h+q\abs{x}^h\log\abs{x}\in\H^{k,h}$ and use \cref{eq:multiplicationdecomposition} to split it
    \begin{align*}
        x_\alpha f & = \left(x_\alpha p -\frac{\partial_\alpha p \abs{x}^2}{2k+n-2}\right)\abs{x}^h+\left(x_\alpha q -\frac{\partial_\alpha q \abs{x}^2}{2k+n-2}\right)\abs{x}^h\log\abs{x} & (\in\H^{k+1,h})\\
        & \qquad+ \left(\frac{\partial_\alpha p}{2k+n-2}\right)\abs{x}^{h+2}+\frac{\partial_\alpha q }{2k+n-2}\abs{x}^{h+2}\log\abs{x} & (\in \H^{k-1,h+2}).
    \end{align*}
    This immediately gives that $\pi_{k',h'}(x_\alpha f)=0$ unless $(k',h')\in \{(k+1,h);(k-1,h+2)\}$, so items \cref{it:local9,it:local11} are proved. In order to prove item \cref{it:local10} we exploit \cref{eq:multiplicationbound}; for example
    \begin{align*}
       \norm{\pi_{k-1,h+2}(x_\alpha f)}^2= \norm*{\frac{\partial_\alpha p }{2k+n-2}}_{L^2(\partial B_1)}^2 +\norm*{\frac{\partial_\alpha q }{2k+n-2}}_{L^2(\partial B_1)}^2\leq \norm{p}_{L^2(\partial B_1)}^2+\norm{q}_{L^2(\partial B_1)}^2=\norm{f}^2,
    \end{align*}
    and a very similar computation for $\pi_{k+1,h}(x_\alpha f)$.
\end{proof}
\begin{remark}\label{rmk:multiplicationcritical}
    $m_{x_\alpha}$ can be defined also for $k+h=-n$ using formula \cref{eq:multiplicationdecomposition}. Properties \cref{it:local9,it:local10,it:local11} still hold and make sense.
\end{remark}
Incidentally, this gives the following well-known decomposition (\cite[Corollary 2.1.2 ]{ArmitageGardiner2001}).
\begin{theorem}\label{thm:harmonicdecomposition}
    For each $\ell\geq 0$ let $V_\ell$ be the vector space of $\ell$-homogeneous polynomials in $n$ variables, then
    \begin{equation*}
        V_\ell=\H^{\ell,0}\oplus \H^{\ell-2,2}\oplus\ldots\oplus\H^{\ell-2\lfloor\ell/2\rfloor,2\lfloor\ell/2\rfloor},
    \end{equation*}
    and this decomposition is orthogonal in the $L^2(\partial B_1)$ scalar product. Thus we can define, for all $i,j\in \N$, the orthogonal projections $\Pi_{i,j}\colon V_{i+j}\to\H^{i,j}$.
\end{theorem}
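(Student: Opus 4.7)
The plan is to argue by induction on $\ell$, bootstrapping from the Laplacian formula \cref{eq:laplaciancomputation} to lift a decomposition of $V_{\ell-2}$ up to $V_\ell$. The base cases $\ell\in\{0,1\}$ are trivial, since every polynomial of degree at most $1$ is harmonic, hence $V_\ell=\H^{\ell,0}$.

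For the inductive step with $\ell\geq 2$, take $p\in V_\ell$ and observe that $\lapl p\in V_{\ell-2}$. By the inductive hypothesis,
\begin{equation*}
    \lapl p = \sum_{j=0}^{\lfloor(\ell-2)/2\rfloor} \abs{x}^{2j} q_j, \qquad q_j\in\H^{\ell-2-2j}.
\end{equation*}
Specializing \cref{eq:laplaciancomputation} to the purely polynomial case (with the logarithmic part absent) one computes
\begin{equation*}
    \lapl\bigl(\abs{x}^{2j+2} q_j\bigr) = (2j+2)(2\ell-2j+n-4)\,\abs{x}^{2j}q_j,
\end{equation*}
and a short check shows that both factors are strictly positive whenever $j\in\{0,1,\dots,\lfloor(\ell-2)/2\rfloor\}$, $\ell\geq 2$ and $n\geq 2$. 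Hence the polynomial
\begin{equation*}
    \tilde p \defeq p - \sum_{j=0}^{\lfloor(\ell-2)/2\rfloor} \frac{\abs{x}^{2j+2}q_j}{(2j+2)(2\ell-2j+n-4)}
\end{equation*}
is homogeneous of degree $\ell$ and satisfies $\lapl\tilde p = 0$, so $\tilde p\in\H^\ell$. Re-indexing $j\mapsto j-1$ yields the desired decomposition of $p$.

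The orthogonality is immediate: for $j\neq j'$ and $p\in\H^{\ell-2j}$, $p'\in\H^{\ell-2j'}$, one has $\int_{\partial B_1}\abs{x}^{2j}p\cdot\abs{x}^{2j'}p' = \int_{\partial B_1} p\,p' = 0$ by \cref{lem:orthogonality}, since $\abs{x}\equiv 1$ on $\partial B_1$. This in turn gives uniqueness, and hence that the sum is truly direct: if $\sum_j\abs{x}^{2j}p_j=0$, restricting to $\partial B_1$ and using orthogonality forces each $p_j$ to vanish on $\partial B_1$, hence identically by homogeneity. The projections $\Pi_{i,j}$ are then simply the projections onto the summands of this orthogonal decomposition.

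The only mildly technical point is verifying that the lifting denominators $(2j+2)(2\ell-2j+n-4)$ never vanish in the admissible range, which amounts to the bound $2\ell-2j+n-4\geq \ell+n-2\geq 2$; no substantial obstacle arises beyond this.
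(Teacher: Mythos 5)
Your proof is correct and takes a genuinely different route from the one sketched in the paper. The paper proposes an induction on $\ell$ that \emph{builds up}: one iterates \cref{lem:multiplicationoperator}, decomposing a monomial of degree $\ell$ as $x_\alpha$ times a monomial of degree $\ell-1$, applying the inductive decomposition of $V_{\ell-1}$, and then using $m_{x_\alpha}(\H^{k,h})\subseteq \H^{k+1,h}\oplus\H^{k-1,h+2}$ to land each piece in the correct summand. Your argument instead \emph{reduces}: you apply $\lapl$ to drop to $V_{\ell-2}$, decompose there, and then use the explicit inverse weights $((2j+2)(2\ell-2j+n-4))^{-1}$ from \cref{eq:laplaciancomputation} to lift and subtract off the non-harmonic part, leaving a harmonic remainder. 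This is the classical Gauss/Clebsch-style proof of the harmonic decomposition. Your route is self-contained given only \cref{eq:laplaciancomputation} and \cref{lem:orthogonality} and gives a closed-form description of the projections, whereas the paper's route is more in the spirit of the operator framework it develops (it reuses the bound $\norm{m_{x_\alpha}}\le 1$, which is the content actually needed later). Both uses of \cref{lem:orthogonality} for directness and $L^2(\partial B_1)$-orthogonality are the same, and your check that the lifting denominators stay positive on the admissible range of $j$ is correct.
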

\begin{proof}[Sketch of the proof.]
    Uniqueness is a consequence of \cref{lem:orthogonality}. Existence can be proven by induction on $\ell$ iterating \cref{lem:multiplicationoperator}.
\end{proof}
We can now generalize \cref{lem:multiplicationoperator} to any polynomial and, as we will see later, to analytic functions.
\begin{lemma}\label{lem:mul_anal}
     Let $a=\sum_{\abs{\beta}\leq d}c_\beta x^\beta$ be a polynomial of degree $d\geq0$, where $\beta\in\N^n$ is a multi-index and $\{c_\beta\}$ are numbers. Then the operator $m_a\colon \H^{\bullet,\bullet}\to \H^{\bullet,\bullet}$ defined by
     \begin{equation*}
        m_a\defeq \sum_\beta c_\beta \underbrace{m_{x_{1}}\circ\ldots\circ m_{x_{1}}}_{\beta_1 \text{ times }}\circ \ldots \circ \underbrace{m_{x_{n}}\circ \ldots\circ m_{x_{n}}}_{\beta_n \text{ times }},
     \end{equation*}
     enjoys the following properties for all $(k,h)\in\Omega$:
     \begin{enumerate}[ref=(\arabic*)]
         \item\label{it:local12} $(m_a)_{k,h}^{k',h'}=0$ whenever $(k'-k,h'-h)\not\in \{c_1(1,0)+c_2(-1,2): c_1,c_2\in \N\}$;
         \item\label{it:local13} $\norm{m_a}_{\mathscr L(\H^{k,h}\to \H^{k',h'})}\leq \norm{a_{k'+h'-k-h}}_{L^\infty(\partial B_1)}$ for all $(k',h')\in\Omega$. Here $a_\ell=\sum_{\abs{\beta}=\ell}c_\beta x^\beta$ for $\ell\geq 0$, and $a_\ell\equiv 0$ for $\ell<0$;
         \item\label{it:local17} for all $f\in \H^{k,h}$, $a f =m_a f$ as $L^1_{loc}$ functions.
    \end{enumerate}
\end{lemma}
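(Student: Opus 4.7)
The plan is to dispatch items \ref{it:local12} and \ref{it:local17} first, since these follow readily from \cref{lem:multiplicationoperator} by induction, and then focus on the sharp norm bound \ref{it:local13}, which is the substantive content of the lemma.

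Item \ref{it:local17} follows by linearity from \cref{lem:multiplicationoperator}\ref{it:local11}: each factor $m_{x_\alpha}$ agrees with pointwise multiplication by $x_\alpha$ as an $L^1_{loc}$ identity, hence so does any finite composition, and hence so does the finite linear combination $m_a$. As a bonus, this shows that the composition $m_{x_1}^{\beta_1}\circ\cdots\circ m_{x_n}^{\beta_n}$ is independent of the order of its factors, so $m_a$ is unambiguously defined. For item \ref{it:local12}, a direct induction on $\abs{\beta}$ built on \cref{lem:multiplicationoperator}\ref{it:local9} shows that $m_{x^\beta}$ shifts $(k,h)$ only by vectors of the form $c_1(1,0)+c_2(-1,2)$ with $c_1+c_2=\abs{\beta}$; summing over $\beta$ gives item \ref{it:local12}. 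In particular, given a target $(k',h')\in\Omega$, the sources $(k,h)$ that can contribute are constrained by $0\leq c_1,c_2$ and $c_1+c_2\leq d$, so they are finite in number, and $m_a$ is a legitimate operator in the sense of \cref{def:linearoperator}.

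For item \ref{it:local13}, the crucial remark is that along any nonzero component $(m_{x^\beta})_{k,h}^{k',h'}$ one has $(k'-k)+(h'-h)=c_1+c_2=\abs{\beta}$, forcing $\abs{\beta}=\ell\defeq k'+h'-k-h$. Therefore $(m_a)_{k,h}^{k',h'}=(m_{a_\ell})_{k,h}^{k',h'}$ depends only on the single homogeneous piece $a_\ell$ of $a$ (and is zero when $\ell<0$, making the bound trivial). Writing $f=p\abs{x}^h+q\abs{x}^h\log\abs{x}\in\H^{k,h}$ (with $q\equiv0$ if $h\notin\{0,2,4,\dots\}$), item \ref{it:local17} identifies $m_{a_\ell}f$ with the $L^1_{loc}$ function $a_\ell p\abs{x}^h+a_\ell q\abs{x}^h\log\abs{x}$. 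By \cref{thm:harmonicdecomposition}, the $(\ell+k)$-homogeneous polynomials $a_\ell p$ and $a_\ell q$ admit $L^2(\partial B_1)$-orthogonal decompositions $a_\ell p=\sum_j P_j\abs{x}^{2j}$ and $a_\ell q=\sum_j Q_j\abs{x}^{2j}$ with $P_j,Q_j\in\H^{\ell+k-2j}$. Selecting the summand with $j_0\defeq(h'-h)/2$ gives
\[
    \pi_{k',h'}(m_a f)=P_{j_0}\abs{x}^{h'}+Q_{j_0}\abs{x}^{h'}\log\abs{x},
\]
whose squared norm (according to the definition in \cref{sec:harmonicdecomposition}) is $\norm{P_{j_0}}_{L^2(\partial B_1)}^2+\norm{Q_{j_0}}_{L^2(\partial B_1)}^2$. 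Pythagoras in $L^2(\partial B_1)$, courtesy of \cref{lem:orthogonality}, yields $\norm{P_{j_0}}_{L^2(\partial B_1)}^2\leq\norm{a_\ell p}_{L^2(\partial B_1)}^2\leq\norm{a_\ell}_{L^\infty(\partial B_1)}^2\norm{p}_{L^2(\partial B_1)}^2$ and the analogous inequality for $Q_{j_0}$; summing gives $\norm{\pi_{k',h'}(m_a f)}^2\leq\norm{a_\ell}_{L^\infty(\partial B_1)}^2\norm{f}^2$, which is item \ref{it:local13}.

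The main obstacle is item \ref{it:local13}: a naive triangle inequality over $\beta$ would only yield the weaker bound $\sum_\beta\abs{c_\beta}$, which is much larger than $\norm{a_\ell}_{L^\infty(\partial B_1)}$. The decisive point is to observe that only the single homogeneous slice $a_\ell$ contributes between fixed $(k,h)$ and $(k',h')$, and then to extract the sharp $L^\infty$ bound via orthogonality of harmonic polynomials of distinct degrees; the parallel treatment of the polynomial and logarithmic components $p,q$ handles both sectors of $\H^{k,h}$ uniformly.
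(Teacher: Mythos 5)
Your proof is correct and follows essentially the same route as the paper's. Both arguments reduce item \ref{it:local13} to the observation that only the homogeneous slice $a_\ell$ with $\ell = k'+h'-k-h$ contributes to $(m_a)_{k,h}^{k',h'}$, and then bound the resulting $L^2(\partial B_1)$-orthogonal projection of $a_\ell p$ (and $a_\ell q$) by the $L^\infty$ norm of $a_\ell$ times the $L^2$ norm of $p$ (and $q$). One small point in your favor: your inductive argument for item \ref{it:local12} built on \cref{lem:multiplicationoperator}\ref{it:local9} makes explicit why the shift must have both $c_1\ge 0$ and $c_2\ge 0$ (not just $c_2\ge 0$, which is all one reads off directly from the projection formula), whereas the paper's assertion that the formula ``gives immediately'' item \ref{it:local12} glosses over this. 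Your remark that the composition $m_{x_1}^{\beta_1}\circ\cdots\circ m_{x_n}^{\beta_n}$ is order-independent (so $m_a$ is unambiguous) is also a nice added detail.
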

\begin{proof}
    Item \cref{it:local17} holds by construction of $m_a$ and \cref{lem:multiplicationoperator}. For all $(k,h)\in\Omega$ and $p,q\in\H^k$ we have that
    \begin{align*}
        (m_a)_{k,h}^{k',h'}(p\abs{x}^h+q\abs{x}^h\log\abs{x})\defeq \Pi_{k',h'-h}(a_{k'+h'-k-h}\cdot p)\abs{x}^h + \Pi_{k',h'-h}(a_{k'+h'-k-h}\cdot q)\abs{x}^h\log\abs{x},
    \end{align*}
    where the projections $\Pi_{k',h'}$ were defined in \cref{thm:harmonicdecomposition}. This formula gives immediately item \cref{it:local12}. In order to prove item \cref{it:local13} we exploit the fact that $\Pi_{k',h'}$ are $L^2(\partial B_1)$-projectors:
    \begin{align}\label{eq:multiplicationdefinition}
        \norm{\Pi_{k',h'-h}(a_{k'+h'-k-h}\cdot p)}_{L^2(\partial B_1)}^2 & +\norm{\Pi_{k',h'-h}(a_{k'+h'-k-h}\cdot q)}_{L^2(\partial B_1)}^2 \nonumber \\
        &\leq \norm{a_{k'+h'-k-h}}_{L^\infty(\partial B_1)}^2(\norm{p}_{L^2(\partial B_1)}^2+\norm{q}_{L^2(\partial B_1)}^2).
    \end{align}
\end{proof}
\begin{remark}\label{rmk:multipicantionpowerseries}
    Notice that the algebraic construction of $m_a\colon \H\to\H$ can be carried out even when $d=\infty$ (i.e., $a$ is a formal power series), defining $(m_a)_{k,h}^{k',h'}$ by \cref{eq:multiplicationdefinition}. While properties \cref{it:local12,it:local13} are kept also in this generality (they do not depend on $d$), property \cref{it:local17} holds only under further decay assumptions on the sequence  $\{\norm{a_\ell}_{L^\infty(\partial B_1)}\}_{\ell\geq 0}$ (see item \cref{it:multiplicationanalytic} in \cref{prop:convergenceaspects}).
\end{remark}
\begin{lemma}[Differentiation by $\partial_\alpha$]\label{lem:derivative}
    For each $\alpha\in\{1,\ldots,n\}$ there exist a linear operator $\partial_\alpha\colon \H^{\bullet,\bullet}\to \H^{\bullet,\bullet}$ such that for all $(k,h)\in\Omega$ we have
    \begin{enumerate}[ref=(\arabic*)]
        \item\label{it:local14} $\partial_\alpha f\in \H^{k-1,h}\oplus\H^{k+1,h-2}$ for $f\in\H^{k,h}$ (with the convention $\H^{-1,h}=\{0\}$);
        \item\label{it:local15}$\norm{\partial_\alpha}_{\mathscr{L}(\H^{k,h}\to\H^{k-1,h})}\leq C(n)\jap{\max\{k,\abs{h}\}}$ and $\norm{\partial_\alpha}_{\mathscr{L}(\H^{k,h}\to\H^{k+1,h-2})}\leq C(n)\jap{h}$;
        \item\label{it:local16} for any $f\in\H^{k,h}$ with $k+h>-(n-1)$, $\partial_\alpha f$ coincides with the distributional derivative.
        \item\label{it:local19} for any $f\in\H^{k,h}$ with $k+h=-(n-1)$, for any $\beta\in\{1,\ldots,n\}$ we have that $x_\beta\partial_\alpha f =m_{x_\beta}\partial_\alpha f$ as $L^1_{loc}$ functions, where $m_{x_\beta}\partial_\alpha f\in \H^{k-2,h+2}\oplus\H^{k,h}\oplus\H^{k+2,h-2}$ is defined thanks to \cref{rmk:multiplicationcritical}.
    \end{enumerate}
\end{lemma}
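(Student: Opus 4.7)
The plan is to define $\partial_\alpha$ by mimicking the classical Leibniz rule at the algebraic level. For $p\in\H^k$ and any $h\in\Z$, the pointwise identity
\begin{equation*}
    \partial_\alpha(p\abs{x}^h) = (\partial_\alpha p)\abs{x}^h + h(x_\alpha p)\abs{x}^{h-2},
\end{equation*}
combined with $\partial_\alpha p\in\H^{k-1}$ (since $\lapl$ and $\partial_\alpha$ commute) and with the harmonic decomposition $x_\alpha p = r + \frac{\partial_\alpha p}{2k+n-2}\abs{x}^2$ from \cref{eq:multiplicationdecomposition} (with $r\in\H^{k+1}$), yields the algebraic definition
\begin{equation*}
    \partial_\alpha(p\abs{x}^h) \defeq \frac{2k+h+n-2}{2k+n-2}(\partial_\alpha p)\abs{x}^h + h\,r\abs{x}^{h-2}\in\H^{k-1,h}\oplus\H^{k+1,h-2}.
\end{equation*}
An analogous treatment of the logarithmic piece $q\abs{x}^h\log\abs{x}$ (which produces, besides the natural log-analog of the above, an extra non-logarithmic term $(x_\alpha q)\abs{x}^{h-2}$ coming from $\partial_\alpha\log\abs{x} = x_\alpha/\abs{x}^2$) extends the construction to $h\in\{0,2,4,\ldots\}$ and proves item \cref{it:local14}; one verifies by inspection that the potentially problematic log-piece landing in the $\H^{k+1,h-2}$ component carries a factor $h$ and therefore vanishes precisely when the target $\H^{k+1,-2}$ has no logarithmic slot (i.e.\ when $h=0$). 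Item \cref{it:local15} then follows from the coefficient estimate $\abs{2k+h+n-2}/(2k+n-2)\leq C\jap{\max\{k,\abs{h}\}}/\jap{k}$, the Bernstein-type bound $\norm{\partial_\alpha p}_{L^2(\partial B_1)}\leq Ck\norm{p}_{L^2(\partial B_1)}$ for $p\in\H^k$ (a consequence of the eigenfunction structure of spherical harmonics), and the norm estimate $\norm{r}\leq\norm{p}$ from \cref{lem:multiplicationoperator}.

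For item \cref{it:local16}, the strict inequality $k+h>-(n-1)$ makes both $f$ (growing as $\abs{x}^{k+h}$) and its pointwise gradient (growing as $\abs{x}^{k+h-1}$) locally integrable at the origin, so $f\in W^{1,1}_{loc}(\R^n)$; the algebraic $\partial_\alpha f$ then coincides with the pointwise derivative on $\R^n\setminus\{0\}$ and hence with the distributional derivative.

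For the borderline case $k+h=-(n-1)$ treated in item \cref{it:local19}, the natural tool is the commutator identity $x_\beta\partial_\alpha f = \partial_\alpha(x_\beta f) - \delta_{\alpha\beta}f$, which holds distributionally because $f\in L^1_{loc}$ and $x_\beta$ is smooth. Replacing $x_\beta f$ with $m_{x_\beta}f\in\H^{k+1,h}\oplus\H^{k-1,h+2}$ (by \cref{it:local11} in \cref{lem:multiplicationoperator}) and observing that both of these components satisfy $(k+1)+h=(k-1)+(h+2)=-(n-2)>-(n-1)$, item \cref{it:local16} lets us identify $\partial_\alpha(m_{x_\beta}f)$ with the algebraic operator $\partial_\alpha\circ m_{x_\beta}$ applied to $f$. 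It then remains to verify the purely algebraic Leibniz identity
\begin{equation*}
    \partial_\alpha\circ m_{x_\beta} - \delta_{\alpha\beta}\id = m_{x_\beta}\circ\partial_\alpha \qquad\text{on }\H^{k,h}:
\end{equation*}
both sides land in $\H^{k-2,h+2}\oplus\H^{k,h}\oplus\H^{k+2,h-2}\subset L^1_{loc}$ and agree pointwise on $\R^n\setminus\{0\}$ by the classical smooth Leibniz rule, so they coincide as $L^1_{loc}$ functions, which concludes the proof. The main obstacle I anticipate is the careful bookkeeping of the logarithmic contributions in item \cref{it:local14}, especially across the transition at $h=0$ where the target space $\H^{k+1,h-2}$ changes regime; but this is a finite explicit verification rather than a conceptual difficulty.
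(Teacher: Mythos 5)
Your treatment of items (1)--(3) is essentially the paper's: $\partial_\alpha$ is defined via the pointwise Leibniz rule on $\R^n\setminus\{0\}$ together with the harmonic split from \cref{eq:multiplicationdecomposition}, the norm bounds follow from the coefficient growth plus the spherical-harmonic identity $\norm{\nabla p}_{L^2(\partial B_1)}^2=k(2k+n-2)\norm{p}_{L^2(\partial B_1)}^2$, and $W^{1,1}_{loc}$-regularity of $\H^{k,h}$ for $k+h>-(n-1)$ gives item (3). (The degenerate case $n=2$, $k=0$, where $2k+n-2=0$ makes your explicit coefficient undefined, is harmless since then $\partial_\alpha p=0$.) For item (4), however, you take a genuinely different route. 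The paper shows directly that the distribution $x_\beta\partial_\alpha(p\abs{x}^h)$ is regular by cutting out $B_\eps$, integrating by parts, and verifying that the boundary term on $\partial B_\eps$ is $O(\eps)$. You instead invoke the distributional identity $x_\beta\partial_\alpha f=\partial_\alpha(x_\beta f)-\delta_{\alpha\beta}f$, replace $x_\beta f$ with $m_{x_\beta}f\in\H^{k+1,h}\oplus\H^{k-1,h+2}$ whose components have $k'+h'=-(n-2)>-(n-1)$, feed these into the already-proved item (3), and close by matching the algebraic and pointwise Leibniz rules through the uniqueness of the $\H^{k,h}$-decomposition. This makes item (4) a corollary of item (3) rather than requiring a second, independent boundary-term estimate, which is slightly slicker; the paper's version has the merit of being self-contained and not stacking items. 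The one step worth making explicit in your write-up: the operator $m_{x_\beta}\circ\partial_\alpha$ applies $m_{x_\beta}$ to components of $\partial_\alpha f$ lying on the critical line $k'+h'=-n$ (outside $\Omega$), and this relies on the extension of $m_{x_\beta}$ provided by \cref{rmk:multiplicationcritical}, which the statement of the lemma itself already points to but which you should cite.
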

\begin{proof}
    For all $p,q\in\H^k$ and $(k,h)\in\Omega$, we have the following pointwise identity in $\R^n\setminus\{0\}$
    \begin{align*}
        \partial_\alpha(p\abs{x}^h+q\abs{x}^h\log\abs{x}) = \underbrace{\partial_\alpha p\abs{x}^h+\partial_\alpha q\abs{x}^h\log\abs{x}}_{\in \H^{k-1,h}} + \underbrace{m_{x_\alpha}((hp+q)\abs{x}^{h-2}+hq\abs{x}^{h-2}\log\abs{x})}_{\in \H^{k+1,h-2}\oplus \H^{k-1,h}} .
    \end{align*}
    We use this formula to define $\partial_\alpha$, hence \cref{it:local14} holds. Since $\H^{k,h}\subset W^{1,1}_{loc}(\R^n)$ when $k+h>-(n-1)$, this formula also holds in the distributions sense in the full $\R^n$, so item \cref{it:local16} holds. The bound on the norm now follows from \cref{lem:multiplicationoperator} and the fact that
    \begin{equation*}
        \norm{\nabla p}_{L^2(\partial B_1)} \leq C(n)\norm{p}_{L^2(\partial B_1)}\text{ for all }p\in\H^k,
    \end{equation*}
    which is a consequence of the identity
    \begin{equation*}
        \frac{1}{n+2(k-1)}\int_{\partial B_1}\abs{\nabla p}^2=\int_{B_1}\abs{\nabla p}^2 = \int_{\partial B_1} (x\cdot \nabla p) p =k\int_{\partial B_1} p^2,\quad \text{ for all } p\in\H^k.
    \end{equation*}
    Finally, let us check \cref{it:local19}. Since for all $x\neq 0$ we have $x_\beta\partial_\alpha f(x) =m_{x_\beta}\partial_\alpha f(x)$ by construction, it suffices to show that the distribution $x_\beta\partial_\alpha f$ is regular (i.e., it is represented by an $L^1_{loc}$ function). This is readily checked, for all $\varphi\in C^\infty_c(\R^n)$ and $q\in\H^{k}$ with $k+h=-(n-1)$ we have
    \begin{align*}
        \left\langle x_\beta\partial_\alpha\left(p \abs{x}^{h}\right),\varphi\right\rangle&=-\lim_{\eps\downarrow 0}\int_{\{\abs{x}>\eps\}} \partial_\alpha(x_\beta\varphi)\abs{x}^h\, dx\\
        &=\lim_{\eps\downarrow 0}\int_{\{\abs{x}>\eps\}}\varphi\underbrace{ m_{x_\beta}\partial_\alpha\left(p\abs{x}^h\right)}_{\in L^1_{loc}(\R^n)}-\lim_{\eps\downarrow 0}\underbrace{\int_{\partial B_\eps}x_\alpha x_\beta p \abs{x}^{h-1}\varphi\, d\sigma}_{=O(\eps^2\eps^{k+h-1}\eps^{n-1})=O(\eps)}\\
        &=\int_{\R^n}m_{x_\beta}\partial_\alpha\left(p\abs{x}^h\right)\varphi\,dx,
    \end{align*}
    and the same computation works for $q\abs{x}^h\log\abs{x}$, therefore 
    $$\left\langle x_\beta\partial_\alpha\big(p\abs{x}^h +q \abs{x}^{h}\log\abs{x}\big),\varphi\right\rangle = \left\langle m_{x_\beta}\partial_\alpha f,\varphi\right\rangle.$$
\end{proof}
What we have done so far is mostly algebraic, let us consider the convergence aspects.
\begin{lemma}
    Let $f\in\H^{k,h}$ for $(k,h)\in\Omega$ such that
    \begin{equation}\label{eq:supportbound}
        \frac{\jap{k+\abs{h}}}{\jap{k+h}}\leq K,
    \end{equation}
    for some number $K>0$. Then for all $\ell \geq 0$ with $k+h>\ell-n/2$ and $r\in(0,1)$ we have:
    \begin{equation}\label{eq:sobolevconvergence}
        \norm{\nabla^\ell f}_{L^{2}(B_r)}\leq C r^{k+h-\ell+\frac n 2}(1+\abs{\log r})\jap{k+h}^\ell\norm{f},
    \end{equation}
    for some $C=C(n,\ell,K)$.
\end{lemma}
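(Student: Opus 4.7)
The plan is to induct on $\ell \geq 0$. The base case $\ell = 0$ will be a direct computation in polar coordinates, and the inductive step will reduce $\ell+1$ derivatives to $\ell$ derivatives by means of \cref{lem:derivative}.

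For the base case, I write $f = p\abs{x}^h + q\abs{x}^h\log\abs{x}$ (with $q = 0$ when $h\notin\{0,2,\dots\}$) and pass to $x = \rho\theta$ so that $f(\rho\theta) = \rho^{k+h}(P(\theta) + Q(\theta)\log\rho)$, where $P, Q$ denote the restrictions of $p, q$ to $S^{n-1}$. Expanding the square and applying Cauchy-Schwarz to the cross term,
\begin{equation*}
\norm{f}_{L^2(B_r)}^2 \leq \int_0^r \rho^{\alpha-1}\Bigl(\norm{P}_{L^2(S^{n-1})}^2 + 2\norm{P}\norm{Q}\abs{\log\rho} + \norm{Q}^2(\log\rho)^2\Bigr)\,d\rho,
\end{equation*}
where $\alpha \defeq 2(k+h)+n$. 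Since $k, h$ are integers and the hypothesis gives $k+h > -n/2$, $\alpha$ is a positive integer, hence $\alpha \geq 1$, and the elementary estimate $\int_0^r \rho^{\alpha-1}(1+\abs{\log\rho})^2\,d\rho \leq C(n)\,r^\alpha(1+\abs{\log r})^2$ (proven by integrating by parts twice) yields the desired bound after taking square roots.

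For the inductive step, assume the estimate at level $\ell$ and consider $\nabla^{\ell+1}f = \nabla^\ell(\nabla f)$. By \cref{lem:derivative}, for each $i\in\{1,\ldots,n\}$ I decompose $\partial_i f = g^{(1)}_i + g^{(2)}_i$ with $g^{(1)}_i \in \H^{k-1,h}$ and $g^{(2)}_i \in \H^{k+1,h-2}$, and $\norm{g^{(j)}_i}\leq C(n)\jap{k+\abs{h}}\norm{f}\leq C(n)K\jap{k+h}\norm{f}$ by \cref{eq:supportbound}. Both components live at parameters $(k',h')$ with $k'+h' = k+h-1 > \ell - n/2$, and an elementary case analysis shows that $(k',h')$ still satisfies \cref{eq:supportbound} with a new constant $K' = K'(K,n,\ell)$. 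Applying the inductive hypothesis at level $\ell$ with parameter $K'$ gives
\begin{equation*}
\norm{\nabla^\ell g^{(j)}_i}_{L^2(B_r)} \leq C\,r^{k+h-1-\ell+n/2}(1+\abs{\log r})\,\jap{k+h-1}^\ell \norm{g^{(j)}_i},
\end{equation*}
and combining this with the norm bound on $g^{(j)}_i$ and summing over $i,j$ closes the induction.

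The main technical hurdle is verifying that \cref{eq:supportbound} propagates through a derivative with a controlled change of constant. When $k+h$ is large, the ratio $\jap{k+\abs{h}}/\jap{k+h}$ changes only by a factor $O(1)$ after one differentiation, so $K'$ can be taken a fixed multiple of $K$. When $k+h$ is close to the threshold $\ell - n/2$, the constraint \cref{eq:supportbound} forces $k+\abs{h}$ to be bounded in terms of $K,n,\ell$, leaving only finitely many configurations of $(k,h)$ which can be handled by direct inspection and absorbed into the constant. A secondary bookkeeping point is the logarithmic factor: since $(1+\abs{\log r})^j \leq (1+\abs{\log r})^2$ for $j\leq 2$, taking square roots produces the single factor $(1+\abs{\log r})$ stated in \cref{eq:sobolevconvergence}.
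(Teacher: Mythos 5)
Your proof is correct and rests on the same two pillars as the paper's: the polar-coordinate estimate for $\ell=0$ and the decomposition $\partial_i\colon\H^{k,h}\to\H^{k-1,h}\oplus\H^{k+1,h-2}$ from \cref{lem:derivative}. The only organizational difference is that you set up an explicit induction on $\ell$, whereas the paper applies the base-case estimate directly to the image of the $\ell$-fold composition $\partial_{\alpha_1}\circ\cdots\circ\partial_{\alpha_\ell}$, using that this composition has at most $\ell+1$ nonzero destination components, all with $k'+h'=k+h-\ell$, and operator norm bounded by $C\jap{\max\{k,\abs{h}\}}^\ell$. One small remark on your ``technical hurdle'': the case split you describe is not actually needed. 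A short computation using $\jap{m-1}\geq\frac12\jap{m}$ and $\abs{h-2}\leq\abs{h}+2$ shows that for both targets $(k-1,h)$ and $(k+1,h-2)$ one has $\jap{k'+\abs{h'}}/\jap{k'+h'}\leq 8K$ uniformly in $(k,h)\in\Omega$, so the constant propagates by a universal factor and no boundary-regime analysis is required.
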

\begin{proof}
   For all $p,q\in\H^k$ and $0<r<1$, we have the following inequality
    \begin{align*}\label{eq:norm_on_B_r}
        \norm{p(x)\abs{x}^h}_{L^2(B_r)}&+\norm{q(x)\abs{x}^h\log\abs{x}}_{L^2(B_r)} \\ 
        & \leq 2 r^{\frac n 2+k+h}(1+\abs{\log r})\left(\int_0^1 s^{n+2k+2h}(1+\log^2(s))\frac{ds}{s}\right)^{\frac12}\left(\norm{p}+\norm{q}\right) \\
        & \leq C r^{\frac{n}{2}+k+h}(1+\abs{\log r})(\norm{p}_{L^2(\partial B_1)}+\norm{q}_{L^2(\partial B_1)}),
    \end{align*}
    where $C=C(n)$, as long as $k+h>-n/2$. Let us now estimate any derivative of order $\ell\leq k+h+n/2$:
    \begin{align*}
        \norm{\partial_{\alpha_1\ldots\alpha_\ell} f_{k,h}}_{L^2(B_r)} & \leq \sum_{k',h'} \norm{(\partial_{\alpha_1}\circ\ldots\circ \partial_{\alpha_\ell})^{k',h'}_{k,h}f_{k,h}}_{L^2(B_r)}\\
        & \leq (\ell+1) \max_{k'+h'=k+h-\ell} C(n)r^{\frac{n}{2}+k'+h'}(1+\abs{\log r})\norm{(\partial_{\alpha_1}\circ\ldots\circ \partial_{\alpha_\ell})^{k',h'}_{k,h}f_{k,h}}\\
        & \leq C(n,\ell) r^{\frac{n}{2}+k+h-\ell}(1+\abs{\log r})\jap{\max\{k,\abs{h}\}}^\ell\norm{f_{k,h}}\\
        & \leq C(n,\ell,K)r^{k+h}\jap{k+h}^\ell\norm{f_{k,h}}
    \end{align*}
    where we used \cref{lem:derivative} in order to ensure that $(\partial_{\alpha_1}\circ\ldots\circ \partial_{\alpha_\ell})^{k',h'}_{k,h}\neq 0$ for at most $\ell+1$ values of $(k',h')$, for which we also necessarily have $k'+h'=k+h-\ell$.
\end{proof}
\begin{proposition}\label{prop:convergenceaspects}
    Take $f\in\H^{\bullet,\bullet}$ and set $f_{k,h}\defeq\pi_{k,h} f$. Assume that
    \begin{equation}\label{eq:summability assumptions}
        \sum_{(k,h)\in\Omega}\norm{f_{k,h}}<\infty\text{ and there exists }K > 0\text{ such that if }f_{k,h}\neq 0\text{ then }\frac{\jap{k+\abs{h}}}{\jap{k+h}}\leq K.
    \end{equation}
    Then the formal sum defining $f$ is absolutely convergent in the $C^\infty(B_1)$ metric, more precisely for all $\ell\geq 0$ and $r\in(0,1)$
    \begin{equation*}
        \sum_{\substack{(k,h)\in\Omega\\ k+h\leq \ell}}f_{k,h}
        \text{ has finitely many terms and } 
        \sum_{\substack{(k,h)\in\Omega\\k+h>\ell}}\norm{f_{k,h}}_{C^\ell(B_r)}<C(n,\ell,K,r)\sum_{\substack{(k,h)\in\Omega\\k+h>\ell}}\norm{f_{k,h}}.
    \end{equation*} 
    Hence, as a function, $f\in L^1_{loc}(B_1)\cap C^\infty(B_1\setminus \{0\})$. 
    Moreover, the operations we described can be carried out term by term:
    \begin{enumerate}[ref=(\arabic*)]
        \item If $f_{i,-i-(n-1)}=0$ for all $i\geq 0$, then as a function $f\in W^{1,1}_{loc}(B_1)$, furthermore $\partial_\alpha f$ (as an element of $\H^{\bullet,\bullet}$) satisfies the assumptions \cref{eq:summability assumptions} and coincides with the distributional derivative of $f$. 
        \item\label{it:multiplicationanalytic} Let $(a_\ell)_{\ell\ge 0}$ be a sequence of homogeneous polynomials so that $a_\ell$ has degree $\ell$ and there are $C_a>0$, $\delta_a\in(0,1)$ such that $\norm{a_\ell}_{L^\infty(\partial B_1)}\leq C_a \delta_a^\ell$ for all $\ell\geq 0$. If $a\defeq\sum_{\ell\geq 0}a_\ell$, then $m_a f$ (which was algebraically defined in \cref{rmk:multipicantionpowerseries}) satisfies \cref{eq:summability assumptions} and we have $a\cdot  f = m_a f$ as functions.
        \item The assumptions \cref{eq:summability assumptions} hold for $u\defeq \lapl^{-1}f\in\H^{\bullet,\bullet}$, moreover $u\in W^{2,1}_{loc}(B_1)$ and $\lapl u = f$ in the distributional sense.
    \end{enumerate}
\end{proposition}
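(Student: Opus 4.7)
My plan is to use the preceding $L^2$ lemma combined with Sobolev embedding to get the $C^\ell$ bound, and then verify each term-by-term claim using the operator-norm bounds from \cref{lem:laplacianoperator,lem:inverse_lapl,lem:mul_anal,lem:derivative}.

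For the main convergence claim, the ``finitely many terms'' statement is immediate: under \cref{eq:summability assumptions}, $k+h \leq \ell$ forces $k + \abs{h} \leq K\ell + K$, so only finitely many $(k,h)\in\Omega$ contribute. For the $C^\ell(B_r)$ estimate I would fix an intermediate radius $r_1 \in (r,1)$, apply the previous $L^2$ lemma to bound $\norm{\nabla^{\ell+m}f_{k,h}}_{L^2(B_{r_1})}$ with $m=\lfloor n/2\rfloor+1$, and invoke the Sobolev embedding $W^{\ell+m,2}(B_{r_1}) \hookrightarrow C^\ell(\overline{B_r})$. Since $r_1<1$, the factor $r_1^{k+h-\ell-m+n/2}\jap{k+h}^{\ell+m}(1+\abs{\log r_1})$ stays uniformly bounded as $k+h$ varies in $\{k+h>\ell\}$, and summing over $(k,h)$ yields the claimed estimate. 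The $L^1_{loc}(B_1)$ and $C^\infty(B_1\setminus\{0\})$ conclusions then follow respectively from the $\ell=0$ case and from rerunning the argument on each compact annulus avoiding the origin.

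For the three term-by-term operations, the strategy is to verify that each operator preserves \cref{eq:summability assumptions} and that the algebraic identity translates into a distributional one. For multiplication by an analytic $a=\sum_\ell a_\ell$, \cref{lem:mul_anal} gives $\norm{(m_a)_{k,h}^{k',h'}}\leq C_a\delta_a^{k'+h'-k-h}$ and restricts the shift to the cone from item (1); combined with the exponential decay this preserves summability and the ratio bound (worsening only by a controlled constant), while $a\cdot f=m_a f$ follows termwise from item (3) together with absolute convergence. For $\lapl^{-1}$, the operator-norm bound from \cref{lem:inverse_lapl} tends to $0$ as $k+h\to\infty$, so summability and the ratio bound pass through (with $k+h$ raised by $2$); the $W^{2,1}_{loc}$ regularity is the $\ell=2$ case of the main estimate, and $\lapl\lapl^{-1}f=f$ as distributions follows by applying \cref{lem:laplacianoperator} termwise and passing to the limit in the partial sums. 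For $\partial_\alpha$, the hypothesis $f_{i,-i-(n-1)}=0$ ensures every surviving term satisfies $k+h>-(n-1)$, so \cref{lem:derivative} lets us differentiate each $f_{k,h}$ distributionally; testing against $\varphi\in C^\infty_c(B_1)$ and using $C^1$-convergence of partial sums on compact subsets of $B_1\setminus\{0\}$ together with the $L^1_{loc}$ bound near the origin identifies $\partial_\alpha f$ with the distributional derivative.

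The delicate step I expect is checking that the formal sum defining $\partial_\alpha f$ really satisfies \cref{eq:summability assumptions}: the raw operator-norm bound in \cref{lem:derivative} grows like $\jap{k+\abs{h}}\le K\jap{k+h}$, which is not immediately summable against $\norm{f_{k,h}}$. The way to bypass this is to first secure the $C^\ell$ convergence of the partial sums (from the main estimate), deduce that $f\in W^{1,1}_{loc}$, and then interpret $\partial_\alpha f$ as the distributional derivative and identify it with the formal sum \emph{a posteriori}; the hypothesis $f_{i,-i-(n-1)}=0$ removes precisely the borderline terms (see item (4) of \cref{lem:derivative}) where the formal and distributional derivatives would otherwise disagree.
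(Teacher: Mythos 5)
Your main estimate follows the paper's route: combine the preceding $L^2$ lemma with Sobolev embedding. The intermediate-radius device is a clean way to organize this; just note that when $n$ is even, the choice $m=\lfloor n/2\rfloor+1$ requires $k+h>\ell+1$ in the $L^2$ lemma, so the single layer $k+h=\ell+1$ (finitely many terms by the support hypothesis) must be absorbed directly into the constant. Your plan for items (2) and (3) is the paper's plan as well (the paper only writes out item (2), verifying summability via the geometric decay of $\norm{a_\ell}_{L^\infty}$ and the support bound via the equivalent reformulation $h\geq -ck-n$).

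You correctly identified the delicate point in item (1): the operator norms of $\partial_\alpha$ grow like $\jap{k+\abs{h}}\le K\jap{k+h}$, so the naive estimate gives $\sum_{k',h'}\norm{(\partial_\alpha f)_{k',h'}}\lesssim \sum_{k,h}\jap{k+h}\norm{f_{k,h}}$, which need not be finite under \cref{eq:summability assumptions}. But your proposed fix does not close this gap: identifying $\partial_\alpha f$ with the distributional derivative and proving $f\in W^{1,1}_{loc}$ establishes the last two claims of item (1), not the first. In fact, the summability of $\partial_\alpha f$ genuinely fails under the stated hypotheses; take e.g.\ $n\ge 3$, $f_{k,0}\in\H^{k,0}$ a rescaled copy of $\operatorname{Re}(x_1+ix_2)^k$ with $\norm{f_{k,0}}=k^{-3/2}$, so $\sum\norm{f_{k,0}}<\infty$ and the ratio equals $1$, yet $\norm{(\partial_1 f)_{k-1,0}}\sim k^{-1/2}$ is not summable. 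The proposition's item (1) as written is thus slightly too strong; the correct reading is that the \emph{support} condition (second half of \cref{eq:summability assumptions}) passes to $\partial_\alpha f$, and the distributional identity holds, which is all that the paper's later arguments require since the relevant series there (such as $\sum_\ell T^\ell\delta$) decay geometrically in $k+h$, automatically restoring summability after one derivative. So flag this as an imprecision in the statement rather than try to prove the summability as literally written.
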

\begin{proof}
    We start remarking that under assumption \cref{eq:summability assumptions} we have:
    \begin{equation*}
        \abs{\{(k,h)\in\Omega: k+h\leq \ell, f_{k,h}\neq 0\}}\leq C(K) \jap{\ell}^2\text{ for all }\ell>-n,
    \end{equation*}
    in particular any partial sum $f_\ell:=\sum_{k+h\leq \ell}f_{k,h}\in C^\infty(B_1\setminus\{0\})\cap L^1_{loc}(B_1)$ is finite. Now combining the Sobolev embedding with \cref{eq:sobolevconvergence}, and using that $\sup_{k+h\geq \ell}r^{k+h}\jap{k+h}^\ell<\infty$ we find for all $p>\ell$ and $r\in(0,1)$:
    \begin{equation*}
        \sum_{\substack{(k,h)\in\Omega\\ k+h=p}}\norm{f_{k,h}}_{C^p(B_r)}
        \leq 
        C(n,\ell,K,r)\sum_{\substack{(k,h)\in\Omega\\ k+h=p}}\norm{f_{k,h}}.
    \end{equation*}
    This proves the existence of a function $\widetilde f\in C^\infty(B_1\setminus\{0\})\cap L^1_{loc}(B_1)$ such that
    \begin{equation*}
        \widetilde f(x) = \sum_{(k,h)\in\Omega}f_{k,h}(x)\text{ for all }x\neq 0.
    \end{equation*}
    
    We remark that we can explicitly recover the $f_{k,h}$ from $\widetilde f$ first doing a blow-up (scaling by $r^\ell$ or $r^\ell\log r$) to recover the different homogeneous parts and then exploiting the orthogonality of the $\H^{k}$ to split the polynomials into their harmonic pieces.
    
    Let us prove just item \cref{it:multiplicationanalytic}, as the others are very similar and simpler. We check first that $m_a f$ satisfies both assumptions in \cref{eq:summability assumptions}. Let us start with the summability, for all fixed $(k,h)\in\Omega$ it holds (the pairs $(k',h')$ are implicitly in $\Omega$)
    \begin{align*}
        \sum_{k',h'}\norm{(m_a)_{k,h}^{k',h'}f_{k,h}} & \leq \norm{f_{k,h}} \sum_{k',h'}\norm{a_{k'+h'-k-h}}_{L^\infty(\partial B_1)}\\
        &\leq C_a \norm{f_{k,h}}\sum_{\ell\geq k+h} \ 
        \sum_{\substack{(k'-k,h'-k)\in \N(1,0)+\N(-1,2)\\k'+h'=\ell}} \delta^{k'+h'-k-h}\\
        &\leq C_a\norm{f_{k,h}}\sum_{\ell'\geq 0}(\ell'+1)\delta^{\ell'}=C_a C(\delta_a)\norm{f_{k,h}},
    \end{align*}
    where we used that, thanks to \cref{lem:mul_anal}, 
    \begin{equation*}
        \{(k',h')\in\Omega:\, k'+h'=\ell\}\cap \big((k,h)+\N(1,0)+\N(-1,2)\big)     
    \end{equation*}
    has at most $\ell-(k+h)+1$ elements. 
    Therefore, summing this inequality over $(k,h)\in\Omega$, we find
    \begin{align*}
        \sum_{(k,h)\in\Omega}\sum_{(k',h')\in\Omega}\norm{(m_a)_{k,h}^{k',h'}f_{k,h}}\leq C(C_a,\delta_a)\sum_{(k,h)\in\Omega}\norm{f_{k,h}}<\infty.
    \end{align*}
    
    We now check that the second condition in \cref{eq:summability assumptions} holds for $m_a f$. We first give an equivalent characterization of this condition: let $E\subset \Omega$ be any set, then
    \begin{equation}\label{eq:supportassumption}
        \sup_{E}\frac{\jap{k+\abs{h}}}{\jap{k+h}} < \infty\Leftrightarrow \text{ exists }c<1\text{ such that } h\geq -ck-n\text{ for all }(k,h)\in E.
    \end{equation}
    Take any $(k,h)\in\Omega$ such that $(m_af)_{k,h}\neq 0$, then by \cref{lem:mul_anal} there must exist $(\bar k,\bar h) \in\Omega$ with $(k-\bar k, h-\bar h)\in \mathcal C$ and $f_{\bar k,\bar h}\neq 0$. In particular by \cref{eq:supportassumption}, for some $c_f<1$, we have $\bar h>-c_f\bar k -n$, so one finds
    \begin{equation*}
        h =(h-\bar h)+\bar h\geq (\underbrace{h-\bar h}_{\geq 0})+\delta_f(k-\bar k)-\delta_f k-n\geq \delta_f(\underbrace{k+h-\bar k-\bar h}_{\geq 0})-\delta_f k-n\geq -\delta_f k-n.
    \end{equation*}
    As the only assumption on $(k,h)$ was that $(m_a f)_{k,h}\neq 0$, this proves that \cref{eq:supportassumption} holds for $m_a f$.
    
    To conclude we observe that by \cref{lem:mul_anal}, for all fixed $N>1$, we have
    $$
    \sum_{0\leq \ell\leq N}a_\ell\sum_{k+h\leq N}f_{k,h}=\sum_{k+h\leq 2N} \pi_{k,h}(m_a f)\text{ as functions,}
    $$
    then letting $N\to\infty$ and using what we already proved so far we find $a\widetilde f=\widetilde{m_a f}$.
\end{proof}

\section{Combinatorial bounds for weighted paths in \texorpdfstring{$\Z^2$}{Z}}
In this section we study the paths of some directed weighted graphs on $\Z^2$ as they will be used to encode the norm of (iterations of) linear operators from $\H^{\bullet,\bullet}$ into $\H^{\bullet,\bullet}$.
Given a point $p\in\Z^2$, we denote with $p_x$ and $p_y$ its coordinates, namely $p=(p_x,p_y)$.

\begin{definition}\label{def:graph}
    A directed weighted $\Z^2$-graph $G$ is, formally, a map $G:\Z^2\times\Z^2\to[0,\infty)$; where $G(p, q)$ represents the weight on the edge from point $p$ to point $q$ ($G(p,q)=0$ is equivalent to the absence of the edge). 
    
    A path of length $\ell$ is a sequence of $\ell+1$ vertices $p_0,p_1,\dots, p_\ell$ such that $G$ contains the edges $p_i\to p_{i+1}$ for all $0\le i <\ell$ (i.e., $G(p_i, p_{i+1})>0$).
    The $G$-weight of the path is the product of the weights of the edges, that is $G(p_0,p_1)G(p_1,p_2)\cdots G(p_{\ell-1},p_\ell)$.
\end{definition}
\begin{definition}
    Let $G_1, G_2$ be two directed weighted $\Z^2$-graphs.
    
    The notation $G_1\le G_2$ shall be understood in a pointwise sense, i.e., $G_1(p, q)\le G_2(p, q)$ for any $(p, q)\in\Z^2\times\Z^2$.
    
    The sum of the two graphs, denoted by $G_1+G_2$, is the directed weighted $\Z^2$-graph such that $(G_1+G_2)(p, q) = G_1(p) + G_2(q)$.
    
    The product of the two graphs, denoted by $G_1\cdot G_2$, is the directed weighted $\Z^2$-graph such that (this sum will be absolutely convergent in all the cases we will consider)
    \begin{equation*}
        (G_1\cdot G_2)(p, q) = \sum_{r\in\Z^2} G_1(p, r)G_2(r, q) .
    \end{equation*}
    The product $\ell$ times of $G$ with itself is denoted with $G^\ell$.
\end{definition}

\begin{lemma}\label{lem:count_path}
    Let $G$ be a directed weighted $\Z^2$-graph.
    Assume that there is a $v_0\in \Z^2$ such that if $G(p,q)>0$ then $(q-p)\cdot v_0 > \abs{q-p}$ (i.e., the directions of the edges are contained in a cone strictly smaller than a half-plane).
    
    There is a constant $C_1=C_1(\abs{v_0})>0$ such that, for any $p,q\in\Z^2$, the number of paths from $p$ to $q$ using edges in $G$ is less than $C_1^{\abs{q-p}}$.
\end{lemma}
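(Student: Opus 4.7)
The plan is to reduce to a worst-case graph and then apply a generating-function argument. Let $C \defeq \{e \in \R^2 : e \cdot v_0 > \abs{e}\}$ be the open cone from the hypothesis. Replacing $G$ by the ``full cone graph'' $\bar G$ defined by $\bar G(p,q) \defeq 1$ whenever $q - p \in C \cap \Z^2$ (and $0$ otherwise) can only increase the number of paths between any two vertices, so it suffices to bound path counts in $\bar G$. By translation invariance, the number of paths in $\bar G$ from $p$ to $q$ depends only on $v \defeq q - p$, and equals
\begin{equation*}
    N(v) \defeq \sum_{\ell \ge 0} \#\big\{ (e_1, \ldots, e_\ell) \in \big(C \cap \Z^2 \setminus \{0\}\big)^\ell : e_1 + \cdots + e_\ell = v \big\}.
\end{equation*}

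For $\lambda \in (0,1)$ I would introduce the generating function
\begin{equation*}
    S(\lambda) \defeq \sum_{e \in C \cap \Z^2 \setminus \{0\}} \lambda^{e \cdot v_0}.
\end{equation*}
Every $e \in C \cap \Z^2 \setminus \{0\}$ satisfies $e \cdot v_0 > \abs{e} \ge 1$, which yields the $v_0$-independent comparison $S(\lambda) \le \sum_{v \in \Z^2 \setminus \{0\}} \lambda^{\abs{v}}$. The right-hand side converges for every $\lambda \in (0,1)$ and tends to $0$ as $\lambda \to 0^+$, because the number of lattice points in an annulus $\{r \le \abs{v} < r+1\}$ grows only linearly in $r$. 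I can therefore fix an absolute constant $\lambda_0 \in (0,1)$ with $S(\lambda_0) \le 1/2$. Expanding the geometric series of nonnegative terms gives
\begin{equation*}
    \sum_{v \in \Z^2} N(v)\,\lambda_0^{v \cdot v_0} \;=\; \sum_{\ell \ge 0} S(\lambda_0)^\ell \;\le\; 2,
\end{equation*}
and extracting the single term indexed by $v$ yields $N(v) \le 2 \lambda_0^{-v \cdot v_0}$.

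To conclude I would apply Cauchy--Schwarz in the form $v \cdot v_0 \le \abs{v}\abs{v_0}$, together with the monotonicity of $x \mapsto \lambda_0^{-x}$ (increasing because $\lambda_0 < 1$), to get $N(v) \le 2 \lambda_0^{-\abs{v}\abs{v_0}}$. Setting $C_1 \defeq 2\lambda_0^{-\abs{v_0}}$ then gives $N(v) \le C_1^{\abs{v}}$ for every $\abs{v} \ge 1$, and $N(0) = 1$ trivially since the cone condition forbids directed cycles. Note that $C_1$ depends only on $\abs{v_0}$, as required.

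The one point that genuinely requires the hypothesis is the comparison $S(\lambda) \le \sum_v \lambda^{\abs{v}}$: under a mere half-plane condition $e \cdot v_0 \ge 0$, the set $C \cap \Z^2$ would contain infinitely many lattice vectors with $e \cdot v_0 = 0$, and $S(\lambda)$ would diverge for every $\lambda \in (0,1)$, breaking the argument. The strict cone inequality closes this gap, and once it is in place the rest is routine manipulation of nonnegative generating functions.
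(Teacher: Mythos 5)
Your proof is correct, but it takes a genuinely different route from the paper's. The paper's argument is purely combinatorial: it constructs an integer linear injection $S\colon\Z^2\to\Z^2$ (explicitly $Sv\defeq v+2(v\cdot v_0)(1,1)$) that sends every admissible edge direction into $\N_{>0}\times\N_{>0}$, so that a path from $p$ to $q$ becomes an ordered decomposition of $Sv$ into positive lattice steps; this is then bounded by $\binom{(Sv)_x-1}{\ell-1}\binom{(Sv)_y-1}{\ell-1}\le 4^{\abs{Sv}}\le(4^{\abs{S}})^{\abs{v}}$ using $\binom{m}{k}\le 2^m$. Your argument replaces this composition count by a generating-function (exponential moment) estimate: the Tonelli identity $\sum_{v}N(v)\lambda^{v\cdot v_0}=\sum_{\ell\ge0}S(\lambda)^\ell$ between series of nonnegative terms, the choice of an absolute $\lambda_0$ with $S(\lambda_0)\le\tfrac12$, and the crude pointwise extraction $N(v)\le 2\lambda_0^{-v\cdot v_0}$ followed by Cauchy--Schwarz. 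In both arguments the strict cone condition $(q-p)\cdot v_0>\abs{q-p}$ is used in precisely one place — for you, to make $S(\lambda)$ finite and small via the comparison $S(\lambda)\le\sum_{v\neq0}\lambda^{\abs{v}}$; for the paper, to make the shear $S$ land in the open first quadrant — and both produce a $C_1$ depending on $v_0$ only through $\abs{v_0}$. Your route is shorter conceptually and avoids the explicit coordinate change, at the small cost of a convergence discussion; the paper's stays entirely within finite combinatorics. One cosmetic point (present in the paper's proof too): at $v=0$ the bound is an equality $1=C_1^0$, not a strict inequality, which is immaterial.
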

\begin{proof}
    Let $A\defeq \{q-p: G(p,q)>0\}$. We know that $a v_0\ge \abs{a}$ for any $a\in A$. Calling $v\defeq q-p$, the number of paths from $p$ to $q$ of length $\ell$ using edges in $G$ is bounded from above by
    \begin{equation*}
         \abs{\{(a_1,a_2,\dots,a_\ell)\in A^\ell:\, a_1+a_2+\cdots+a_\ell=v\}} ,
    \end{equation*}
    hence it is sufficient to control this quantity.
    Thanks to the assumption involving $v_0$, there is an integer injective linear transformation $S:\Z^2\to\Z^2$ such that $SA \subset \N_{>0}\times \N_{>0}$ (where $\N_{>0}\defeq \N\setminus\{0\}$).
    One can check that the transformation $Sv\defeq v + 2(v\cdot v_0)\, (1, 1)$ satisfies the assumptions.
    
    Given $(a_1,a_2,\dots, a_\ell)\in A^\ell$ with $a_1+a_2+\cdots+a_\ell=v$, we have $Sa_1+Sa_2+\cdots+Sa_\ell=Sv$ and since the map is injective we deduce that
    \begin{align*}
        &\abs{\{(a_1,a_2,\dots,a_\ell)\in A^\ell:\, a_1+a_2+\cdots+a_\ell=v\}} \\
        &\quad\le
        \abs{\{(b_1,b_2,\dots,b_\ell)\in (\N_{>0}\times\N_{>0})^\ell:\, b_1+b_2+\cdots+b_\ell=Sv\}} \\
        &\quad\le
        \abs{\{(x_1,x_2,\dots,x_\ell)\in \N_{>0}^\ell:\, x_1+x_2+\cdots+x_\ell=(Sv)_x\}}\times \\
        &\quad\quad\ \abs{\{(y_1,y_2,\dots,y_\ell)\in \N_{>0}^\ell:\, y_1+y_2+\cdots+y_\ell=(Sv)_y\}}
        \\
        &\quad = \binom{(Sv)_x-1}{\ell-1}\binom{(Sv)_y-1}{\ell-1} \le 4^{\abs{Sv}}
        \le \big(4^{\abs{S}}\big)^{\abs{v}}.\qedhere
    \end{align*}
\end{proof}

We introduce the two directed graphs $G_1$ and $G_2$ which are at the center of our investigation. Refer to \cref{fig:G1G2} for a graphical representation of the two graphs.
\begin{definition}\label{def:G1}
    Let $G_1$ be the directed weighted graph on $\Z^2$ such that
    \begin{align*}
        G_1((k, h), (k-2, h+2)) &= \frac{\jap{\max\{k,\abs{h}\}}}{\jap{h}} & \text{for all $k\ge 2$ and $h\in\Z$}, \\
        G_1((k, h), (k, h)) &= 1 & \text{for all $k\ge 0$ and $h\in\Z$} , \\
        G_1((k, h), (k+2, h-2)) &= \frac{\jap{h}}{\jap{\max\{k,\abs{h}\}}} & \text{for all $k\ge 0$ and $h\in\Z$} , 
    \end{align*}
    and these are all the edges (i.e., all other values of $G_1$ are null).
\end{definition}
\begin{definition}\label{def:G2}
    Let $G_2$ be the directed weighted graph on $\Z^2$ such that
    \begin{equation*}
        G_2((k, h), (k', h')) =
        \begin{cases}
            1 & \text{if $k,k'\ge 0$, $(k,h)\not=(k',h')$ and $(k'-k, h'-h)\in \N(1,0)+ \N(-1,2)$}, \\
            0 &\text{otherwise}.
        \end{cases}
    \end{equation*}
\end{definition}
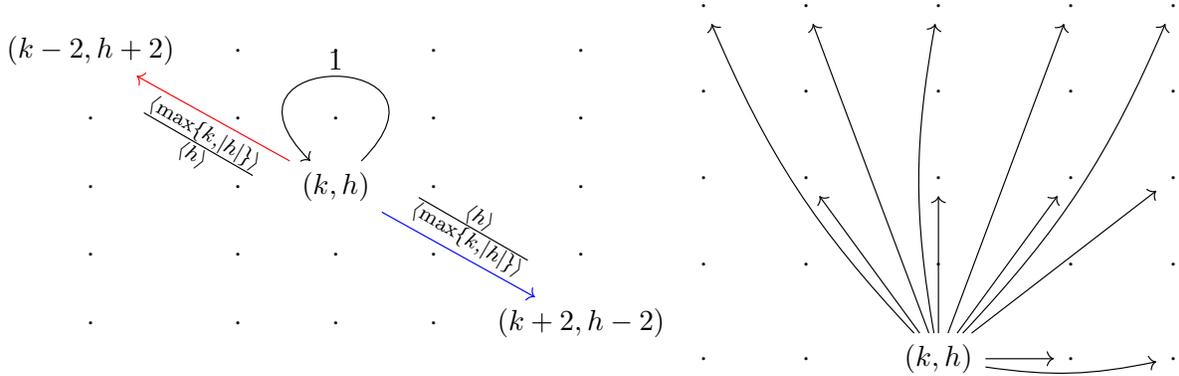
\begin{figure}[htp]
\begin{center}
\begin{tikzcd}[row sep=small, column sep=small]
(k-2, h+2) & \cdot & \cdot & \cdot & \cdot \\
\cdot & \cdot & \cdot & \cdot & \cdot \\
\cdot & \cdot & (k, h) 
\ar[uull, color=red, "{\color{black}\frac{\jap{\max\{k, \abs{h}\}}}{\jap{h}}}" {rotate=-30, anchor=north, font=\normalsize}] 
\ar[ddrr, color=blue, "{\color{black}\frac{\jap{h}}{\jap{\max\{k, \abs{h}\}}}}" {rotate=-30, anchor=south, font=\normalsize}] 
\ar[loop, "1" {above, font=\normalsize}]
& \cdot & \cdot \\
\cdot & \cdot & \cdot & \cdot & \cdot \\
\cdot & \cdot & \cdot & \cdot & (k+2, h-2)
\end{tikzcd}
\begin{tikzcd}
\cdot & \cdot & \cdot & \cdot & \cdot \\
\cdot & \cdot & \cdot & \cdot & \cdot \\
\cdot & \cdot & \cdot & \cdot & \cdot \\
\cdot & \cdot & \cdot & \cdot & \cdot \\
\cdot & \cdot & (k, h) 
\ar[r] \ar[rr, bend right=10] 
\ar[uul] \ar[uu] \ar[uur] \ar[uurr]
\ar[uuuull, bend left=10]
\ar[uuuul] \ar[uuuu, bend left=10]
\ar[uuuur] \ar[uuuurr, bend right=10]
& \cdot & \cdot
\end{tikzcd}
\end{center}
\caption{Diagrams representing $G_1$ and $G_2$.}
\label{fig:G1G2}
\end{figure}

The core of this section is the following proposition, which controls the weight of a path in $G_1+G_2$. Notice that the weight of the edges of $G_1$ are unbounded, so one needs to show some nontrivial cancellation to obtain the result.
\begin{proposition}\label{prop:crux}
    There is a constant $C_2>0$ such that the $(G_1+G_2)$-weight of any path $p_0,p_1,\dots, p_{\ell}$ (the length $\ell$ is arbitrary)  is less than
    \begin{equation*}
        C_2^{\ell+\max_{0\leq i\leq \ell}\abs{p_i}} .
    \end{equation*}
\end{proposition}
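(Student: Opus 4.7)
The central difficulty is that an up-left edge of $G_1$ can carry weight as large as $\jap{\max(k,|h|)}$, which may be $\Theta(M)$, while the target bound $C_2^{\ell+M}$ allows only a constant factor per unit length. Cancellations must therefore be extracted from the joint structure of $G_1$ and $G_2$.

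\textbf{Path decomposition.} Every $G_1$-edge preserves the quantity $c\defeq k+h$, while every $G_2$-edge strictly increases $c$ by $a+b\geq 1$ (where $(a-b,2b)$ is the displacement). Consequently, the path partitions into maximal $G_1$-only sub-paths $R_1,\ldots,R_s$ lying on strictly increasing lines $c_1<\ldots<c_s\leq M$, interleaved by $G_2$-edges of unit weight. The total weight is thus $\prod_{i=1}^s W_i$, and it suffices to bound this product.

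\textbf{Per-sub-path bound.} On the line $\{k+h=c\}$, a $G_1$-sub-path is a one-dimensional walk in $k$ with steps in $\{-2,0,+2\}$; setting $A(k)\defeq \jap{\max(k,|c-k|)}/\jap{c-k}$, the up-left step at $k$ has weight $A(k)$ and the down-right step at $k$ has weight $1/A(k)$. A summation-by-parts argument based on the conservation ``entries equal exits at each interior $k$'' yields
\begin{equation*}
\log W_i \;=\; \underbrace{\sum_{k_r^i < k \leq k_0^i}\log A(k)}_{\log W_i^{\mathrm{mono}}} \;+\; \sum_k R_k\,\bigl[\log A(k+2)-\log A(k)\bigr],
\end{equation*}
where $R_k$ counts the down-right steps of $R_i$ at position $k$ and $k_0^i,k_r^i$ are the start/end $k$-coordinates of $R_i$. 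Uniform boundedness of $|\log A(k+2)-\log A(k)|$ gives a zig-zag correction at most $C\,m_i$. The monotone factor $W_i^{\mathrm{mono}}$ is a ratio of Gamma functions, and Stirling combined with the convexity of $x\log x$ yields $\log W_i^{\mathrm{mono}}\leq (c_i/2)\log 2+O(1)$.

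\textbf{Combining --- the main obstacle.} A direct summation over sub-paths is too crude, since $\sum c_i$ can reach $\Theta(M^2)$. Moreover, no global potential $\Psi:\Z^2\to(0,\infty)$ can satisfy both a telescoping identity for $G_1$-up-left edges and boundedness in the $G_2$-cone: composing the cone-admissible jump $(k_0,0)\to(0,2k_0)$ with a long $G_1$-up-left run of weight $\Theta(2^{2k_0})$ forces on $\Psi$ two mutually incompatible constraints for $k_0$ large. The combining step must therefore be genuinely combinatorial. I expect the argument to invoke \cref{lem:count_path} on $G_2$ (which has the cone property with $v_0=(1,2)$), together with the strict monotonicity of $c$ and the Gamma-ratio structure of $W_i^{\mathrm{mono}}$ (encoding the ``$h$-displacement'' of each sub-path), in order to establish $\sum_i \log W_i^{\mathrm{mono}}\leq C(\ell+M)$. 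Combined with the per-sub-path zig-zag correction this yields $\log W\leq C_2'(\ell+M)$.
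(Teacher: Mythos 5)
Your per-sub-path analysis is sensible and the obstruction you identify (that $\sum_i c_i$ can reach $\Theta(M^2)$, so summing $(c_i/2)\log 2$ over $G_1$-runs is hopeless, and no global potential can work) is real. But the proposal stops exactly at the point where the real content begins: the assertion $\sum_i \log W_i^{\mathrm{mono}}\leq C(\ell+M)$ is not proved, only conjectured (``I expect the argument to invoke \cref{lem:count_path} \ldots''). As written this is a genuine gap: without that estimate nothing follows. It is also not clear that \cref{lem:count_path} is the right tool — that lemma \emph{counts} paths, whereas what you need is a cancellation between the Gamma-ratios $W_i^{\mathrm{mono}}$ across \emph{different} $c$-levels, and nothing in your decomposition couples those levels.

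The paper's proof uses a different decomposition that makes the cancellation visible immediately. Rather than partitioning the path into maximal $G_1$-runs on fixed levels $c=k+h$, it fixes each horizontal line $\{h=\mathrm{const}\}$ and looks at the $G_1$-edges of the path that \emph{cross} it. Call $(k,h)\to(k-2,h+2)$ edges red (the only edges with weight possibly $>1$) and $(k,h)\to(k+2,h-2)$ edges blue (the only edges of the whole graph $G_1+G_2$ that decrease $h$). Since only blue edges decrease $h$, two red crossings of a given horizontal line cannot occur without a blue crossing in between; pairing each red crossing with the subsequent blue crossing, and using that $k+h$ is nondecreasing along the path, the product of the two paired weights is bounded by an absolute constant. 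The one possibly unpaired red crossing on the line through $h$ contributes at most $(\max_i|p_i|+1)/\jap{h-1}$, and the product of these over $|h|\le\max_i|p_i|$ is $\le e^{2(\max_i|p_i|+1)}$ by Stirling. Since $\sum_h |S_h|\le\ell$ (each edge crosses at most one horizontal line), the total weight is $\le C^{\ell}e^{2(\max_i|p_i|+1)}$. The red/blue pairing across a fixed $h$-line is precisely the combinatorial mechanism your approach was missing; it couples the different $c$-levels and turns the $\Theta(M^2)$ heuristic count into the correct linear bound.
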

\begin{proof}
    Let $e_i\defeq (p_{i-1}, p_i)$ be the edges in the path and let $w_i\defeq (G_1+G_2)(p_{i-1}, p_i)$ be the respective weights. 

    We want to bound $\prod_{0\leq i\leq\ell}w_j$, so we are interested only in those indexes for which $w_i\neq 1$. There are exactly two kind of edges in $G_1+G_2$ with a weight different from $1$.
    Let us say that the edges $(k,h)\to(k-2,h+2)$ are \emph{red} (the only edges which may have a weight larger than $1$) and the edges $(k,h)\to(k+2,h-2)$ are \emph{blue} (the only edges which reduce the $y$ coordinate and the only edges which may have a weight smaller than $1$).
    
    Let us fix a value $h\in\Z$ and consider the set of indexes $S_h\defeq\{i_1<i_2<\cdots<i_{k_h}\}$ defined by the following property: $i\in S_h$ if an only if the edge $e_{i}$ is either red or blue and it crosses the line $\{y=h\}$ (i.e., the two vertices connected by the edge lie on strictly opposite sides with respect to such line).
    Given $i\in S_h$, we have:
    \begin{itemize}
        \item if $e_i$ is a red edge, then $(p_{i-1})_y = h-1$ and $(p_{i})_y = h+1$;
        \item if $e_i$ is a blue edge, then $(p_{i-1})_y = h+1$ and $(p_{i})_y = h-1$. 
    \end{itemize}
    Each edge can cross at most one horizontal line, thus the sets $(S_h)_{h\in\Z}$ are disjoint.
    Since the blue edges are the only ones in $G_1+G_2$ which decrease the $y$-coordinate, $S_h$ cannot contain two consecutive indexes $i_j, i_{j+1}$ which both correspond to red edges. Thus, we have
    \begin{equation}\label{eq:local66}
        \prod_{i\in S_h} w_i \le 
        w_{i_{k_h}} \prod_{1\le j< k_h:\ e_{i_j}\text{ is red}} w_{i_j}w_{i_{j+1}} .
    \end{equation}
    If $e_{i_j}$ is red and $j<i_{k_h}$, then $e_{i_{j+1}}$ must be a blue edge and therefore we have the estimate
    \begin{equation}\label{eq:local67}
        w_{i_j}w_{i_{j+1}} =
        \frac{\jap{\max\{\abs{h-1}, (p_{i_j-1})_x\}}}{\jap{h-1}}
        \frac{\jap{h+1}}{\jap{\max\{\abs{h+1}, (p_{i_{j+1}-1})_x\}}}
        \le 3 \max\Big\{1, \frac{(p_{i_j-1})_x+1}{(p_{i_{j+1}-1})_x+1}\Big\}
    \end{equation}
    Since $(p_i)_x+(p_i)_y$ is a nondecreasing quantity (as $i$ increases), we have
    \begin{equation}\label{eq:local68}
        (p_{i_{j}-1})_x + h-1 \le (p_{i_{j+1}-1})_x + h + 1 \implies (p_{i_{j}-1})_x \le (p_{i_{j+1}-1})_x + 2 
        \implies
        \frac{(p_{i_j-1})_x+1}{(p_{i_{j+1}-1})_x+1}
        \le 
        3
        . 
    \end{equation}
    If $e_{i_k}$ is not red then $w_{i_k}\le 1$, otherwise we have
    \begin{equation}\label{eq:local69}
        w_{i_k} \le \frac{\jap{\max\{\abs{h-1}, (p_{i_k-1})_x\}}}{\jap{h-1}} \le \frac{\max_i \abs{p_i}+1}{\jap{h-1}} .
    \end{equation}
    Joining \cref{eq:local66,eq:local67,eq:local68,eq:local69}, we obtain
    \begin{equation*}
        \prod_{i\in S_h} w_i \le 9^{\abs{S_h}} \frac{\max_i \abs{p_i}+1}{\jap{h-1}} .
    \end{equation*}
    Thanks to the latter inequality, we deduce
    \begin{equation*}
        w_1w_2\cdots w_\ell = \prod_{\abs{h}\le \max_i \abs{p_i}}\prod_{i\in S_h} w_i 
        \le \prod_{\abs{h}\le \max_i \abs{p_i}} C^{\abs{S_h}} \frac{\max_i \abs{p_i}+1}{\jap{h-1}}
        \le 
        C^{\ell} \frac{m^{2m}}{m!^2} \le C^\ell e^{2m},
        .
    \end{equation*}
    where $m=\max_i \abs{p_i} + 1$. The desired statement follows.
\end{proof}

Thanks to \cref{lem:count_path,prop:crux}, we can finally control from above the powers of $G_1\cdot G_2$, which will be crucial in the proof of our main theorem.
\begin{lemma}\label{lem:controlG1G2}
    There is a constant $C_3>0$ such that, for any $\ell\ge 1$, any $p\in \N\times\Z$, and any $q\in\N\times\Z$, we have
    \begin{equation*}
        (G_1\cdot G_2)^\ell(p, q) \le C_3^{\abs{p}+\abs{q}} .
    \end{equation*}
    Moreover, if $(G_1\cdot G_2)^\ell(p, q)\not= 0$ then 
    \begin{equation*}
        q-p\in \Sigma_\ell \defeq
        \{(x,y)\in\Z^2:\, x+y\ge\ell,\, y\ge -2\ell,\, 4x+3y\ge 0,\, 2x+3y\ge 0,\, y\text{ is even}\} .
    \end{equation*}
\end{lemma}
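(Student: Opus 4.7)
The plan is to expand $(G_1 \cdot G_2)^\ell(p, q)$ as $\sum_{\tilde\gamma} w(\tilde\gamma)$, summed over ``alternating'' paths $\tilde\gamma = (v_0, v_1, \dots, v_{2\ell})$ of length $2\ell$ in $G_1 + G_2$ with $v_0 = p$, $v_{2\ell} = q$, odd-indexed edges in $G_1$, and even-indexed edges in $G_2$; I then bound each $w(\tilde\gamma)$ via \cref{prop:crux} and the number of paths via \cref{lem:count_path} applied to $G_1 \cdot G_2$ itself. The observation that drives the whole estimate is that $G_2$-edges strictly increase $x+y$ by at least $1$ (since $(a-b) + 2b = a + b \ge 1$ for non-trivial $G_2$-directions), while $G_1$-edges preserve it; hence whenever $(G_1 \cdot G_2)^\ell(p, q) \ne 0$, we must have $\ell \le (q_x + q_y) - (p_x + p_y) \le \abs{p} + \abs{q}$. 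This single reduction converts every $\ell$-dependence in what follows into an $\abs{p} + \abs{q}$-dependence.

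Next I will bound $\max_i \abs{v_i}$ along any such $\tilde\gamma$ using the constraints $(v_i)_x \ge 0$, $(v_i)_x + (v_i)_y \in [p_x + p_y,\, q_x + q_y]$, and $(v_i)_y \ge p_y - 2\ell$ (only $G_1$-edges decrease $y$, by at most $2$ each). Combined with $\ell \le \abs{p} + \abs{q}$, these give $\max_i \abs{v_i} \le C(\abs{p} + \abs{q})$, so \cref{prop:crux} controls every weight by
\begin{equation*}
    w(\tilde\gamma) \le C_2^{2\ell + C(\abs{p} + \abs{q})} \le C^{\abs{p} + \abs{q}}.
\end{equation*}

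The hard part will be applying \cref{lem:count_path} to $G_1 \cdot G_2$, which requires exhibiting some $v_0 \in \R^2$ with $v_0 \cdot v > \abs{v}$ for every $G_1 \cdot G_2$-edge direction. These directions are of the form $v = (a - b + \sigma_x,\, 2b + \sigma_y)$ with $a, b \in \N$, $a + b \ge 1$, and $(\sigma_x, \sigma_y) \in \{(\pm 2, \mp 2),\, (0, 0)\}$; for $v_0 = (6, 6)$ one has $v_0 \cdot v = 6(a+b)$, so the condition reduces to the elementary inequality $36(a+b)^2 > (a - b + \sigma_x)^2 + (2b + \sigma_y)^2$ for $a + b \ge 1$, which is routinely checked. \cref{lem:count_path} then gives at most $C_1^{\abs{q-p}}$ $G_1 \cdot G_2$-paths from $p$ to $q$, each lifting to at most $3^\ell$ alternating paths $\tilde\gamma$ (at most $3$ choices for the intermediate vertex of each $G_1 \cdot G_2$-edge). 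Multiplying the per-path weight bound by this count, and using $\ell \le \abs{p} + \abs{q}$ to absorb the $3^\ell$ factor, delivers the desired $(G_1 \cdot G_2)^\ell(p, q) \le C_3^{\abs{p} + \abs{q}}$.

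For the support statement $q - p \in \Sigma_\ell$, I will verify by direct inspection of the $G_1 \cdot G_2$-directions listed above that each $v = (c, d)$ satisfies $c + d \ge 1$, $d \ge -2$, $4c + 3d \ge 0$, $2c + 3d \ge 0$, and $d$ even, i.e., lies in $\Sigma_1$. Summing $\ell$ such directions, the linearity of the defining inequalities (together with the additivity of the evenness constraint) yields $q - p \in \Sigma_\ell$.
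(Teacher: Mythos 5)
Your proposal is correct and follows essentially the same strategy as the paper's proof: you bound the support by summing the $G_1\cdot G_2$-directions (which you directly verify lie in $\Sigma_1$), you reduce $\ell\le (q_x+q_y)-(p_x+p_y)\lesssim\abs{p}+\abs{q}$ via the key observation that every $G_2$-edge increases $x+y$, you count $G_1\cdot G_2$-paths via \cref{lem:count_path} and lift to alternating $(G_1+G_2)$-paths with a $3^\ell$ factor, you bound $\max_i\abs{v_i}\lesssim\abs{p}+\abs{q}$, and you invoke \cref{prop:crux} — all as in the paper. The only cosmetic deviations (your choice $v_0=(6,6)$ instead of $(100,100)$, your bounding of $\max_i\abs{v_i}$ through three coordinate constraints rather than the paper's compact-triangle argument, and your slightly too small constant in $(q_x+q_y)-(p_x+p_y)\le\abs{p}+\abs{q}$, which should read $\le\sqrt{2}\,(\abs{p}+\abs{q})$) are immaterial.
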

\begin{proof}
    One can check that
    \begin{align*}
        \{q-p:\, (G_1\cdot G_2)(p, q) > 0\} &= 
        \{(x,y)\in\Z^2:\, y\in\{-2,0,2,4,\dots\},\, x+y\ge 1,\, 2x+y\ge -2\} \subseteq
        \Sigma_1 .
    \end{align*}
    If $(G_1\cdot G_2)^\ell(p, q) \not= 0$ then $q-p \in \Sigma_1+\Sigma_1+\cdots + \Sigma_1=\Sigma_\ell$, where the sum is repeated $\ell$ times. Hence, we have shown the second part of the statement.
    From now on we assume $q_x+q_y-(p_x+p_y) \ge \ell$.
    
    Notice that $(G_1\cdot G_2)^\ell(p, q)$ is the sum of the $(G_1\cdot G_2)$-weights of all the paths of length $\ell$ from $p$ to $q$. To bound that quantity we will bound the number of paths and then the maximum weight of a path.
    
    Since the set $\Sigma_1$ is contained in a convex cone strictly smaller than a half-plane, there is a vector $v_0\in\Z^2$ (the choice $v_0=(100,100)$ works) such that $v_0\cdot \sigma \ge \abs{\sigma}$ for any $\sigma\in \Sigma_1$. 
    Hence, the assumption necessary to apply \cref{lem:count_path} on $G_1\cdot G_2$ is satisfied. Therefore the number of paths of length $\ell$ from $p$ to $q$ using edges in $G_1\cdot G_2$ is bounded by $C_1^{\abs{q-p}}$.
    
    Now we prove that any path, with edges in $G_1\cdot G_2$, from $p$ to $q$ is contained in a ball with radius comparable to $\abs{p}+\abs{q}$.
    Let $\Sigma\defeq \{(x,y)\in\R^2:\, 4x+3y\ge 0, 2x+3y\ge 0\}$.
    Since $\Sigma\supseteq \Sigma_1\supseteq \Sigma_2\supseteq\cdots$, any point on a path from $p$ to $q$ which uses only edges of $G_1\cdot G_2$ belongs to 
    \begin{align*}
        (p+\Sigma)\cap&\{(x,y)\in\R^2:\, x+y \le q_x+q_y\}
        =
        p + (q_x+q_y-p_x-p_y)\Big(\Sigma\cap \{(x,y)\in\R^2:\, x+y\le 1\}\Big).
    \end{align*}
    The set $\Sigma\cap \{(x,y)\in\R^2:\, x+y\le 1\}$ coincides with the triangle with vertices $(0,0$), $(3, -2)$, $(-3, 4)$. Since such a triangle is a subset of the ball $B_5$, we deduce that any vertex $r$ on a path from $p$ to $q$ using only edges of $G_1\cdot G_2$ satisfies
    \begin{equation}\label{eq:loc41}
        \abs{r} \le \abs{p} + 5(q_x+q_y-p_x-p_y) \le 11(\abs{p}+\abs{q}) .
    \end{equation}
    
    We proceed by showing that the $G_1\cdot G_2$-weight of a path, with edges in $G_1\cdot G_2$, of length $\ell$ from $p$ to $q$ is bounded by the sum of the $(G_1+G_2)$-weights of at most $3^\ell$ paths, with edges in $G_1+G_2$, of length $2\ell$ from $p$ to $q$.
    Notice that each vertex has at most $3$ outward edges in $G_1$, moreover these edges have length bounded by $2\sqrt 2$.
    Therefore, by definition of product, for each $(r_0, r_2)\in\Z^2\times\Z^2$ there is a set $R_1(r_0, r_2)\subseteq \overline{B(r_0, 2\sqrt{2})}\cap\Z^2$ with cardinality at most $3$ such that
    \begin{equation*}
        (G_1\cdot G_2)(r_0, r_2) 
        =
        \sum_{r_1\in R_1(r_0, r_2)} G_1(r_0, r_1) \cdot G_2(r_1, r_2)
        \le
        \sum_{r_1\in R_1(r_0, r_2)} (G_1+G_2)(r_0, r_1) \cdot (G_1+G_2)(r_1, r_2) .
    \end{equation*}
    Hence, if $p=p_0,p_1,p_2,\dots, p_\ell=q$ is a path from $p$ to $q$ of length $\ell$, we have
    \begin{align*}
        \Big[\text{$(G_1\cdot G_2)$-weight of }& (p_0,p_1,\dots,p_\ell)\Big] 
        = 
        \prod_{i=1}^\ell (G_1\cdot G_2)(p_{i-1},p_i)
        \\
        &\le 
        \prod_{i=1}^\ell \ \sum_{r_i\in R(p_{i-1},p_i)} (G_1 + G_2)(p_{i-1}, r_i) \cdot (G_1+G_2)(r_i, p_i)
        \\
        &=
        \sum_{r_i\in R(p_{i-1},p_i)\ \forall 1\le i\le \ell} \ \prod_{i=1}^\ell (G_1 + G_2)(p_{i-1}, r_i) \cdot (G_1+G_2)(r_i, p_i)
        \\
        &=
        \sum_{r_i\in R(p_{i-1},p_i)\ \forall 1\le i\le \ell} \Big[\text{$(G_1 + G_2)$-weight of $(p_0,r_1,p_1,r_2,p_2,\dots,p_{\ell-1},r_\ell, p_\ell)$}\Big] .
    \end{align*}
    Therefore, the $(G_1\cdot G_2)$-weight of a path of length $\ell$ from $p$ to $q$ is less than the sum of the $(G_1+G_2)$-weights of at most $3^\ell$ path of length $2\ell$ from $p$ to $q$. Furthermore, the mentioned paths in $G_1+G_2$ have all vertices in the set $\{r\in\Z^2: \abs{r} \le 11(\abs{p}+\abs{q})+2\sqrt2\}$ (since they are distant at most $2\sqrt2$ from the closest vertex of the original path and we have \cref{eq:loc41}).
    
    Finally, thanks to \cref{prop:crux}, we know that the $(G_1+G_2)$-weight of any such path of length $2\ell$ is controlled by
    \begin{equation*}
        C_2^{2\ell + 11(\abs{p}+\abs{q})+2\sqrt2} .
    \end{equation*}
    
    Joining the various observations, we have
    \begin{equation*}
        (G_1\cdot G_2)^\ell(p, q) < C_1^{\abs{q-p}}3^{\ell}C_2^{2\ell + 11(\abs{p}+\abs{q})+2\sqrt2} .
    \end{equation*}
    The latter inequality yields the desired result because $\ell\le (q_x+q_y)-(p_x+p_y) \le 2(\abs{p}+\abs{q})$.
\end{proof}

\section{Proof of the main theorems}
We consider an elliptic operator $L=\sum_{i,j}A_{ij}\partial_ij + \sum_i b_i\partial_i + c$ with $(A_{ij})_{i,j}, (b_i)_{i}, c$ analytic functions on $B_1\subseteq\R^n$. We also assume (everywhere but in the proof of \cref{thm:main}, where we handle the general case) that $A_{ij}(0_{\R^n}) = \delta_{ij}$.
Notice that, recalling the theory developed in \cref{sec:harmonicdecomposition}, the operator $L$ can be seen as an operator from $\H^{\bullet,\bullet}$ onto $\H^{\bullet,\bullet}$.
As we have already done in the introduction, let us denote with $T:\H^{\bullet,\bullet}\to\H^{\bullet,\bullet}$ the operator
\begin{equation*}
    T \defeq (\lapl-L)\circ \lapl^{-1} ,
\end{equation*}
where $\lapl^{-1}$ is the inverse Laplacian defined in \cref{lem:inverse_lapl}.

Before going on, let us describe the relationship between the graphs $G_1$ and $G_2$ (see \cref{def:G1,def:G2}) and the map $T$. As explained in the the introduction, the core of the proof of \cref{thm:main} is the understanding of the behavior of the iterations of the map $T$. The graphs $G_1$ and $G_2$ were devised in such a way that 
\begin{equation*}
    \norm{T}_{\mathscr L(\H^{k, h}\to \H^{k', h'})} \le C (G_1\cdot G_2)((k,h), (k',h'))
\end{equation*}
for any $(k, h), (k', h')\in\Omega$, for an appropriate constant $C$. Up to rescaling, thanks to the control we have already obtained on the powers of $G_1\cdot G_2$ in \cref{lem:controlG1G2}, this yields a control on the the iterations of $T$ which is sufficient for our scopes (see \cref{cor:estimateT_highlevel}).

\begin{lemma}\label{lem:estimateT_lowlevel}
    For any $\eps_1>0$ there is $\eps>0$ such that the following statements hold. 
    
    Assume that the analytic functions $(A_{ij})_{i,j}, (b_i)_i, c$ introduced at the beginning of this section satisfy
    \begin{align*}
        &\norm{[x^\ell](\delta_{ij}-A_{ij})}_{L^\infty(B_1)} \le \eps^{\ell} \quad \text{for all $1\le i,j\le n$,}\\
        &\norm{[x^\ell]b_i}_{L^\infty(B_1)} \le \eps^{1+\ell} \quad \text{for all $1\le i\le n$,} \\
        &\norm{[x^\ell]c}_{L^\infty(B_1)} \le \eps^{1+\ell},
    \end{align*}
    for any $\ell\ge 0$ ,where $[x^\ell]g$ denotes the $\ell$-homogeneous part of an analytic function $g$.

    For any $(k,h), (k',h')\in \Omega$ with $k'+h' > k+h$, we have
    \begin{equation}\label{eq:estimateT_generic}
        \norm{T}_{\mathscr L(\H^{k,h}\to\H^{k',h'})}
         \le \eps_1^{(k'+h')-(k+h)} (G_1\cdot G_2)((k,h), (k', h')) .
    \end{equation}
    Furthermore, for any $(k,h)\in \Omega$, we have\footnote{As observed in \cref{rmk:laplacianfullpicture}, morally the $\delta$ distribution belongs to the space $\H^{0, -n}$.}
    \begin{equation}\label{eq:estimateT_delta}
        \norm{\pi_{k,h}(T\delta)} \le \eps_1^{k+h+n} (G_1\cdot G_2)((0,-n), (k, h)) .
    \end{equation}
\end{lemma}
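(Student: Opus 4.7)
The plan is to decompose $T=(\lapl-L)\circ\lapl^{-1}$ as
\begin{equation*}
T = \sum_{i,j}(\delta_{ij}-A_{ij})\,\partial_{ij}\lapl^{-1} \;-\; \sum_i b_i\,\partial_i\lapl^{-1} \;-\; c\,\lapl^{-1},
\end{equation*}
and to bound each of the three types of terms separately. The conceptual point is that $G_1$ and $G_2$ are engineered precisely to match this decomposition: the three non-loop edges of $G_1$ out of $(k,h)$ correspond to the three spaces into which $\partial_{ij}\lapl^{-1}$ can land (all preserving total homogeneity $k+h$), while $G_2$-edges encode multiplication by homogeneous polynomials coming from the Taylor expansions of the analytic coefficients.

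For the principal term I would combine \cref{lem:inverse_lapl,lem:derivative} to verify that $\partial_{ij}\lapl^{-1}$ maps $\H^{k,h}$ into $\H^{k-2,h+2}\oplus\H^{k,h}\oplus\H^{k+2,h-2}$ with operator norms bounded (up to a universal constant) respectively by $\frac{\jap{\max\{k,\abs{h}\}}}{\jap{h}}$, $1$, and $\frac{\jap{h}}{\jap{\max\{k,\abs{h}\}}}$, i.e.\ exactly the weights of \cref{def:G1}. Since $A_{ij}(0)=\delta_{ij}$ removes the constant Taylor term of $\delta_{ij}-A_{ij}$, \cref{lem:mul_anal} combined with $\norm{[x^\ell](\delta_{ij}-A_{ij})}_{L^\infty}\le\eps^\ell$ then shows that multiplication by $\delta_{ij}-A_{ij}$ sends $\H^r\to\H^{k',h'}$ with operator norm at most $\eps^{(k'+h')-(r_x+r_y)}G_2(r,(k',h'))$. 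Summing the resulting composition bound over the intermediate spaces $r$ yields a bound of the form $C\eps^{(k'+h')-(k+h)}(G_1\cdot G_2)((k,h),(k',h'))$, which is absorbed into $\eps_1^{(k'+h')-(k+h)}(G_1\cdot G_2)$ by choosing $\eps$ small in terms of $\eps_1$ and the universal constant.

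The lower-order terms $b_i\partial_i\lapl^{-1}$ and $c\lapl^{-1}$ need a slightly different but easier argument: the operators $\partial_i\lapl^{-1}$ and $\lapl^{-1}$ themselves raise the total homogeneity (by $1$ and $2$ respectively) and do not correspond to $G_1$-edges, but a direct computation via \cref{lem:inverse_lapl,lem:derivative} shows that they have uniformly bounded operator norm and map $\H^{k,h}$ into $\H^r$ with $r-(k,h)\in\{(-1,2),(1,0),(0,2)\}\subset\N(1,0)+\N(-1,2)$. Hence they are dominated entrywise by the $G_1$-loop at $(k,h)$ composed with a single $G_2$-edge. Multiplication by $b_i$ (whose $\ell$-homogeneous part is bounded by $\eps^{1+\ell}$) then supplies the full factor $\eps^{(k'+h')-(k+h)}$, while multiplication by $c$ gives only $\eps^{(k'+h')-(k+h)-1}$; for this last term, however, the fact that $\lapl^{-1}$ raises the degree by $2$ guarantees $(k'+h')-(k+h)\ge 2$, and the missing power of $\eps$ is absorbed by taking $\eps$ sufficiently small (e.g.\ so that $C\eps\le\eps_1^2$).

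The second inequality \eqref{eq:estimateT_delta} is obtained by running the same analysis with ``source space'' $(0,-n)$, treating $\delta\in\H^{0,-n}$ as suggested in \cref{rmk:laplacianfullpicture}: since $\lapl^{-1}\delta\in\H^{0,2-n}$ (or $\H^{0,0}$ when $n=2$) by \cref{rmk:fundamentalsolution} and $(0,2-n)-(0,-n)=(0,2)\in\N(1,0)+\N(-1,2)$, the same composition bounds yield $\eps_1^{k+h+n}(G_1\cdot G_2)((0,-n),(k,h))$. The main obstacle throughout is the careful bookkeeping needed to verify that each constituent piece of $T$ is dominated by exactly the single $G_1\cdot G_2$ entry claimed, and that the universal constants appearing in the operator estimates can all be simultaneously absorbed by one small choice of $\eps$; the combinatorial structure of $G_1$ and $G_2$ then packages the residual dependence on $(k,h)$ and $(k',h')$ in exactly the form required for the iteration argument to follow.
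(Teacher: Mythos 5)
Your plan is essentially the same as the paper's. You split $T$ into the three operator families, bound $\partial_{ij}\lapl^{-1}$ by the weighted $G_1$-edges via \cref{lem:inverse_lapl,lem:derivative}, bound the multiplication operators by $G_2$-edges carrying $\eps^\ell$ via \cref{lem:mul_anal}, and absorb the universal constants and the one-power deficit from the zeroth-order term into $\eps$. This matches the paper's proof, except for one cosmetic difference: instead of your observation that $\partial_i\lapl^{-1}$ and $\lapl^{-1}$ have bounded norm and land on $G_2$-admissible shifts (so that the loop in $G_1$ and $(G_1\cdot G_2)\ge 1$ absorb them), the paper introduces auxiliary shift graphs $G_+,G_-$ with $G_+G_-=\id$ and writes, e.g., $G_{\partial_i\lapl^{-1}}\le C\,G_1 G_+$ and $G_{m_{b_i}}\le\eps^{1+\ell}G_- G_2$; both packagings give the same final estimate.

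One point in your sketch deserves more care, and you should be explicit about it: in the delta case the formal object $\partial_{ij}(\lapl^{-1}\delta)$ lands on or beyond the critical line $k+h=-n$ and is \emph{not} in $\H^{\bullet,\bullet}$ (it is not locally integrable). The paper resolves this by observing (via \cref{lem:derivative}\cref{it:local19} and \cref{rmk:multiplicationcritical}) that $x_\beta\partial_{ij}(\lapl^{-1}\delta)\in\H^{1,-n}\oplus\H^{3,-(n+2)}\subset\H^{\bullet,\bullet}$, and that the coefficient $\delta_{ij}-A_{ij}$ vanishes at the origin, so the second-order contribution to $T\delta$ is genuinely of the form $(\text{polynomial vanishing at }0)\cdot\partial_{ij}(\lapl^{-1}\delta)$ and hence well defined. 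Your phrase ``the same composition bounds yield'' glosses over this regularization step; spelling it out is needed to make the argument for \eqref{eq:estimateT_delta} rigorous. With that addition, the proposal is correct and follows the paper's route.
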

\begin{proof}
    We associate to a linear operator $S:\H^{\bullet,\bullet}\to\H^{\bullet,\bullet}$ a directed weighted $\Z^2$-graph $G_S$ (see \cref{def:graph}) by setting
    \begin{equation*}
        G_S((k,h),(k',h'))\defeq\norm{S}_{\mathscr L(\H^{k,h}\to \H^{k',h'})} .
    \end{equation*}
    Notice that $G_{S_1\circ S_2} \le G_{S_2}\cdot G_{S_1}$.

    By the triangle inequality, we have
    \begin{equation}\label{eq:local18}
        G_T \le \sum_{1\le i,j\le n} G_{\partial_{ij}\circ \lapl^{-1}} \cdot G_{m_{a_{ij}}}
        + \sum_{1\le i\le n} G_{\partial_{i}\circ \lapl^{-1}} \cdot G_{m_{b_{i}}}
        + G_{\lapl^{-1}} \cdot G_{m_{c}} ,
    \end{equation}
    where $a_{ij}\defeq \delta_{ij}-A_{ij}$.
    
    Let $G_+$ be the weighted directed $\Z^2$-graph such that, for each $(k, h)\in \Z^2$, $G((k, h), (k+1, h)) = 1$ and these are all the edges. Let $G_-$ be the weighted directed $\Z^2$-graph such that, for each $(k, h)\in\Z^2$, $G((k, h),(k-1, h))=1$.
    Notice that $G_+\cdot G_- = G_-\cdot G_+$ and it corresponds to a $\Z^2$-graph which has only self-edges with weight $1$, so it is the identity element with respect to the multiplication of $\Z^2$-graphs.
    
    We claim that, when evaluated on the pairs $p, q\in \Omega$, we have (for a universal constant $C=C(n)>0$)
    \begin{align}
        G_{\partial_{ij}\circ\lapl^{-1}} &\le C\cdot G_1 , \label{eq:local19}\\
        G_{\partial_i\circ \lapl^{-1}} &\le C \cdot G_1\cdot G_+ , \label{eq:local20} \\
        G_{\lapl^{-1}} &\le C \cdot G_1 \cdot G_+\cdot G_+ . \label{eq:local21} 
    \end{align}
    We will prove only \cref{eq:local19} since the other two inequalities \cref{eq:local20,eq:local21} can be shown in an analogous way (and their proof is also easier).
    
    Thanks to \cref{lem:derivative,lem:inverse_lapl}, we know that (up to universal multiplicative constants) the graph $G_{\partial_{ij}\circ\lapl^{-1}} = G_{\lapl^{-1}}\cdot G_{\partial_i}\cdot G_{\partial_j}$ is less than or equal to the graph represented by the diagram\footnote{The graph represented by the diagram is a directed weighted $\Z^2$-graph whose edges go from $(k,h)$ to elements of the rightmost layer of the diagram with weights given by the sum of the weights of the paths in the diagram between the corresponding points.}
    \begin{center}
    \begin{tikzcd}[row sep=tiny, column sep=huge]
    \phantom{1em} \ar[r,"\lapl^{-1}", dashed, shift right=2.5ex]& \phantom{1em} \ar[r,"\partial_i", dashed, shift right=2.5ex] & \phantom{1em} \ar[r,"\partial_j", dashed, shift right=2.5ex] & \phantom{1em} \\
    & & & (k-2,h+2)\\
    & & (k-1, h+2) \ar[dr, "\jap{h}" dlab] \ar[ur, "\jap{\max\{k, \abs{h}\}}" ulab]& \\
    (k,h)  \ar[r, "\frac{1}{\jap{h}\jap{\max\{k,\abs{h}\}}}" below] & (k,h+2) \ar[dr, "\jap{h}" dlab] \ar[ur, "\jap{\max\{k,\abs{h}\}}" ulab] & & (k,h)\\
    & & (k+1, h) \ar[dr, "\jap{h}" dlab] \ar[ur, "\jap{\max\{k, \abs{h}\}}" ulab] & \\
    & & & (k+2, h-2)
    \end{tikzcd},
    \end{center}
    which is equivalent, up to multiplicative constants, to the graph represented by the diagram (obtained by summing the weights of the paths of length $3$ in the previous diagram)
    \begin{center}
    \begin{tikzcd}[row sep=tiny, column sep=huge]
    \ar[r,"\partial_{ij}\lapl^{-1}", dashed, shift right=2.5ex] & \phantom{1ex}\\
    & (k-2, h+2) \\
    (k, h) \ar[ur, "\frac{\jap{\max\{k,\abs{h}\}}}{\jap{h}}" ulab] \ar[r, "1"] \ar[dr, "\frac{\jap{h}}{\jap{\max\{k,\abs{h}\}}}" {dlab, below}] & (k, h) \\
    & (k+2, h-2)
    \end{tikzcd}.
    \end{center}
    Since the latter diagram corresponds to $G_1$ (cp. \cref{fig:G1G2}), we have proven \cref{eq:local19}.
    
    We also have the following inequalities between $\Z^2$-graphs, when we evaluate them on the pairs of points $p, q\in \Omega$
    \begin{equation}\label{eq:local22}\begin{aligned}
        G_{m_{a_{ij}}} &\le \eps^{\ell} \cdot G_2 , \\
        G_{m_{b_i}} &\le \eps^{1+\ell} \cdot G_- \cdot G_2 , \\
        G_{m_c} &\le \eps^{1+\ell} \cdot  G_-\cdot G_- \cdot G_2 ,
    \end{aligned}\end{equation}
    where $\ell\defeq q_x+q_y - (p_x+p_y)$.
    These three inequalities follow directly from the definition of $G_2$ (see \cref{def:G2}) and \cref{lem:mul_anal}.
    
    Joining \cref{eq:local18,eq:local19,eq:local20,eq:local21,eq:local22}, we obtain for all pairs of points $p, q\in \Omega$ (we use the Einstein notation for brevity)
    \begin{align*}
        G_T(p, q) &\le  
        \sum_{r\in\Omega:\ r_x+r_y=p_x+p_y} G_{\partial_{ij}\circ \lapl^{-1}}(p, r)  G_{m_{a_{ij}}}(r, q) \\
        &\quad+ \sum_{r\in\Omega:\ r_x+r_y=p_x+p_y+1} G_{\partial_{i}\circ \lapl^{-1}}(p, r) G_{m_{b_{i}}}(r, q) \\
        &\quad+ \sum_{r\in\Omega:\ r_x+r_y=p_x+p_y+2} G_{\lapl^{-1}}(p, r) G_{m_{c}}(r, q) \\
        &\le  
        C\eps^\ell\cdot\sum_{r\in\Omega:\ r_x+r_y=p_x+p_y} G_1(p, r)  G_2(r, q) \\
        &\quad+ C\eps^{1+\max\{0, \ell-1\}}\cdot\sum_{r\in\Omega:\ r_x+r_y=p_x+p_y+1} (G_1\cdot G_+)(p, r) (G_-\cdot G_2)(r, q) \\
        &\quad+ C\eps^{1+\max\{0, \ell-2\}}\cdot\sum_{r\in\Omega:\ r_x+r_y=p_x+p_y+2} (G_1\cdot G_+\cdot G_+)(p, r) (G_-\cdot G_-\cdot G_2)(r, q) \\
        &\le C'\big(\eps^{\ell} + \eps^{1+\max\{0, \ell-2\}}\big)(G_1\cdot G_2)(p, q) \,.
    \end{align*}
    where $\ell\defeq q_x+q_y-(p_x+p_y)$ and $C'=C'(n)>0$ is a universal constant. 
    Since both sides are null whenever $\ell\le 0$, we may assume $\ell\ge 1$. Then, it is elementary to check that $1+\max\{0, \ell-2\} \ge \frac{\ell}2$ and thus we have (for an appropriate universal constant $C>0$)
    \begin{equation*}
        G_T(p, q) \le 2C'\eps^{\frac{\ell}2}\cdot G_1\cdot G_2(p, q) ,
    \end{equation*}
    which implies \cref{eq:estimateT_generic} if we choose $\eps = \eps_1^2/(2C')^2$.
    
    We shift our attention to proving \cref{eq:estimateT_delta}.
    Let $\psi\in\H^{0,-(n-2)}$ be the function such that $\lapl^{-1}\delta=\psi$ considered in \cref{rmk:fundamentalsolution}.
    Thanks to \cref{lem:multiplicationoperator} and \cref{lem:derivative}-\cref{it:local19} we have
    \begin{equation}\label{eq:local112}\begin{aligned}
        &\psi \in\H^{0, -(n-2)} ,\\
        &\partial_i\psi \in \H^{1,-n} 
        \quad\text{for all $1\le i\le n$,}
        \\
        &x_\alpha\partial_{ij}\psi \in \H^{1,-n}\oplus \H^{3,-(n+2)} 
        \quad\text{for all $1\le i,j,\alpha\le n$.}
    \end{aligned}\end{equation}
    Take a function $\varphi\in \H^{k,h}$ and an analytic function $d=\sum_{\ell\ge 0}d_\ell$, where $d_\ell$ is $\ell$-homogeneous, with $\norm{d_\ell}_{L^{\infty}(B_1)}\le \eps^{1+\ell}$. \Cref{lem:mul_anal} tells us that 
    if $(k'+h')-(k+h)\in \N(1,0)+\N(-1, 2)$ then
    \begin{equation*}
        \norm{\pi_{k',h'}(d\cdot \varphi)} \le \norm{\varphi} \eps^{1+(k'+h')-(k+h)}
    \end{equation*}
    and $\pi_{k',h'}(d\cdot\varphi) = 0$ for all other pairs of $(k',h')\in\Z^2$.
    The latter observation, together with \cref{eq:local112} guarantees that $\pi_{k,h}(T\delta)=0$ if $(k,h)\not\in \{(1,-n), (3, -(n+2)\}+\N(1,0)+\N(-1,2)$ and otherwise
    \begin{equation*}
        \norm{\pi_{k,h}(T\delta)} \le C\eps^{1+\max\{k+h+n-2, 0\}}
    \end{equation*}
    for a certain constant $C=C(n)>0$. Exactly as we did for \cref{eq:estimateT_generic}, the desired \cref{eq:estimateT_delta} follows if we set $\eps = \eps_1^2/C^2$.
\end{proof}

\begin{corollary}\label{cor:estimateT_highlevel}
    For any $\eps_2>0$ there is $\eps>0$
    such that, under the same assumptions of \cref{lem:estimateT_lowlevel}, the following statement hold.
    
    Given $\ell\ge 1$, for any $(k,h)\in\Omega\cap\big((0,-n)+\Sigma_\ell\big)$ (where the set $\Sigma_\ell$ is defined in \cref{lem:controlG1G2}) we have
    \begin{equation*}
        \norm{\pi_{k,h}(T^\ell\delta)} \le \eps_2^{k+h+n} ,
    \end{equation*}
    while for all the other $(k,h)\in\Omega$ we have
    \begin{equation*}
        \pi_{k,h}(T^\ell\delta) = 0 .
    \end{equation*}
\end{corollary}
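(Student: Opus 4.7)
My plan is to iterate the one-step estimates of \cref{lem:estimateT_lowlevel} and then reduce everything to the combinatorial bound of \cref{lem:controlG1G2}; apart from a short geometric remark about $\Sigma_\ell$, there is no new idea. I fix $\eps_1>0$ (to be chosen at the end in terms of $\eps_2$, $n$ and the constant $C_3$ from \cref{lem:controlG1G2}) and let $\eps$ be the corresponding radius furnished by \cref{lem:estimateT_lowlevel}.

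The core algebraic step is to expand $T^\ell\delta$ by inserting $\sum_{(k,h)\in\Omega}\pi_{k,h}$ between consecutive applications of $T$. By the triangle inequality,
\begin{equation*}
\norm{\pi_{k,h}(T^\ell\delta)}\le\sum_{(k_1,h_1),\dots,(k_{\ell-1},h_{\ell-1})\in\Omega}\norm{\pi_{k,h}\circ T\circ\pi_{k_{\ell-1},h_{\ell-1}}}\cdots\norm{\pi_{k_1,h_1}(T\delta)}.
\end{equation*}
Applying \eqref{eq:estimateT_generic} to each intermediate factor and \eqref{eq:estimateT_delta} to the last one, the powers of $\eps_1$ telescope — the support of $G_1\cdot G_2$ enforces $k_{i+1}+h_{i+1}>k_i+h_i$ term by term, so the hypothesis of \eqref{eq:estimateT_generic} is automatically met — and the combinatorial factor reassembles itself into a single convolution power:
\begin{equation*}
\norm{\pi_{k,h}(T^\ell\delta)}\le\eps_1^{k+h+n}\,(G_1\cdot G_2)^\ell\big((0,-n),(k,h)\big).
\end{equation*}

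Next I would invoke \cref{lem:controlG1G2}: its support statement immediately gives $\pi_{k,h}(T^\ell\delta)=0$ unless $(k,h)-(0,-n)\in\Sigma_\ell$, which is the second claim of the corollary. On the cone $(0,-n)+\Sigma_\ell$, the same lemma yields $(G_1\cdot G_2)^\ell((0,-n),(k,h))\le C_3^{n+\abs{(k,h)}}$, so what remains is a linear control of $\abs{(k,h)}$ by $k+h+n$ on this region. A short calculation does it: the inequality $2k+3(h+n)\ge 0$ built into $\Sigma_\ell$ yields both $k\le 3(k+h+n)$ and $\abs{h+n}\le 2(k+h+n)$, whence $\abs{(k,h)}\le k+\abs{h}\le 5(k+h+n)+n$. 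Substituting back,
\begin{equation*}
\norm{\pi_{k,h}(T^\ell\delta)}\le C_3^{2n}\,\big(\eps_1 C_3^{5}\big)^{k+h+n}.
\end{equation*}

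Finally, choosing $\eps_1\defeq \eps_2\,C_3^{-(2n+5)}$ at the outset makes the right-hand side at most $\eps_2^{k+h+n}$ for every $k+h+n\ge 1$, which covers every case since $k+h+n\ge\ell\ge 1$ on the support. The genuine difficulty of the statement is already packaged into \cref{prop:crux,lem:controlG1G2}; the only mildly delicate points here are the clean telescoping of the $\eps_1$ exponents (which crucially uses the separate estimate \eqref{eq:estimateT_delta} on $T\delta$ for the initial factor, rather than \eqref{eq:estimateT_generic}) and the linear bound on $\abs{(k,h)}$ on $\Sigma_\ell$, both of which are routine.
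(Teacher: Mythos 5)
Your proposal is correct and follows essentially the same route as the paper's own proof: you expand $T^\ell\delta$ by inserting projections, apply \eqref{eq:estimateT_delta} to the initial factor and \eqref{eq:estimateT_generic} to the remaining ones so the $\eps_1$-powers telescope to $\eps_1^{k+h+n}$, reassemble the weights into $(G_1\cdot G_2)^\ell((0,-n),(k,h))$, and invoke \cref{lem:controlG1G2} for both the support and the $C_3^{\abs{p}+\abs{q}}$ bound, finally using the constraint $2k+3(h+n)\ge 0$ from $\Sigma_\ell$ to control $\abs{(k,h)}$ linearly in $k+h+n$. The only immaterial difference is the constant in that last linear bound (your $5$ versus the paper's sharper $\sqrt{13}$), which merely changes the precise choice of $\eps_1$.
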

\begin{proof}
    Let $\eps_1\defeq C_3^{-(2n+\sqrt{13})}\eps_2$, where $C_3$ is the constant which appears in \cref{lem:controlG1G2}.
    Thanks to \cref{lem:controlG1G2,lem:estimateT_lowlevel}, if $\eps>0$ is sufficiently small, we have
    \begin{equation}\label{eq:local245}\begin{aligned}
        &\norm{\pi_{k,h}(T^\ell\delta)} 
        \le
        \sum_{(k',h')\in\Z^2} \norm{\pi_{k', h'}(T\delta)}\cdot \norm{T^{\ell-1}}_{\mathscr L(\H^{k',h'}\to\H^{k,h})} 
        \\
        &\quad\le
        \sum_{(k',h')\in\Z^2} \eps_1^{k'+h'+n}(G_1\cdot G_2)((0, -n), (k',h')) \cdot \eps_1^{k+h-(k'+h')} (G_1\cdot G_2)^{\ell-1}((k', h'), (k, h)
        \\
        &\quad=
        \eps_1^{k+h+n} (G_1\cdot G_2)^\ell((0, -n), (k, h)) 
        \le \eps_1^{k+h+n}C_3^{\abs{(k,h)}+n} .
    \end{aligned}\end{equation}
    \Cref{lem:controlG1G2} guarantees that if $(k,h)-(0,-n)\not\in\Sigma_\ell$ then $(G_1\cdot G_2)^\ell((0, -n), (k,h)) = 0$ and thus $\pi_{k,h}(T^\ell\delta)=0$.
    From now on, we assume that $(k,h)\in \Omega\cap\big((0,-n)+\Sigma_\ell)$. 
    It is elementary to check that if $x,y\in\R$ satisfy $x\ge 0$ and $2x+3y\ge 0$, then $\abs{(x,y)}\le \sqrt{13}(x+y)$. Applying such inequality with $x = k$, $y = h+n$, we obtain
    \begin{equation}\label{eq:local246}
        \abs{(k,h)} + n \le \abs{(k, h+n)} + 2n\le \sqrt{13}(k+h+n) + 2n \le (k+h+n)(2n + \sqrt{13}).
    \end{equation}
    Joining \cref{eq:local245,eq:local246} and recalling the definition of $\eps_1$, the statement follows.
\end{proof}

\begin{proof}[Proof of \cref{thm:main}]
    Let us define the operator
    \begin{equation*}
        \tilde L\defeq (Q^{-1}AQ^{-1})_{ij}\partial_{ij} + (Q^{-1}b)_i\partial_i + c,
    \end{equation*}    
    where $Q$ is the matrix considered in the statement such that $A(0_{\R^n})= Q^2$. One can check that, for any function $\varphi$, $\tilde L(\varphi\circ Q)(x) = (L\varphi)(Qx)$. Hence, $u:B_1\to\R$ satisfies $Lu=\delta$ if and only if $\tilde L(u\circ Q) = \frac{\delta}{\det(Q)}$.
    Thus, working with $\tilde L$ instead of $L$, we may assume that $A_{ij}(0_{\R^n})=\delta_{ij}$ (i.e., the operator $L$ behaves as a Laplacian close to the origin).

    Given $r>0$ and a function $g$, let us denote with $g_r$ the function $g_r(x)\defeq g(rx)$. 
    Given $r>0$, let us define the operator
    \begin{equation*}
        L_r\defeq (A_{ij})_r\partial_{ij} + r\cdot(b_i)_r\partial_i + r^2\cdot c_r .    
    \end{equation*}
    One can check that, for any function $\varphi$, one has $L_r \varphi_r\big(\frac xr\big) = r^2 L\varphi(x)$.
    Hence, $u:B_{r_0}\to\R$ satisfies $Lu=\delta$ in $B_{r_0}$ if and only if $L_{r_0}u_{r_0} = r_0^{2-n}\delta$ in $B_1$. 
    Thus, working with $L_{r_0}$ instead of $L$ (with $r_0>0$ chosen appropriately), we may assume (see \cite[Lemma 2.1.10]{KrantzParks2002}) that, for an arbitrarily small fixed $\eps>0$, we have 
    \begin{align*}
        &\norm{[x^\ell](\delta_{ij}-A_{ij})}_{L^\infty(B_1)} \le \eps^{\ell} \quad \text{for all $1\le i,j\le n$,}\\
        &\norm{[x^\ell]b_i}_{L^\infty(B_1)} \le \eps^{1+\ell} \quad \text{for all $1\le i\le n$,} \\
        &\norm{[x^\ell]c}_{L^\infty(B_1)} \le \eps^{1+\ell},
    \end{align*}
    for any $\ell\ge 0$ ,where $[x^\ell]g$ denotes the $\ell$-homogeneous part of an analytic function $g$. 
    
    Let $T\defeq (\lapl-L)\circ \lapl^{-1}:\H^{\bullet,\bullet}\to \H^{\bullet,\bullet}$. Since $L\circ\lapl^{-1}=\id-T$, we have (as an identity between linear operators acting on $\H^{\bullet,\bullet}$)
    \begin{equation}\label{eq:main_formula}
        L\big(\lapl^{-1}(\id+T+T^2+\cdots+T^{\ell-1})(\delta)\big) = \delta-T^\ell\delta . 
    \end{equation}
    Let us choose $\eps>0$ so that \cref{cor:estimateT_highlevel} holds with $\eps_2=\frac12$.
    It follows from \cref{cor:estimateT_highlevel} that, for any $\ell\ge 1$, $T^\ell\delta$ satisfies the assumptions of \cref{prop:convergenceaspects} and $\sum_{\ell\ge 0}\norm{T^\ell\delta} < \infty$.
    Therefore, we deduce that $T^\ell\delta$ is a smooth function outside the origin (in the unit ball), it is $L^1_{loc}$ up to the origin and the series $\sum_{\ell\ge 0}T^\ell\delta$ converges in the $C^{\infty}$-topology, in any compact set of the unit ball, to a function which is smooth outside the origin.
    We obtain also that the formula \cref{eq:main_formula} holds both in the distributional sense and pointwise. Since $T^\ell\delta\to 0$ in the $C^\infty$-topology in any compact set of the unit ball, we deduce
    \begin{equation*}
        L\Big(\lapl^{-1}\big(\sum_{\ell\ge 0} T^\ell\delta\big)\Big) = \delta .
    \end{equation*}
    We have successfully built the fundamental solution, it remains only to check that it has the structure described in the statement. Let $u\defeq \lapl^{-1}\big(\sum_{\ell\ge 0} T^\ell\delta\big)$ be the fundamental solution that we have built.
    Now we want to study the algebraic structure of the homogeneous components of $u$. We want to prove that, for any $\ell\in\Z$, the $(\ell-(n-2))$-homogeneous component of $u$ is given by a homogeneous polynomial of degree $3\ell$ divided by $\abs{x}^{2\ell+(n-2)}$ (plus a polynomial multiplied by $\log(\abs{x})$ if the dimension is even) and it is null if $\ell<0$. 
    Equivalently (both in the case $n$ odd and $n$ even), we want to prove that
    \begin{equation}\label{eq:tmp55}
        k+h \ge -(n-2)
        \quad\text{and}\quad 
        h+n\text{ is even} 
        \quad \text{and} \quad 
        h\ge -(2(k+h+n-2)+(n-2)) .
    \end{equation}
    Recalling the action of $\lapl^{-1}$ (see \cref{lem:inverse_lapl}) and the fact that $\lapl^{-1}\delta\in \H^{0, -(n-2)}$, these two properties would follow from proving that, for any $\ell\ge 1$, if $\pi_{k,h}(T^\ell\delta)\not=0$ then (these formulae are obtained by substituting $h$ with $h+2$ in \cref{eq:tmp55})
    \begin{equation*}
        k+h \ge -n
        \quad\text{and}\quad 
        h+n\text{ is even} 
        \quad \text{and} \quad 
        2k+3(h+n)\ge 0 .
    \end{equation*}
    Since these properties follow from $(k,h)\in\Omega\cap\big((0,-n)+\Sigma_\ell\big)$ which is proven in \cref{cor:estimateT_highlevel}, the proof is concluded.
\end{proof}

It remains to prove \cref{thm:precisedenominators}. We need the following preparatory lemma which is a refinement of \cref{lem:estimateT_lowlevel}.
\begin{lemma}\label{lem:lowest_arrow}
    Let $f\defeq p\abs{x}^h\in \H^{k,h}$, where $(k,h)\in\Omega$. Let $\alpha(x)\defeq \sum_{i,j} (\delta_{ij}-A_{ij}(x))x_ix_j$.
    Then, for any $k'\ge 0$ and $h'<h-2$, it holds $\pi_{k',h'}(Tf)=0$.
    Moreover, if $h<0$, we have, for any $k'\ge 0$, 
    \begin{equation}\label{eq:tmp44}
        \pi_{k',h-2}(Tf) 
        = \frac{h\abs{x}^{h-2}}{(2k+h+n)}\, \Pi_{k', 0}(\alpha\cdot p) ,
    \end{equation}
    where the projection $\Pi_{k',0}$ is defined in \cref{thm:harmonicdecomposition}.
    Analogously, for the $\delta$ distribution we have $\pi_{k',h'}(T\delta)=0$ for any $k'\ge 0$ and $h'<-n-2$. Moreover, for any $k'\ge 0$, we have
    \begin{equation}\label{eq:tmp45}
        \pi_{k', -n-2}(T\delta) = \frac{-n c_n}{\abs{x}^{n+2}} \Pi_{k', 0}(\alpha) ,
    \end{equation}
    where the constant $c_n$ appears in the definition of $\lapl^{-1}\delta$ (see \cref{rmk:fundamentalsolution}).
\end{lemma}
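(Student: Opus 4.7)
The plan is to reduce the claim to a direct pointwise computation of $Tf = (\lapl - L)g$ with $g \defeq \lapl^{-1} f$. I decompose $\lapl - L = \sum_{i,j} a_{ij}\partial_{ij} - \sum_i b_i\partial_i - c$, where $a_{ij} \defeq \delta_{ij} - A_{ij}$ vanishes at the origin by our normalization $A(0_{\R^n}) = \id$. By \cref{lem:inverse_lapl}, $g = \lambda\, p\abs{x}^{h+2}$ with $\lambda = 1/((h+2)(2k+h+n))$ when $h \neq -2$, and $g = (2k+n-2)^{-1} p\log\abs{x}$ when $h = -2$.

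For the vanishing statement $\pi_{k',h'}(Tf)=0$ for $h' < h - 2$, I would track the lowest $\abs{x}$-exponent surviving each operation. Starting from $g \in \H^{k,h+2}$, applying $\partial_j$ lands in $\H^{k-1,h+2}\oplus \H^{k+1,h}$ and a second derivative $\partial_i\partial_j$ lands in $\H^{k-2,h+2}\oplus \H^{k,h}\oplus \H^{k+2,h-2}$ by \cref{lem:derivative}. Since $a_{ij}$ has no constant term, \cref{lem:mul_anal} guarantees that multiplication by $a_{ij}$ only preserves or raises the second index. Hence $\sum a_{ij}\partial_{ij}g$ has its $\abs{x}$-exponent bounded below by $h-2$, while $b_i\partial_i g$ and $c\, g$ have exponents bounded below by $h$ and $h+2$ respectively, proving the first claim.

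To obtain the explicit formula I expand pointwise
\begin{equation*}
    \partial_{ij}(p\abs{x}^{h+2}) = (\partial_{ij}p)\abs{x}^{h+2} + (h+2)\big((\partial_j p)x_i + (\partial_i p)x_j + p\delta_{ij}\big)\abs{x}^h + h(h+2)\, p\, x_i x_j\, \abs{x}^{h-2}.
\end{equation*}
After multiplying by $a_{ij}(x)$ and summing over $(i,j)$, only the last summand can yield a component with $\abs{x}$-exponent exactly $h-2$: any polynomial multiplied by $\abs{x}^{h+2}$ or $\abs{x}^h$ decomposes harmonically (\cref{thm:harmonicdecomposition}) into pieces of $\abs{x}$-exponent at least $h+2$ or $h$, respectively. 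Using $\sum_{i,j}a_{ij}(x)x_ix_j = \alpha(x)$ and the cancellation $\lambda h(h+2) = h/(2k+h+n)$, the $\H^{k',h-2}$ projection then evaluates via $\Pi_{k',0}$ to the claimed formula. The logarithmic case $h = -2$ goes through identically: the log term differentiates once to a $\abs{x}^{-2}$ contribution, and the $\abs{x}^{-4}$ coefficient of $\partial_{ij}g$ equals $-2\, p\,x_i x_j/(2k+n-2)$, matching $h/(2k+h+n)$ at $h = -2$.

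For $T\delta$ the same argument applies with $g = c_n \abs{x}^{2-n}$ in place of $\lambda p\abs{x}^{h+2}$: the pointwise expansion $\partial_{ij}\abs{x}^{2-n} = (2-n)\delta_{ij}\abs{x}^{-n} - n(2-n)x_ix_j\abs{x}^{-n-2}$ shows that only the second summand contributes to $\H^{k',-n-2}$ after multiplication by $a_{ij}$ and summation. The conceptual content is the simple observation that a polynomial times $\abs{x}^h$ decomposes only into harmonic pieces of $\abs{x}$-exponent $\geq h$; the only real work is the bookkeeping and noticing the cancellation collapsing $\lambda h(h+2)$ into the clean factor $h/(2k+h+n)$.
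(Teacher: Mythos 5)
Your approach — track the lowest $\abs{x}$-exponent through $\partial_{ij}\circ\lapl^{-1}$ and then multiply by $a_{ij}\defeq\delta_{ij}-A_{ij}$ — coincides in substance with the paper's, which packages the same bookkeeping via the auxiliary projection $S_\alpha$ and a commutative diagram for the composition $\lapl^{-1},\partial_i,\partial_j$. Your pointwise expansion of $\partial_{ij}(p\abs{x}^{h+2})$ is exactly the content of the paper's \eqref{eq:loc43}, and the cancellation reducing the prefactor to $h/(2k+h+n)$ is the same. The vanishing for $h'<h-2$, formula \eqref{eq:tmp44}, and the $h=-2$ logarithmic case are all handled correctly.

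Two issues in the $T\delta$ part, however. First, for $n=2$ you cannot take $g=c_n\abs{x}^{2-n}$: that is the constant $c_2$, not $\lapl^{-1}\delta=c_2\log\abs{x}$. You must run the logarithmic expansion (as you did for $h=-2$), which yields the coefficient $-2c_2$, matching \eqref{eq:tmp45}. Second, for $n\geq 3$ the expansion you wrote gives, after multiplying by $a_{ij}$ and summing, the coefficient $-n(2-n)c_n$ in front of $\abs{x}^{-n-2}\,\Pi_{k',0}(\alpha)$, whereas \eqref{eq:tmp45} states $-nc_n$; these differ by a factor $(2-n)$, and you neither resolve nor flag the discrepancy. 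A direct check at $n=3$ (where $\partial_{ij}\abs{x}^{-1}=-\delta_{ij}\abs{x}^{-3}+3x_ix_j\abs{x}^{-5}$) confirms the coefficient should be $3c_3=-n(2-n)c_n$, not $-3c_3$, so the constant written in the lemma appears to be a misprint that happens to match only the $n=2$ computation. This is harmless downstream — the proof of \cref{thm:precisedenominators} uses only that $\pi_{k',-n-2}(T\delta)\neq 0$ — but a proof that silently arrives at a constant different from the stated one should at least comment on it.
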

\begin{proof}
If $h'<h-2$, then $\pi_{k',h'}(Tf)=0$ follows from \cref{lem:controlG1G2,lem:estimateT_lowlevel}. For the same reasons, $\pi_{k',h'}(T\delta)=0$ if $h'<-n-2$.
We prove only \cref{eq:tmp44} when $h\not=-2$, the case $h=2$ and \cref{eq:tmp45} can be proven analogously.

First of all, if $f\in\H^{k,h}$, we have $\pi_{k',h'}(b_i\partial_i\lapl^{-1}f + c\lapl^{-1}f)=0$ whenever $h'<h$. Such statement follows from \cref{lem:inverse_lapl,lem:mul_anal,lem:derivative}. Therefore we can assume without loss of generality that $c=0$ and $b_i=0$ for all $1\le i\le n$.

Given an homogeneous harmonic polynomial $p\in\H^k$ and $\alpha\in\{1,2,\dots,n\}$, let $S_\alpha(p)$ be defined as (provided that $2k+n-2>0$)
\begin{equation*}
    S_\alpha(p)\defeq \Pi_{k+1,0}(px_\alpha) = p x_\alpha - \frac{1}{2k+n-2}\partial_\alpha p\abs{x}^2 .
\end{equation*}
Notice that $S_\alpha(p)\in \H^{k+1}$, and if $p\not=0$ then $S_\alpha(p)\not=0$ (because $\abs{x}^2$ cannot divide $p$).
Given $p\abs{x}^h\in \H^{k,h}$, we define $S(p\abs{x}^h)=S(p)\abs{x}^h$.
As a byproduct of the proofs of \cref{lem:multiplicationoperator,lem:derivative}, we have (provided that $2k+n-2>0$) that if $f\in\H^{k,h}$ then
\begin{equation}\label{eq:local100}
    \partial_\alpha f - \frac{h}{\abs{x}^2} S_\alpha(f) \in \H^{k-1, h} .
\end{equation}
Therefore, given $(k,h)\in\Omega$ with $h<0$, $h\not=-2$ and $h+n$ even (this implies $k>0$ or $n>2$),
 the action of $\partial_{ij}\circ\lapl^{-1}$ on $\H^{k,h}$ is represented by the following diagram (whose validity follows from \cref{lem:laplacianoperator})
\begin{center}
    \begin{tikzcd}[row sep=tiny, column sep=small]
    \phantom{1em} \ar[rrr, "\lapl^{-1}", dashed, shift right=2ex]
    &&& 
    \phantom{1em} \ar[r, "\partial_i", dashed, shift right=2ex] 
    & 
    \phantom{1em} \ar[r, "\partial_j", dashed, shift right=2ex] 
    & 
    \phantom{1em} \\
    &&&&(k-1, h+2)& \\
    (k,h) \ar[rrr, "\frac{\abs{x}^2}{(h+2)(2k+h+n)}"]
    &&&
    (k, h+2) \ar[ur, squiggly] \ar[dr, "\frac{h+2}{\abs{x}^2}S_i" below]&&(k, h) \\
    &&&&(k+1, h) \ar[ur, squiggly] \ar[dr, "\frac{h}{\abs{x}^2}S_j" below] &  \\
    &&&&& (k+2, h-2) \\
    &&&&&
    \end{tikzcd},
\end{center}
from which we deduce that, if $f\defeq p\abs{x}^h\in\H^{k,h}$, $\pi_{k', h'}(\partial_{ij}\lapl^{-1}f)\not=0$ implies either $h' \ge h$ or $h'=h-2$ and $k'=k+2$. Furthermore we obtain also
\begin{equation}\label{eq:loc43}
    \pi_{k+2,h-2}(\partial_{ij}\lapl^{-1}f) =
    \frac{h\abs{x}^{h-2}}{2k+h+n} \Pi_{k+2,0}(x_ix_j p) .
\end{equation}
To deduce the last formula, we have implicitly used that if $q_1,q_2$ are two polynomials and $q_2$ is $k$-homogeneous, then
\begin{equation*}
    \Pi_{k',0}\big(q_1\,\Pi_{k, 0}(q_2)\big) = \Pi_{k', 0}(q_1\,q_2) .
\end{equation*}
Using again the latter identity, \cref{eq:loc43} implies
\begin{align*}
    \pi_{k', h-2}(Tf) 
    &= \pi_{k',h-2}\Big(\sum_{i,j}(\delta_{ij}-A_{ij})\pi_{k+2,h-2}\big(\partial_{ij}\lapl^{-1}f\big)\Big) \\
    &=
    \frac{h\abs{x}^{h-2}}{2k+h+n}\Pi_{k',0}\Big(\sum_{i,j}(\delta_{ij}-A_{ij}(x))x_ix_j p\Big),
\end{align*}
which coincides with \cref{eq:tmp44}.
\end{proof}

\begin{proof}[Proof of \cref{thm:precisedenominators}]
We consider only the case $\lambda < \infty$ as the case $\lambda=\infty$ is substantially simpler and can be proven by appropriately adapting the arguments.

For any $\ell\ge0$, let $\widetilde \Sigma_\ell$ be defined as (compare with $\Sigma_\ell$ defined in \cref{lem:controlG1G2})
\begin{equation*}
    \widetilde\Sigma_\ell\defeq
    \{(x,y)\in\Z^2:\, x+y\ge\ell,\, y\ge -2\ell,\, 4x+3y\ge 0,\, 2x+\lambda y\ge 0,\, y\text{ is even}\} .
\end{equation*}
Notice that $\widetilde\Sigma_\ell=\widetilde\Sigma_1+\widetilde\Sigma_1+\cdots+\widetilde\Sigma_1$, where the sum is repeated $\ell$ times.

Given a homogeneous polynomial $p\in\H^k$, thanks to the definition of $\lambda$, we have $\Pi_{k',0}(\alpha\cdot p)=0$  if $k'<\lambda + k$ and $\Pi_{\lambda+k,0}(\alpha\cdot p)\not=0$ if $p\not=0$, where the projections $\Pi_{i,j}$ are defined in \cref{thm:harmonicdecomposition}.
Hence, for a nonzero function $f\in\H^{k,h}$, we deduce from \cref{lem:lowest_arrow} that $\pi_{k',h'}(Tf)=0$ whenever $(k',h')\not\in(k,h) + \widetilde\Sigma_1$ and $\pi_{k+\lambda, h-2}(Tf)\not=0$ if $h<0$. For the $\delta$ distribution, we have that $\pi_{k',h'}(T\delta)=0$ whenever $(k',h')\not\in (0,-n)+\widetilde\Sigma_1$ and $\pi_{\lambda,-n-2}(T\delta)\not=0$.

Hence, it is not hard to prove by induction on $\ell\ge 1$ that $\pi_{k,h}(T^\ell\delta)=0$ whenever $(k,h)\not\in(0,-n)+\widetilde\Sigma_\ell$, and $\pi_{\lambda\ell', -n-2\ell'}(T^\ell\delta)\not=0$ if and only if $\ell'=\ell$.

Recalling the definition of the fundamental solution $u(x)=\lapl^{-1}\big(\delta + T\delta + T^2\delta + \cdots\big)$ and the properties of the inverse Laplacian \cref{lem:inverse_lapl}, we deduce (notice that $h=-2\frac{k+h}{\lambda-2}$ is equivalent to $2k+\lambda h=0$)
\begin{equation}\label{eq:tmp551}\begin{aligned}
    \pi_{k, h-(n-2)}(u) = 0 \quad&\text{if $h<-2\frac{k+h}{\lambda-2}$}, \\
    \pi_{k, h-(n-2)}(u) \not=0 \quad&\text{if $h = -2\frac{k+h}{\lambda-2}$}.
\end{aligned}\end{equation}
Since 
\begin{equation*}
    u_\ell(x) = \abs{x}^{n-2}\sum_{k+h=\ell} \pi_{k,h-(n-2)}(u) ,
\end{equation*}
the proof is concluded because \cref{eq:tmp551} is equivalent to the desired statement.
\end{proof}

\printbibliography[heading=bibintoc]
\end{document}